\newenvironment{proof}{\noindent {\em {Proof}}.}{$\square$
\medskip}
\newtheorem{definition}{Definition}[section]
\newtheorem{corollary}{Corollary}[section]
\newtheorem{proposition}{Proposition}[section]
\newtheorem{lm}{Lemma}[section]
\newtheorem{ca}{Case}[section]
\newtheorem{coj}{Conjecture}[section]
\begin{document}
\setlength{\unitlength}{10mm}
\newcommand{\f}{\frac}
\newtheorem{theorem}{Theorem}[section]
\newcommand{\sta}{\stackrel}
\title{\bf OD-Characterization of Some Linear
Groups Over Binary Field and Their Automorphism
Groups\thanks{This work has been supported by {\bf RIFS}.}}
\author{{\sc A. R. Moghaddamfar} \ and  \ {\sc S. Rahbariyan}
\\[0.3cm]
{\em Department of Mathematics,}\\[0.1cm] {\em K. N. Toosi
University of Technology,}\\[0.1cm]
 {\em P. O. Box $16315$-$1618$, Tehran, Iran}\\[0.3cm]
 {\em and} \\[0.2cm]
{\em Research Institute for Fundamental Sciences (RIFS), Tabriz, Iran}\\[0.3cm]
{\em E-mails}: {\tt moghadam@kntu.ac.ir} and {\tt
moghadam@ipm.ir}\\[0.2cm]}
\maketitle
\begin{abstract} \noindent The Gruenberg-Kegel
graph ${\rm GK}(G)=(V_G, E_G)$ of a finite group $G$ is a simple
graph with vertex set  $V_G=\pi(G)$, the set of all primes
dividing the order of $G$, and such that two distinct vertices $p$
and $q$ are joined by an edge, $\{p, q\}\in E_G$, if $G$ contains
an element of order $pq$. The degree ${\rm deg}_G(p)$ of a vertex
$p\in V_G$ is the number of edges incident on $p$. In the case
when $\pi(G)=\{p_1, p_2, \ldots, p_h\}$ with $p_1< p_2< \cdots <
p_h$, we consider the $h$-tuple $D(G)=({\rm deg}_G(p_1), {\rm
deg}_G(p_2),\ldots, {\rm deg}_G(p_h))$, which is called the degree
pattern of $G$. The group $G$ is called $k$-fold
OD-characterizable if there exist exactly $k$ non-isomorphic
groups $H$ satisfying condition $(|H|, D(H))=(|G|, D(G))$.
Especially, a 1-fold OD-characterizable group is simply called
OD-characterizable. In this paper, we first find the degree
pattern of the projevtive special linear groups over binary field
$L_n(2)$ and among other results we prove that the simple groups
$L_{10}(2)$ and $L_{11}(2)$ are OD-characterizable (Theorem
\ref{10-11}). It is also shown that automorphism groups ${\rm
Aut}(L_p(2))$ and ${\rm Aut}(L_{p+1}(2))$, where $2^p-1$ is a
Mersenne prime, are OD-characterizable (Theorem \ref{auto}).
\end{abstract}
\def\thefootnote{ \ }
\footnotetext{{\em AMS subject Classification} 2010: 20D05, 20D06,
20D08.

{\bf Keyword and phrases}: Gruenberg-Kegel graph, degree pattern,
simple group, OD-characterization of a finite group.}
\section{Introduction}
Throughout this paper, all groups considered are {\em finite} and
simple groups are {\em non-abelian}. Given a group $G$, denote by
$\pi_e(G)$ the set of order of all elements in $G$. It is clear
that the set $\pi_e(G)$ is {\em closed} and {\em partially
ordered} by divisibility, hence, it is uniquely determined by
$\mu(G)$, the subset of its  maximal elements. We also denote by
$\pi(n)$ the set of all prime divisors of a positive integer $n$.
For a finite group $G$, we shall write $\pi(G)$ instead of
$\pi(|G|)$.

To every finite group $G$ we associate a graph known as {\it
Gruenberg-Kegel graph} (or {\em prime graph}) denoted by ${\rm
GK}(G)=(V_G, E_G)$. For this graph we have $V_G=\pi(G)$, and for
two distinct vertices $p, q\in V_G$ we have $\{p, q\}\in E_G$ if
and only if $pq\in \pi_e(G)$. When $p$ and $q$ are adjacent
vertices in ${\rm GK}(G)$ we will write $p\sim q$. Denote the
connected components of ${\rm GK}(G)$ by ${\rm
GK}_i(G)=(\pi_i(G), E_i(G))$, $i=1, 2, \ldots, s(G)$, where
$s(G)$ is the number of connected components of ${\rm GK}(G)$. If
$2\in \pi(G)$, then we set $2\in \pi_1(G)$. In the papers
\cite{k} and \cite{w} the connected components of the
Gruenberg-Kegel graph of all non-abelian finite simple groups are
determined. An corrected list of these groups can be found in
\cite{km}.

Recall that a complete graph is a graph in which every pair of
vertices is adjacent. It is worth noting that if $S$ is a simple
group with disconnected prime graph, then all connected
components ${\rm GK}_i(S)$ for $2\leqslant i\leqslant s(S)$ are
complete graphs, for instance, see \cite{suz}.

When the group $G$ has connected components ${\rm GK}_1(G), {\rm
GK}_2(G), \ldots, {\rm GK}_{s(G)}(G)$, $|G|$ can be expressed as
the product of $m_1, m_2, \ldots, m_{s(G)}$, where $m_i$'s are
positive integers with $\pi(m_i)=\pi_i(G)$. We call $m_1,
m_2,\dots, m_{s(G)}$ the {\em order components} of $G$ and we
write $${\rm OC}(G):=\{m_1, m_2, \dots, m_{s(G)}\},$$ the set of
all order components of $G$.

The {\em degree $\deg_G(p)$ of a vertex} $p\in \pi(G)$ is the
number of edges incident on $p$. When there is no ambiguity on
the group $G$, we denote $\deg_G(p)$ simply by $\deg(p)$.
 If $\pi(G)$ consists of the primes $p_1, p_2, \ldots,
p_h$ with $p_1<p_2<\cdots<p_h$, then we define
\[ {\rm D}(G):=\big(\deg_G(p_1), \deg_G(p_2), \ldots, \deg_G(p_h)\big), \]
which is called the {\em degree pattern of $G$}. Moreover, we set
$$ \Omega_n(G):=\{p\in
\pi(G)| \ {\rm deg}_G(p)=n\},$$ for $n=0, 1, 2, \ldots, h-1$.
Clearly,
$$\pi(G)= \bigcup_{n=0}^{h-1}\Omega_n(G).$$ Moreover, since
${\rm deg}_G(p)=0$ if and only if $(\{p\}, \emptyset)$ is a
connected component of ${\rm GK}(G)$, we have $
|\Omega_0(G)|\leqslant s(G)\leqslant 6$ (see \cite{w}). A group
$G$ is called a $C_{p,p}$-group if $p\in \Omega_0(G)$.

Given a finite group $M$, denote by $h_{\rm OD}(M)$ the number of
isomorphism classes of finite groups $G$ such that $|G|=|M|$ and
${\rm D}(G)={\rm D}(M)$. In terms of the function $h_{\rm OD}$,
we have the following definition.
\begin{definition} A finite group $M$ is called {\em $k$-fold {\rm
OD}-characterizabale} if $h_{\rm OD}(M)=k$. Usually, a $1$-fold
OD-characterizabale group is simply called {\em {\rm
OD}-characterizabale}.
\end{definition}

The notion of OD-characterizability of a finite group was first
introduced by the first author and his colleagues in \cite{mzd}.
It is well-known that, according to Cayley's theorem, for each
positive integer $n$ there are only {\em finitely} many
non-isomorphic groups of order $n$ normally denoted by $\nu(n)$.
Hence
$$1\leqslant h_{\rm OD}(G)\leqslant \nu(|G|),$$
for every finite group $G$, and the following result follows
immediately.
\begin{theorem}
Every finite group is $k$-fold {\rm OD}-characterizable for some
natural number $k$.
\end{theorem}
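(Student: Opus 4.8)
The plan is essentially to unwind the definitions, since the decisive inequality $1 \leqslant h_{\rm OD}(M) \leqslant \nu(|M|)$ has already been recorded in the paragraph preceding the statement. Once one knows that $h_{\rm OD}(M)$ is a finite positive integer, setting $k := h_{\rm OD}(M)$ exhibits $M$ as $k$-fold OD-characterizable by the very definition, and the theorem is proved.

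For completeness I would justify the two bounds separately. The lower bound $h_{\rm OD}(M) \geqslant 1$ holds because $M$ itself satisfies the pair of conditions $|M| = |M|$ and ${\rm D}(M) = {\rm D}(M)$, so the isomorphism class of $M$ is always among those counted. For the upper bound, every group $H$ contributing to $h_{\rm OD}(M)$ has order exactly $|M|$; by Cayley's theorem each such $H$ embeds in the symmetric group $S_{|M|}$, and since a finite symmetric group has only finitely many subgroups there are at most $\nu(|M|) < \infty$ isomorphism classes of groups of that order. Imposing the additional requirement ${\rm D}(H) = {\rm D}(M)$ merely restricts attention to a subclass of these, so $h_{\rm OD}(M) \leqslant \nu(|M|)$.

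Combining the two estimates gives a well-defined natural number $k = h_{\rm OD}(M)$ lying in $\{1, 2, \ldots, \nu(|M|)\}$, which is exactly the assertion. I do not anticipate any genuine obstacle here: the entire argument is a direct consequence of the finiteness of $\nu(n)$ for each fixed $n$, and the degree-pattern condition contributes nothing beyond selecting a subset of an already finite collection. The only point deserving mild care is that the quantity $h_{\rm OD}(M)$ counts \emph{isomorphism classes} rather than groups themselves; it is precisely this convention that keeps the total bounded by $\nu(|M|)$ and hence finite.
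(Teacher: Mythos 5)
Your argument is correct and is essentially the same as the paper's: the theorem is presented there as an immediate consequence of the inequality $1\leqslant h_{\rm OD}(G)\leqslant \nu(|G|)$, whose lower bound comes from $G$ counting itself and whose upper bound comes from Cayley's theorem giving only finitely many isomorphism classes of groups of a fixed order. Your filling in of the two bounds, including the remark that the degree-pattern condition only cuts down an already finite collection, matches the intended reasoning exactly.
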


For recent results concerning the simple groups which are
$k$-fold {\rm OD}-characterizable, for $k\geqslant 2$, it was
shown in \cite{akbari-moghadam}, \cite{mz2} and \cite{mzd} that
each of the following pairs $\{K_1, K_2\}$ of groups:

\begin{center}
$\begin{array}{ll} \{{A}_{10}, {\Bbb Z}_3\times J_2\}, &  \\[0.2cm] \{B_3(5), \ C_3(5)\}, & \\[0.2cm]
\{B_m(q), \ C_m(q)\}, &  m=2^f\geqslant 2,  \
|\pi\big((q^m+1)/2\big)|=1,
\  q \ {\rm is \ an \ odd \ prime \ power},\\[0.2cm]
\{B_p(3), \ C_p(3)\}, & |\pi\big((3^p-1)/2\big)|=1, \  p \ {\rm is
\ an \ odd \
prime,}  \\
\end{array}$
\end{center}
satisfy $|K_1|=|K_2|$ and ${\rm D}(K_1)={\rm D}(K_2)$, and $h_{\rm
OD}(K_i)=2$. In general, for simple groups $B_m(q)$ and $C_m(q)$
we have $$\big(|B_m(q)|, \ {\rm D}(B_m(q))\big)=\big(|C_m(q)|, \
{\rm D}(C_m(q))\big),$$ (see {\rm \cite[Proposition 7.5]{vv}}).
Notice that the orthogonal group $B_n(q)$ is isomorphic to the
symplectic group $C_n(q)$ when $q$ is even, and also $B_2(q)\cong
C_2(q)$ for each $q$. Hence, if $B_m(q)$ and $C_m(q)$ are
non-isomorphic groups, then it follows that
$$h_{\rm OD}(B_m(q))=h_{\rm OD}(C_m(q))\geqslant 2.$$

Until recently, we do not know if there exists a non-abelian
finite {\em simple} group which is $k$-fold OD-characterizable for
$k\geqslant 3$. Therefore, the following problem may be of interest.\\[0.2cm]
{\bf Problem 1.} \ {\it Is there a non-abelian finite simple group
$S$ for which $h_{\rm OD}(S)\geqslant 3$ $?$}\vspace{0.2cm}

In this paper, we focus our attention on the OD-characterizability
of projevtive special linear groups over binary field, that is
${\rm PSL}(n, 2)$, and their automorphism groups. We shortly
denote ${\rm PSL}(n, q)$ by $L_n(q)$. Recall that $L_2(2)\cong
{\Bbb S}_3$, $L_3(2)\cong L_2(7)$ and $L_4(2)\cong {\Bbb A}_8$.
Clearly $s(L_2(2))=2$. By \cite{k}, we have $s(L_3(2))=3$,
$s(L_4(2))=2$, and
\[ s(L_n(2))=\left \{ \begin{array}{ll}
1  &
\mbox{ if $n\neq p$, $p+1$;}\\[0.2cm]
  2 & \mbox{ if $n=p$ or $p+1$,}
\end{array}
\right.
\]
where $p\geqslant 5$ is a prime number. More precisely, in the
latter case, when $n=p$ or $p+1$, $L_n(2)$ has two connected
components, one of them is ${\rm GK}_1(L_n(2))$ with
$$ \pi_1(L_p(2))=\pi\Big(2\prod_{i=1}^{p-1}(2^i-1)\Big), \ \ (\mbox{resp.} \
\pi_1(L_{p+1}(2))=\pi\Big(2(2^{p+1}-1)\prod_{i=1}^{p-1}(2^i-1)\Big)),$$
and the other in both cases is ${\rm GK}_2(L_n(2))$ with
$\pi_2=\pi (2^p-1)$, while if $n\neq p, p+1$, then
$\pi_1(L_n(2))=\pi(L_n(2))$. The orders of finite simple groups
under discussion here are:
$$|L_n(2)|=2^{n\choose 2}\prod_{i=2}^{n}(2^i-1).$$
Previously, it was proved that many of projevtive special linear
groups over binary field are OD-characterizable.
\begin{itemize}
\item  It was proved in \cite{amr} that the linear groups $L_p(2)$ and $L_{p+1}(2)$,
for which $|\pi(2^p-1)|=1$,  are OD-characterizable. Note that if
$|\pi(2^p-1)|=1$, then $2^p-1$ is a prime (see \cite[Ch. IX, Lemma
2.7]{hupert}). A list of all known primes $p$ such that $2^p-1$
is also prime (which is called a Mersenne prime) is as follows:
2, 3, 5, 7, 13, 17, 19, 31, 61, 89, 107, 127, 521, 607, 1279,
2203, 2281, 3217, 4253, 4423, 9689, 9941, 11213, 19937, 21701,
23209, 44497, 86243, 110503, 132049, 216091, 756839, 859433,
1257787, 1398269, 2976221, 3021377, 6972593, 13466917, 20996011,
24036583, 25964951, 30402457, 32582657, 37156667, 43112609 (see
\cite{mersenne}). Therefore, the linear groups $L_p(2)$ and
$L_{p+1}(2)$ for these primes $p$ are OD-characterizable.
\item  The OD-characterizability of $L_9(2)$ was established in \cite{khoshravi}.
\end{itemize}

For the values of $|G|$, $s(G)$ and $h_{\rm OD}(G)$ for certain
projective special linear groups over binary field, see Table 1.
\begin{center}
{\bf Table 1.} {\em The value of $h_{\rm OD}(\cdot)$ for some
projective special linear groups over binary field.} \\[0.2cm]
\begin{tabular}{lllcc}
\hline $G$ & $|G|$  & $s(G)$ & $h_{\rm OD}(G)$ & Refs. \\[0.2cm]
\hline \\[-0.2cm]
$L_2(2)\cong {\Bbb S}_3$ & $2\cdot 3$ & 2 & 1 &  \cite{mz}\\[0.2cm]
$L_3(2)\cong L_2(7)$ & $2^3\cdot 3 \cdot 7$ & 3 & 1 &  \cite{amr, zshi}\\[0.2cm]
$L_4(2)\cong A_8$ & $2^6\cdot 3^2 \cdot 5\cdot 7$ & 2 & 1 &  \cite{mz}\\[0.2cm]
$L_5(2)$ & $2^{10}\cdot 3^2 \cdot 5\cdot 7\cdot 31$ & 2 & 1 &  \cite{amr}\\[0.2cm]
$L_6(2)$ & $2^{15}\cdot 3^4 \cdot 5\cdot 7^2\cdot 31$ & 2 & 1 &  \cite{amr}\\[0.2cm]
$L_7(2)$ & $2^{21}\cdot 3^4 \cdot 5\cdot 7^2\cdot 31\cdot 127$ & 2 & 1 &  \cite{amr}\\[0.2cm]
$L_8(2)$ & $2^{28}\cdot 3^5 \cdot 5^2\cdot 7^2\cdot 17\cdot 31\cdot 127$ & 2 & 1 &  \cite{amr}\\[0.2cm]
$L_9(2)$ & $2^{36}\cdot 3^5\cdot 5^2 \cdot 7^3\cdot 17 \cdot
31 \cdot 73 \cdot 127$ & 1 & 1 &  \cite{khoshravi} \\[0.2cm]
$L_{10}(2)$ & $2^{45}\cdot 3^6\cdot 5^2 \cdot 7^3\cdot 11\cdot 17
\cdot
31^2 \cdot 73 \cdot 127$ & 1 & Unknown & -\\[0.2cm]
$L_{11}(2)$ & $2^{55}\cdot 3^6\cdot 5^2 \cdot 7^3\cdot 11\cdot 17
\cdot 23\cdot
31^2 \cdot 73\cdot 89 \cdot 127$ & 2 & Unknown & -\\[0.2cm]
\hline
\end{tabular}
\end{center}

So far, we have not found any natural number $n\geqslant 2$ for
which $h_{\rm OD}(L_n(2))>1$. On this basis, we put forward the
following conjecture.
\begin{coj}
The projective special linear groups $L_n(2)$ for all integers
$n\geqslant 2$ are OD-characterizable.
\end{coj}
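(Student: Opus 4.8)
The plan is to follow the standard two-stage programme for OD-characterization. Stage one is to pin down $D(L_n(2))$ explicitly. Since $L_n(2) = GL_n(2)$, every element has a multiplicative Jordan decomposition into commuting semisimple and unipotent parts, and an odd prime $r$ divides $|L_n(2)|$ exactly when $d_r := {\rm ord}_r(2) \leqslant n$, in which case $r$ already occurs in the order of a semisimple element supported on a single companion block of size $d_r$. Consequently, for two odd primes $r, s$ one has $r \sim s$ in ${\rm GK}(L_n(2))$ iff a semisimple element of order divisible by $rs$ fits in $GL_n(2)$, which happens exactly when $\min\{d_r + d_s,\ {\rm lcm}(d_r, d_s)\} \leqslant n$, corresponding respectively to two separate blocks of sizes $d_r, d_s$ or a single irreducible block of size ${\rm lcm}(d_r, d_s)$ carrying both primes. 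The vertex $2$ is adjacent to an odd $r$ precisely when a semisimple block of size $d_r$ can coexist with a nontrivial unipotent (transvection) part, i.e. when $d_r \leqslant n - 2$. Translating these inequalities into edge counts yields a closed combinatorial description of $\deg_{L_n(2)}(r)$ in terms of the exponents $\{d_r : r \in \pi(L_n(2))\}$, and hence of the whole tuple $D(L_n(2))$.

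Stage two is the reconstruction: given $H$ with $(|H|, D(H)) = (|L_n(2)|, D(L_n(2)))$, prove $H \cong L_n(2)$. For the arithmetically special values $n = p$ and $n = p+1$, where $s(L_n(2)) = 2$ with isolated second component $\pi_2 = \pi(2^p - 1)$, I would invoke the Gruenberg--Kegel structure theorem: because $H$ then also has disconnected prime graph, it is Frobenius, $2$-Frobenius, or admits a normal series $1 \trianglelefteq N \trianglelefteq M \trianglelefteq H$ in which $N$ and $H/M$ are $\pi_1$-groups and $M/N$ is a nonabelian simple group whose prime graph carries the complete component on $\pi_2$. The Frobenius and $2$-Frobenius alternatives are excluded by comparing the resulting forced factorization of $|H|$ against $|L_n(2)|$ and against the prescribed degrees, since a fixed-point-free action cannot reproduce the dense adjacency around the vertex $2$. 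One then matches $M/N$ against the classification of simple groups possessing a component on $\pi_2$, and uses the order equation $|N| \cdot |M/N| \cdot [H:M] = |L_n(2)|$ together with the degree data to force $M/N \cong L_n(2)$ and $N = H/M = 1$.

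The difficulty---and the reason the statement is posed only as a conjecture---is concentrated in the generic range $n \neq p, p+1$, where ${\rm GK}(L_n(2))$ is connected and the Gruenberg--Kegel dichotomy offers no leverage. There one must extract structure from the degree pattern alone: the small primes, above all the vertex $2$ (adjacent to every $r$ with $d_r \leqslant n-2$, hence to all but a handful of primes), carry very large degree, and this ought to bound the solvable radical of $H$ and limit its nonabelian composition factors. Turning this into a clean exclusion of every competing factorization of $|L_n(2)|$ appears to require a fresh input uniform in $n$. A secondary obstacle is the irregular dependence of the graph on $n$: whether $r \sim s$ hinges on the arithmetic of ${\rm lcm}(d_r, d_s)$ and $d_r + d_s$ relative to $n$, so the degrees jump erratically as $n$ grows and no single computation settles all $n$ simultaneously. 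Securing both a uniform structural consequence of the high-degree vertices in the connected case and uniform control of the factorizations of the numbers $2^i - 1$ is precisely what a complete proof of the conjecture would demand.
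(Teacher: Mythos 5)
The statement you were asked to prove is labelled a \emph{conjecture} in the paper, and the paper offers no proof of it: it is left open, with only partial evidence (the cases $n=p$ and $n=p+1$ for $2^p-1$ a Mersenne prime, $n=9$ from earlier work, and the new cases $n=10,11$ proved in Theorem \ref{10-11}). Your submission is likewise not a proof --- you say so yourself --- so there is nothing to certify. What you have written is a programme, and as a programme it is broadly faithful to the machinery the paper actually deploys in the cases it settles: your stage-one adjacency criterion $\min\{d_r+d_s,\ \mathrm{lcm}(d_r,d_s)\}\leqslant n$ is equivalent to the Vasiliev--Vdovin criterion of Lemma \ref{criterion} (since $k\nmid l$ with $k\leqslant l$ forces $\mathrm{lcm}(k,l)\geqslant 2l\geqslant k+l$), and your stage two for $n=p,p+1$ is the Williams/Gruenberg--Kegel analysis via Proposition \ref{GK-W} that the paper carries out in Section 3.2.

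One substantive correction: your claim that in the connected range $n\neq p,p+1$ ``the Gruenberg--Kegel dichotomy offers no leverage'' overstates the obstruction. The paper's key tool for $L_{10}(2)$ --- whose prime graph \emph{is} connected --- is the Vasiliev--Gorshkov theorem (Theorem \ref{thm-1111}), which requires only $t(G)\geqslant 3$ and $t(2,G)\geqslant 2$, not a disconnected graph, and already yields a normal series with an almost simple section; the degree pattern then forces certain primes (here $11$ and $73$) out of the solvable radical via Lemma \ref{inK}, and a Zavarnitsine-type classification of simple groups with order dividing $|L_n(2)|$ (Lemma \ref{43}) finishes the identification. So the connected case is not structurally hopeless; the genuine missing ingredient, which you do identify correctly, is uniformity in $n$: one needs, for every $n$, a pair of low-degree primes that cannot lie in the solvable radical together with a classification of the simple groups whose orders divide $|L_n(2)|$, and no argument uniform in $n$ for either step is currently available. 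That, and not the connectedness per se, is why the statement remains a conjecture.
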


In this paper, we will continue to review research on this subject
and we show the following result which confirms the above
conjecture.
\begin{theorem}\label{10-11}
The projective special linear groups $L_{10}(2)$ and $L_{11}(2)$
are OD-characterizable.
\end{theorem}

It should be mentioned that, in fact, among the finite simple
groups with disconnected Gruenberg-Kegel graph, $L_{11}(2)$ is a
first example of the simple OD-characterizable group $S$ with
$\Omega_0(S)=\emptyset$, whereas for all the simple
OD-characterizable groups $S$ known thus far, the set
$\Omega_0(S)$ is not empty.

We now return to studying the automorphism groups of projective
special linear groups over binary field. It has already been
shown that the automorphism groups: ${\rm Aut}(L_2(2))\cong {\rm
Aut}(\mathbb{S}_3)\cong \mathbb{S}_3$, ${\rm
Aut}(L_3(2))\cong{\rm Aut}(L_2(7))={\rm PGL}(2, 7)$, and ${\rm
Aut}(L_4(2))\cong {\rm Aut}(\mathbb{A}_8)=\mathbb{S}_8$, are
OD-characterizable \cite{amr, mz, zliu}. In Section 3, we will
prove that the automorphism groups ${\rm Aut} (L_{p}(2))$ and
${\rm Aut} (L_{p+1}(2))$, where $2^p-1\geqslant 31$ is a Mersenne
prime, are also OD-characterizable. Combining with the above
results, the following theorem is derived.
\begin{theorem}\label{auto}
Let $2^p-1$ be a Mersenne prime. Then the automorphism groups
${\rm Aut} (L_{p}(2))$ and ${\rm Aut} (L_{p+1}(2))$ are
OD-characterizable.
\end{theorem}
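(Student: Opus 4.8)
The plan is to deduce the OD-characterization of ${\rm Aut}(L_p(2))$ and ${\rm Aut}(L_{p+1}(2))$ from that of the simple groups $L_p(2)$ and $L_{p+1}(2)$, which is already available. First I would fix the structure of the automorphism groups. Over the binary field there are no field automorphisms, and since $\gcd(n,2-1)=1$ there are no diagonal automorphisms; for $n\geqslant 3$ the outer automorphism group is therefore generated by the graph (inverse-transpose) automorphism $\tau$, so ${\rm Aut}(L_n(2))=L_n(2){:}2$ and $|{\rm Aut}(L_n(2))|=2|L_n(2)|$. The only new prime contribution is an extra factor $2$, which is already a vertex of ${\rm GK}(L_n(2))$, so the vertex sets of the prime graphs of $L_n(2)$ and ${\rm Aut}(L_n(2))$ coincide, and passing to the index-$2$ overgroup can only add edges.

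Next I would compute the degree patterns, writing $r=2^p-1$ (a prime by hypothesis). In $L_p(2)$ and in $L_{p+1}(2)$ the prime $r$ divides the order exactly once and is the unique vertex of the second prime-graph component, i.e. $r\in\Omega_0$. The decisive point is that $r$ stays isolated in ${\rm GK}({\rm Aut}(L_n(2)))$: an element of order $r$ generates a Singer-type torus whose centralizer in $L_n(2)$ is the torus itself, of odd order $r$, and the graph automorphism does not centralize such an element (it carries its class to that of the inverse), so its centralizer in ${\rm Aut}(L_n(2))$ remains this torus of odd order. Consequently no element of order $2r$, nor of order $qr$ for any other prime $q$, exists, and ${\rm GK}({\rm Aut}(L_n(2)))$ remains disconnected with $\{r\}$ an isolated component. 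The remaining degrees are then read off from those of the simple group together with a check of which new adjacencies among the $\pi_1$-primes (if any) arise in the coset $L_n(2)\tau$, yielding the explicit $D({\rm Aut}(L_n(2)))$.

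Now let $G$ be any finite group with $(|G|,D(G))=(|{\rm Aut}(L_n(2))|,D({\rm Aut}(L_n(2))))$. The isolated vertex $r$ forces ${\rm GK}(G)$ to be disconnected, so by the Gruenberg--Kegel theorem $G$ is Frobenius, $2$-Frobenius, or admits a normal series $1\trianglelefteq H\trianglelefteq K\trianglelefteq G$ with $K/H$ non-abelian simple, $H$ and $G/K$ being $\pi_1$-groups, and $r$ an order component of $K/H$. The Frobenius and $2$-Frobenius possibilities are solvable; a Frobenius kernel is nilpotent and hence would force the component $\pi_1$ to induce a complete subgraph, contradicting the non-edges present in $\pi_1(L_n(2))$, and the $2$-Frobenius case is excluded by the analogous adjacency and order obstructions. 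Thus $G$ falls into the third case.

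The core of the argument is to identify $K/H$. Here $r=2^p-1$ is an odd order component of the simple group $K/H$, it divides $|K/H|$ exactly once, and $|K/H|$ divides $|G|=2|L_n(2)|$. Scanning the classification of finite simple groups with disconnected prime graph (the lists in \cite{k,w,km}) and comparing orders --- using that $r$ is a large Mersenne prime and that the $2$-part of $|L_n(2)|$ equals $2^{\binom{n}{2}}$ --- pins down $K/H\cong L_n(2)$; this is the step I expect to be the main obstacle, since it amounts to rerunning the order analysis behind the OD-characterization of $L_n(2)$ itself, now with one extra available factor of $2$, and checking that this extra $2$ admits no new simple group. Granting $K/H\cong L_n(2)$, comparison of $|G|=2|L_n(2)|$ with $|K/H|=|L_n(2)|$ gives $|H|\,|G/K|=2$; as $H\trianglelefteq G$ is a $\pi_1$-group and $G$ has the simple section $L_n(2)$, a short argument yields $H=1$ and $|G/K|=2$, so $G$ is an extension of $L_n(2)$ by $\mathbb{Z}_2$. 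Since $L_n(2)$ has trivial centre, $C_G(L_n(2))$ meets $L_n(2)$ trivially, and the standard dichotomy gives either $G=L_n(2)\times\mathbb{Z}_2$ or $G\hookrightarrow{\rm Aut}(L_n(2))$, the latter forcing $G\cong{\rm Aut}(L_n(2))$ by order. In the direct product the product of an element of order $r$ with the central involution has order $2r$, so there $r$ is not isolated, contradicting $D(G)$. Therefore $G\cong{\rm Aut}(L_n(2))$, which proves $h_{\rm OD}({\rm Aut}(L_n(2)))=1$ for $n=p$ and $n=p+1$.
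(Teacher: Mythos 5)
Your overall strategy coincides with the paper's: use the isolated vertex $2^p-1$ to force a disconnected prime graph, invoke the Gruenberg--Kegel/Williams trichotomy, discard the Frobenius and $2$-Frobenius cases by the adjacency obstruction inside $\pi_1$, identify the simple section, and finish by analysing the order-$2$ kernel/quotient. The genuine gap is in the step you yourself flag as ``the main obstacle'': the identification of $K/H$. You assert that ``scanning the classification \ldots and comparing orders'' pins down $K/H\cong L_n(2)$, but order comparison together with the odd order component $2^p-1$ does \emph{not} suffice. For instance, $P\cong L_2(2^p-1)$ has order $2^p(2^{p-1}-1)(2^p-1)$, which divides $2|L_p(2)|$, has $2^p-1$ as an odd order component, and contains $2^p-1$ to the first power only --- so it survives every filter you propose. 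The paper eliminates this case (and several other Lie-type families) by a structural argument, not arithmetic: since $|{\rm Aut}(P):P|=2$, a primitive prime divisor $r$ of $2^{p-2}-1$ is forced into the nilpotent kernel $K$; its Sylow $r$-subgroup $R$ is then normal in $G$, a Sylow $(2^p-1)$-subgroup acts on $R$ fixed-point-freely because $2^p-1$ is isolated, and the resulting Frobenius group $RQ$ yields $2^p-1\leqslant |R|-1\leqslant 2^{p-2}-1$, a contradiction. The remaining cases need the auxiliary Lemma \ref{m} (on when $(2^k-1)^2$ divides $|{\rm Aut}(L)|$) and $s$-part estimates via Lemma \ref{exp-element}; none of this is a mere ``rerun'' of the simple-group analysis with an extra factor of $2$, and it constitutes essentially the whole of the paper's proof.

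A second, smaller gap is in your endgame. From $|H|\cdot|G/K|=2$ you claim ``a short argument yields $H=1$,'' but the case $|H|=2$ is precisely a central extension of $L_n(2)$ by $\mathbb{Z}_2$, and your dichotomy (direct product versus embedding into ${\rm Aut}(L_n(2))$) only applies once $L_n(2)$ is known to sit inside $G$ as a normal subgroup. The paper disposes of the $|K|=2$ case directly and uniformly: the central involution together with a preimage of an element of order $2^p-1$ produces an element of order $2(2^p-1)$, contradicting the isolation of $2^p-1$ in ${\rm GK}(G)$ whether or not the extension splits. Your preliminary computations (structure of ${\rm Aut}(L_n(2))$, isolation of $2^p-1$ in its prime graph, exclusion of the Frobenius and $2$-Frobenius configurations) are sound and agree with the paper.
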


Again we have not found any natural number $n\geqslant 2$ for
which $h_{\rm OD}({\rm Aut}(L_n(2)))\geqslant 2$. Hence, we put
forward the following conjecture.
\begin{coj}
The automorphism groups ${\rm Aut} (L_{n}(2))$ for all integers
$n\geqslant 2$ are OD-characterizable.
\end{coj}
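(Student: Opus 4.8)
The plan is to first pin down the isomorphism type of $\mathrm{Aut}(L_n(2))$ and then to show, one congruence class of $n$ at a time, that no other group shares its order and degree pattern. Over the binary field $L_n(2)=\mathrm{SL}_n(2)=\mathrm{GL}_n(2)$, and since $\gcd(n,q-1)=1$ and $\mathbb{F}_2$ is a prime field there are neither diagonal nor field automorphisms; hence $\mathrm{Out}(L_n(2))=\mathbb{Z}_2$ is generated by the inverse-transpose graph automorphism $\tau$ for every $n\geqslant 3$, so $\mathrm{Aut}(L_n(2))=L_n(2){:}2$ with
\[
|\mathrm{Aut}(L_n(2))|=2^{\binom{n}{2}+1}\prod_{i=2}^{n}(2^i-1),
\]
while $\mathrm{Aut}(L_2(2))\cong\mathbb{S}_3$, as well as $\mathrm{Aut}(L_3(2))$ and $\mathrm{Aut}(L_4(2))$, are already known to be OD-characterizable and supply the base of any induction. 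The first genuine task is to compute $D(\mathrm{Aut}(L_n(2)))$. Since $\pi(L_n(2){:}2)=\pi(L_n(2))$, the outer coset can only \emph{add} edges to $\mathrm{GK}(L_n(2))$: an element $g\tau$ satisfies $(g\tau)^2=g\,\tau(g)\in L_n(2)$, so its order is $2\cdot\mathrm{ord}\bigl(g\,\tau(g)\bigr)$, and the new adjacencies $2\sim r$ are controlled by which odd orders occur among the twisted products $g\,\tau(g)$.

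Next I would feed the pair $(|G|,D(G))=\bigl(|\mathrm{Aut}(L_n(2))|,D(\mathrm{Aut}(L_n(2)))\bigr)$ into the usual socle-recognition machine. When $n=p$ or $n=p+1$ the graph $\mathrm{GK}(L_n(2))$ is disconnected with second component $\pi_2=\pi(2^p-1)$, and one checks that $\mathrm{Aut}(L_n(2))$ retains an isolated or nearly isolated prime; the Gruenberg-Kegel structure theorem then forces $G$ to be Frobenius, $2$-Frobenius, or an extension with simple socle. A counting argument using the exact power of a large prime $r\in\pi(2^p-1)$ in $|G|$ rules out the solvable alternatives and identifies the socle as $L_n(2)$, after which $|G|=2|L_n(2)|$ together with $\mathrm{Out}(L_n(2))=\mathbb{Z}_2$ leaves only $G\cong L_n(2){:}2$. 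This is precisely the route available through Theorem~\ref{auto} when $2^p-1$ is a Mersenne prime, and the same template should work whenever $\pi_2$ consists of a controllable set of primes.

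The hard part is the vast majority of indices, namely $n\neq p,p+1$, for which $\mathrm{GK}(L_n(2))$---and hence $\mathrm{GK}(\mathrm{Aut}(L_n(2)))$---is connected, so $\Omega_0=\emptyset$ and the Gruenberg-Kegel trichotomy is unavailable. Here I would replace the isolated-prime argument by an induction on $n$: a Zsigmondy prime $r_n$ of $2^n-1$ plays the role of a distinguished vertex whose degree in the pattern, together with the exact $r_n$-part of $|G|$, should localize the socle and permit a reduction from rank $n$ to a smaller rank already handled. The two obstacles I expect to dominate are (i) establishing a \emph{uniform} formula for $D(L_n(2){:}2)$, which requires controlling the orders of all twisted elements $g\,\tau(g)$ and thus the full spectrum of the coset $L_n(2)\tau$ for every $n$; and (ii) proving that the degree data alone excludes every other almost simple candidate, and every group with non-simple socle, of the same order---a statement whose truth rests on delicate number-theoretic facts governing the adjacency $p\sim q$ in $\mathrm{GK}(L_n(2))$ that are not yet available in closed form. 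It is exactly these two gaps that keep the assertion at the level of a conjecture rather than a theorem.
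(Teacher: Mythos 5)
What you have written is not a proof, and it could not be one: the statement you were asked to prove is stated in the paper as a \emph{conjecture}, and the paper itself proves only special cases of it. Where your proposal is concrete, it simply retraces those known cases: the identification $\mathrm{Out}(L_n(2))=\mathbb{Z}_2$, the reduction via the Gruenberg--Kegel trichotomy (Proposition \ref{GK-W}), the exclusion of Frobenius and $2$-Frobenius groups, and the socle identification are exactly the route of the paper's Theorem \ref{aut-3} (i.e.\ Theorem \ref{auto}), which you even cite, and that theorem covers only $n=p$ or $p+1$ with $2^p-1$ a Mersenne prime, together with the classical cases $n=2,3,4$. Everything beyond that in your text is a plan with the decisive steps missing, as you yourself concede in the final sentences.

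The gaps are concrete and fatal to the proposal as a proof. First, you never compute $D(\mathrm{Aut}(L_n(2)))$ for general $n$: the observation that elements of the outer coset have order $2\cdot\mathrm{ord}\bigl(g\,\tau(g)\bigr)$ only says that new edges must involve the vertex $2$; determining \emph{which} odd orders arise among the twisted products $g\,\tau(g)$ (equivalently, the spectrum of the coset $L_n(2)\tau$, governed by the centralizers $C_L(\sigma)\cong PSO^{\pm}$ or $PSp$ as in the paper) is precisely what is not available in closed form, and without it the hypothesis ``$D(G)=D(\mathrm{Aut}(L_n(2)))$'' cannot even be exploited. Second, for $n\neq p,p+1$ the prime graph is connected, so Proposition \ref{GK-W} is unavailable; the only substitute in the paper, Theorem \ref{thm-1111}, requires $t(G)\geqslant 3$ and $t(2,G)\geqslant 2$, which must be extracted from the (unknown) degree pattern, and even then it yields only an almost-simple quotient that your proposed ``induction on $n$ via a Zsigmondy prime $r_n$'' does not identify: no argument is given excluding the many other simple groups whose order divides $|\mathrm{Aut}(L_n(2))|$ (compare the length of Table 3 and of the case analysis in Lemma \ref{isomorL}). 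Third, even in the disconnected cases your proposal silently assumes more than is known: when $n=p$ or $p+1$ but $2^p-1$ is \emph{composite}, the second component $\pi(2^p-1)$ is a clique rather than an isolated vertex, and the paper's key steps (Lemma \ref{independent-3}, the Frobenius-complement bound $2^p-1\leqslant |R|-1$, and the order-component analysis with $m_2=2^p-1$ prime) all use primality of $2^p-1$ in an essential way; ``the same template should work'' is not a proof. So the proposal correctly maps the terrain but proves nothing beyond what the paper already establishes, which is why the statement remains a conjecture.
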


We conclude the introduction with notation to be used in the rest
of this paper. Throughout, by $\mathbb{A}_n$ and $\mathbb{S}_n$,
we denote the alternating and the symmetric groups on $n$ letters,
respectively. We denote by ${\rm Syl}_p(G)$ the set of all Sylow
$p$-subgroups of $G$, where $p\in \pi(G)$. Moreover $G_p$ denotes
a Sylow $p$-subgroup of $G$ for $p\in \pi(G)$. If $H$ is a
subgroup of $G$, then $N_G(H)$ is the normalizer of $H$ in $G$.
Given some positive integer $n$ and some prime $p$, denote by
$n_p$ the $p$-part of $n$, that is the largest power of $p$
diving $n$. We denote by $H:K$ (resp. $H\cdot K$) a split
extension (resp. a non-split extension) of a normal subgroup $H$
by another subgroup $K$. Note that, split extensions are the same
as semi-direct products. All further unexplained notation is
standard and refers to \cite{atlas}, for instance.
\section{Preliminaries}
Given a graph $\Gamma=(V, E)$, a set of vertices $I\subseteq V$
is said to be an independent set of $\Gamma$ if no two vertices
in $I$ are adjacent in $\Gamma$. The independence number of
$\Gamma$, denoted by  $\alpha(\Gamma)$, is the maximum cardinality
of an independent set among all independent sets of $\Gamma$. The
following classical bound holds for every graph $\Gamma$ and is
due to Caro and Wei.
\begin{lm}[\cite{caro}, \cite{wei}]\label{lcw}  Let $\Gamma=(V, E)$ be
a graph with independence number $\alpha(\Gamma)$. Then
$$\alpha(\Gamma)\geqslant \sum_{v\in V}\frac{1}{1+d(v)},$$
where $d(v)$ is the degree of the vertex $v$ in $\Gamma$.
\end{lm}

Given a group $G$, for convenience, we will denote $\alpha({\rm
GK}(G))$ as $t(G)$. Moreover, for a vertex $r\in \pi(G)$, let
$t(r,G)$ denote the maximal number of vertices in independent sets
of ${\rm GK}(G)$ containing $r$.
\begin{theorem}[Theorem 1, \cite{vg}]\label{thm-1111}
Let $G$ be a finite group with $t(G)\geq 3$ and $t(2,G)\geq 2$.
Then the following hold:
\begin{itemize}
\item[{$(1)$}]
There exists a finite non-abelian simple group $S$ such that
$S\leq {\bar G}=G/K\leq {\rm Aut}(S)$ for the maximal normal
solvable subgroup $K$ of $G$.
\item[{$(2)$}] For every independent subset $\rho$ of $\pi(G)$ with
$|\rho|\geq 3$ at most one prime in $\rho$ divides the product
$|K|\cdot |\bar{G}/S|$. In particular, $t(S)\geq t(G)-1$.
\item[{$(3)$}] One of the following holds:

$(3.1)$ every prime $r\in \pi(G)$ non-adjacent to $2$ in ${\rm
GK}(G)$ does not divide the product $|K|\cdot |\bar{G}/S|$; in
particular, $t(2, S)\geq t(2,G)$;

$(3.2)$ there exists a prime $r\in \pi(K)$ non-adjacent to $2$ in
${\rm GK}(G)$; in which case $t(G)=3$, $t(2,G)=2$, and $S\cong
\mathbb{A}_7$ or $L_2(q)$ for some odd $q$.
\end{itemize}
\end{theorem}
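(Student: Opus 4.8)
The plan is to reduce to the almost simple case via the generalized Fitting subgroup, and then to read off statements $(2)$ and $(3)$ by tracking how independent sets of ${\rm GK}(G)$ distribute among the three natural layers of $G$: the solvable radical $K$, the socle of $\bar G=G/K$, and the outer part $\bar G/{\rm Soc}(\bar G)$. First I would take $K$ to be the solvable radical (the maximal normal solvable subgroup), so that $\bar G=G/K$ has trivial solvable radical. Then $F^*(\bar G)={\rm Soc}(\bar G)=S_1\times\cdots\times S_m$ is a direct product of non-abelian simple groups and, since $C_{\bar G}({\rm Soc}(\bar G))=1$, conjugation embeds $\bar G$ into ${\rm Aut}({\rm Soc}(\bar G))$. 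Statement $(1)$ is then exactly the claim that $m=1$, and once it holds we write $S=S_1$.

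To prove $m=1$ I would exploit two elementary facts together with the hypotheses. In a direct product, commuting elements from distinct factors of coprime orders multiply to an element whose order is their product; hence any prime dividing $S_i$ is adjacent in ${\rm GK}({\rm Soc}(\bar G))$ to any prime dividing $S_j$ for $j\neq i$. Since every non-abelian simple group has even order (and, by Burnside, at least three prime divisors), the prime $2$ divides every factor, so if $m\geq 2$ then every prime of ${\rm Soc}(\bar G)$ becomes adjacent to $2$ by pairing an element of order $2$ in one factor with an element of order $r$ in another. The hypothesis $t(2,G)\geq 2$ supplies a prime $r$ non-adjacent to $2$ in ${\rm GK}(G)$, and $t(G)\geq 3$ supplies an independent set of size $3$; confronting these with the near-complete adjacency forced by a multi-factor socle yields a contradiction, so $m=1$. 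The delicate technical point is that adjacency must be transferred between ${\rm GK}(G)$, ${\rm GK}(\bar G)$, and ${\rm GK}({\rm Soc}(\bar G))$ across the solvable kernel $K$; this requires a coprime-action lemma (a prime acting on a solvable group, via Hall subgroups and Frobenius-type arguments, either fixes a subgroup of complementary order, producing an edge, or is forced into a Frobenius configuration) rather than a naive pullback of element orders.

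For statement $(2)$, given an independent subset $\rho$ with $|\rho|\geq 3$, I would argue that at most one of its primes can divide $|K|\cdot|\bar G/S|$. The governing principle is again that primes sitting in the solvable radical or in the (very small) outer automorphism group of a simple group tend to be mutually adjacent and adjacent to primes of $S$: a solvable normal section acted on by two independent primes must, by Hall's theorem and coprime action, produce an element realizing their product unless the action is Frobenius, and two distinct primes cannot simultaneously act Frobenius-like against the same large independent set. Hence two primes of $\rho$ dividing $|K|\cdot|\bar G/S|$ would contradict independence. Deleting the at most one such prime shows that $\rho$ meets $\pi(S)$ in an independent set of size $\geq|\rho|-1$, giving $t(S)\geq t(G)-1$.

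Finally, statement $(3)$ is a dichotomy on where a prime non-adjacent to $2$ can live. If no prime non-adjacent to $2$ divides $|K|\cdot|\bar G/S|$, then every such prime lies in $\pi(S)$ with the same non-neighbour relation, whence $t(2,S)\geq t(2,G)$, which is $(3.1)$. The alternative $(3.2)$, where some $r\in\pi(K)$ is non-adjacent to $2$, is the main obstacle: here the Gruenberg--Kegel trichotomy (Frobenius, $2$-Frobenius, or simple-type extension) applied to the relevant section, together with the extreme tightness it forces, pins $t(G)=3$ and $t(2,G)=2$, and one must then classify which simple $S$ can sit inside such a configuration. This last classification is where the full strength of the classification of finite simple groups and the Vasiliev--Vdovin adjacency criterion for prime graphs of simple groups (as in \cite{vv}) is indispensable, and checking the list of simple groups for compatibility with a solvable prime non-adjacent to $2$ is what singles out $S\cong\mathbb{A}_7$ or $S\cong L_2(q)$ with $q$ odd. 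I expect the bookkeeping in this final case analysis, rather than any single conceptual step, to be the hardest part of the argument.
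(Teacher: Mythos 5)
The paper itself offers no proof of this statement: it is quoted verbatim as Theorem 1 of \cite{vg} (Vasiliev--Gorshkov), so your proposal can only be judged against that source's argument, and against it your sketch has the right skeleton (solvable radical $K$, socle of $\bar G=G/K$, embedding $\bar G\hookrightarrow {\rm Aut}({\rm Soc}(\bar G))$, Hall-subgroup/coprime-action/Frobenius arguments for part $(2)$) but two genuine gaps. First, your proof of $(1)$ asserts that a multi-factor socle ``yields a contradiction'' with $t(G)\geq 3$ and $t(2,G)\geq 2$, but the contradiction does not materialize the way you indicate: if ${\rm Soc}(\bar G)=S_1\times\cdots\times S_m$ with $m\geq 2$, then indeed every prime of $\pi({\rm Soc}(\bar G))$ is adjacent to $2$ via cross-factor commuting elements, yet the prime $r$ non-adjacent to $2$ supplied by $t(2,G)\geq 2$ is then merely forced to divide $|K|\cdot|\bar G/{\rm Soc}(\bar G)|$ --- which is not absurd; it is precisely the configuration that conclusion $(3.2)$ must accommodate rather than refute. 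Ruling out $m\geq 2$ in that situation requires the same heavy machinery as $(3.2)$ itself (see below), together with separate arguments for primes in the factor-permuting part (such elements centralize diagonal involutions) and in $\prod{\rm Out}(S_i)$ (Schreier's conjecture plus coprime action); none of this is carried out, and the naive version of your argument simply stops short of case $(3.2)$.

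Second, for $(3.2)$ both tools you name are the wrong ones. The Gruenberg--Kegel (Williams) trichotomy applies only to groups with \emph{disconnected} prime graph, whereas under the hypotheses $t(G)\geq 3$, $t(2,G)\geq 2$ the graph ${\rm GK}(G)$ may well be connected --- extending recognition arguments to connected graphs is the entire point of \cite{vg} --- so it cannot be invoked for ``the relevant section.'' And the list $\mathbb{A}_7$, $L_2(q)$ with $q$ odd is not obtained by a CFSG case-check against the adjacency criterion of \cite{vv}; it comes from Sylow $2$-structure. Concretely: if $r\in\pi(K)$ and $r\nsim 2$, take $R\in{\rm Syl}_r(K)$; the Frattini argument gives $G=KN_G(R)$, so a Sylow $2$-subgroup $T$ of $N_G(R)$ maps onto a Sylow $2$-subgroup of $\bar G$; non-adjacency of $2$ and $r$ forces $T$ to act fixed-point-freely on $R$, so $RT$ is Frobenius and $T$, a $2$-group which is a Frobenius complement, is cyclic or generalized quaternion; hence the Sylow $2$-subgroups of $S$ are cyclic, generalized quaternion, or dihedral, and Burnside's transfer theorem, the Brauer--Suzuki theorem, and the Gorenstein--Walter theorem then yield exactly $S\cong\mathbb{A}_7$ or $L_2(q)$, $q$ odd (this list is the unmistakable signature of Gorenstein--Walter), after which the equalities $t(G)=3$ and $t(2,G)=2$ are extracted from the resulting tight structure. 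Without this fixed-point-free/Frobenius-complement mechanism, your proposed ``classification of which simple $S$ can sit inside such a configuration'' has no finite criterion to check and cannot produce the stated conclusions.
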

\begin{lm}[Lemma 8(1), \cite{Gerech-2}]\label{exp-element}
Let $q>1$ be an integer, $m$ be a natural number, and $p$ be an
odd prime. If $p$ divides $q-1$, then $(q^m-1)_p=m_p\cdot
(q-1)_p.$
\end{lm}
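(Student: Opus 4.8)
The plan is to recognize this statement as the classical Lifting-the-Exponent lemma specialized to the case where one of the two bases equals $1$. Writing $v_p(n)$ for the exponent of $p$ in the factorization of $n$ (so that $n_p=p^{v_p(n)}$), the asserted identity $(q^m-1)_p=m_p\cdot (q-1)_p$ is equivalent to the additive identity $v_p(q^m-1)=v_p(q-1)+v_p(m)$ under the hypotheses $p\mid q-1$ and $p$ odd. I would establish this in three stages.

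First I would dispose of the case $p\nmid m$. Factoring $q^m-1=(q-1)(q^{m-1}+q^{m-2}+\cdots+q+1)$ and using $q\equiv 1\pmod p$, every summand in the second factor is congruent to $1$ modulo $p$, so the sum is congruent to $m\pmod p$ and is therefore coprime to $p$. Hence $v_p(q^m-1)=v_p(q-1)$, which matches the formula since $v_p(m)=0$.

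Second --- and this is the crux --- I would prove the base identity $v_p(q^p-1)=v_p(q-1)+1$. Writing $q=1+kp^s$ with $s=v_p(q-1)\geq 1$ and $p\nmid k$, I would expand $q^p-1=\sum_{j=1}^{p}\binom{p}{j}k^j p^{js}$ by the binomial theorem and compare $p$-adic valuations term by term. The $j=1$ term contributes valuation exactly $s+1$; for $2\leq j\leq p-1$ one has $v_p\binom{p}{j}=1$, giving valuation $1+js\geq 1+2s>s+1$; and the $j=p$ term has valuation $ps>s+1$. This is precisely where the hypothesis that $p$ is odd is indispensable: it guarantees $v_p\binom{p}{2}=1$, whereas for $p=2$ the term $\binom{2}{2}k^2p^{2s}$ would collide in valuation with the leading term and the identity would fail. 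Since the minimum valuation is attained uniquely by the leading term, $v_p(q^p-1)=s+1=v_p(q-1)+1$.

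Finally I would assemble the general case. Writing $m=p^t m'$ with $p\nmid m'$, the first stage gives $v_p(q^{m'}-1)=v_p(q-1)$; setting $Q=q^{m'}$ we still have $Q\equiv 1\pmod p$, and iterating the base identity $t$ times along $Q,Q^p,Q^{p^2},\ldots,Q^{p^t}$ yields $v_p(Q^{p^t}-1)=v_p(Q-1)+t$. Combining gives $v_p(q^m-1)=v_p(q-1)+t=v_p(q-1)+v_p(m)$, which is the desired identity in the form $(q^m-1)_p=(q-1)_p\cdot m_p$. The only genuine obstacle is the clean bookkeeping of binomial valuations in the base case; everything else is elementary factoring and induction.
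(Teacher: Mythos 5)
Your proof is correct and complete: the reduction to $p\nmid m$ via the geometric-sum factorization, the base case $v_p(q^p-1)=v_p(q-1)+1$ via the binomial expansion with $v_p\binom{p}{j}=1$ for $1\leqslant j\leqslant p-1$, and the iteration along $Q, Q^p,\ldots,Q^{p^t}$ all check out, and you correctly pinpoint where the oddness of $p$ is used (the $j=p$ and $j=2$ terms would collide with the leading term in valuation when $p=2$ and $s=1$). There is, however, nothing in the paper to compare against: the statement is imported verbatim as Lemma 8(1) of \cite{Gerech-2} and no proof is given here, so your argument --- the standard lifting-the-exponent computation --- simply supplies the missing justification.
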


Given positive integers $a\geqslant 2$ and $n$, we say that a
prime $p$ is a primitive prime divisor of $a^n-1$ if $p|a^n-1$
and $p\nmid a^k-1$ for $1\leqslant k<n$. We denote by ${\rm
ppd}(a^n-1)$ the set (depending on $a$ and $n$) of all primitive
prime divisors of $a^n-1$. For example, we have ${\rm
ppd}(13^{11}-1)=\{23, 419, 859, 18041\}$. We recall that, by
Zsigmondy's theorem \cite{zsigmondy} which is given below, the
set ${\rm ppd}(a^n-1)$ is non-empty if $n\neq 2, 6$.
\begin{theorem}[Zsigmondy's Theorem]\label{zsig}
Let $a$, $b$ and $n$ be positive integers such that $(a, b)=1$.
Then there exists a prime $p$ with the following properties:
\begin{itemize}
\item $p$ divides $a^n-b^n$,
\item $p$ does not divide $a^k-b^k$ for all $k<n$,
\end{itemize}
\noindent  with the following exceptions: $a=2$; $b=1$; $n=6$ and
$a+b=2^k$; $n=2$.
\end{theorem}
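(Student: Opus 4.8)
The plan is to give the classical cyclotomic-polynomial proof. Assume without loss of generality that $a>b\geqslant 1$ and work with the homogenized cyclotomic polynomials $\Phi_m(a,b)=b^{\varphi(m)}\Phi_m(a/b)$, where $\Phi_m$ is the $m$-th cyclotomic polynomial; these are positive integers satisfying the factorization $a^n-b^n=\prod_{d\mid n}\Phi_d(a,b)$. Since $(a,b)=1$, for any prime $p\nmid ab$ the element $ab^{-1}$ has a well-defined multiplicative order $d=\mathrm{ord}_p(ab^{-1})$ modulo $p$, and $p\mid a^k-b^k$ if and only if $d\mid k$. Thus a prime $p$ is a \emph{primitive} prime divisor of $a^n-b^n$ in the sense of the theorem precisely when $\mathrm{ord}_p(ab^{-1})=n$, and the whole problem reduces to producing a prime dividing $a^n-b^n$ whose order is exactly $n$.

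First I would prove the two standard lemmas controlling which primes divide $\Phi_n(a,b)$: (i) if $p\mid\Phi_n(a,b)$ and $p\nmid n$, then $\mathrm{ord}_p(ab^{-1})=n$, so $p$ is a primitive prime divisor; (ii) if $p\mid\Phi_n(a,b)$ and $p\mid n$, then $p$ is the largest prime divisor of $n$ and $p^2\nmid\Phi_n(a,b)$. Both are established by writing $n=p^s m$ with $p\nmid m$, comparing $\Phi_n(a,b)$ with $\Phi_m(a,b)$, and applying a lifting-the-exponent estimate of the $p$-adic valuation as in Lemma \ref{exp-element}. Together these facts show that if $a^n-b^n$ has no primitive prime divisor then every prime factor of $\Phi_n(a,b)$ divides $n$, which by (ii) forces $\Phi_n(a,b)$ to equal either $1$ or the largest prime $q$ dividing $n$.

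The remaining and decisive step is an archimedean lower bound on $\Phi_n(a,b)$. Writing $\Phi_n(a,b)=\prod_{\zeta}(a-\zeta b)$ over the primitive $n$-th roots of unity $\zeta$ and using $|a-\zeta b|\geqslant a-b\geqslant 1$, refined for the nonreal roots via $|a-\zeta b|^2=a^2-2ab\cos\theta+b^2>(a-b)^2$, one obtains a bound of the shape $\Phi_n(a,b)>(a-b)^{\varphi(n)}$ that grows with $n$. Comparing this with the trivial estimate $q\leqslant n$ rules out a primitive-prime-free $n$ for all but finitely many triples $(a,b,n)$, namely those for which $a-b$ and $n$ are both small; since $\Phi_n(a,b)>q$ contradicts $\Phi_n(a,b)\in\{1,q\}$, a primitive prime divisor must exist in all the remaining cases.

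The main obstacle is exactly this final comparison together with the hand-checking of the small cases, which is where the genuine exceptions live. The bound $\Phi_n(a,b)>(a-b)^{\varphi(n)}$ degenerates when $a-b=1$ (equivalently $b=1$), so there one must supplement it with a sharper estimate on $\prod_{\zeta}|a-\zeta b|$. I would then treat $n=1$ and $n=2$ separately: for $n=2$ the primitive part of $a^2-b^2=(a-b)(a+b)$ is governed by the odd prime factors of $a+b$, so the failure is exactly $a+b=2^k$, which is one listed exception. Finally I would verify by direct computation the sporadic failure $(a,b,n)=(2,1,6)$, where $2^6-1=63=3^2\cdot 7$ and $\mathrm{ord}_7(2)=3$, $\mathrm{ord}_3(2)=2$ leave no factor of order $6$. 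Confirming that outside these cases the inequality $\Phi_n(a,b)>q$ always holds completes the proof.
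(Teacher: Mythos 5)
You should first be aware that the paper does not prove this statement at all: Theorem \ref{zsig} is quoted as a classical result with a citation to Zsigmondy's original paper \cite{zsigmondy}, and is used in the sequel only as a black box guaranteeing ${\rm ppd}(2^n-1)\neq\emptyset$. So the comparison here is between your sketch and the classical argument you are reconstructing. Your skeleton is the standard and correct one: the factorization $a^n-b^n=\prod_{d\mid n}\Phi_d(a,b)$, the two lemmas (a prime divisor of $\Phi_n(a,b)$ coprime to $n$ is primitive; a prime divisor of $\Phi_n(a,b)$ dividing $n$ is the largest prime $q$ of $n$ and divides $\Phi_n(a,b)$ to the first power, for $n\geqslant 3$), the resulting reduction to $\Phi_n(a,b)\in\{1,q\}$ in the absence of a primitive prime divisor, and the archimedean contradiction $\Phi_n(a,b)>q$. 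The paper's Lemma \ref{exp-element} is indeed the lifting-the-exponent input needed for the second lemma.

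There is, however, a genuine gap, and it sits exactly at the step you yourself call decisive. Your claim that the bound $\Phi_n(a,b)>(a-b)^{\varphi(n)}$ degenerates ``when $a-b=1$ (equivalently $b=1$)'' rests on a false equivalence: $a-b=1$ holds for the infinite family $(a,b)=(3,2),(4,3),(5,4),\ldots$, not only for $(2,1)$. For every pair in this family the bound yields only $\Phi_n(a,b)>1$, which rules out $\Phi_n(a,b)=1$ but in no way contradicts $\Phi_n(a,b)=q$; so the entire family $a=b+1$ requires the ``sharper estimate'' that you mention but never supply, and that estimate is the actual heart of Zsigmondy's theorem. One workable form is obtained by pairing conjugate roots: writing $\theta_\zeta$ for the argument of $\zeta$,
\begin{equation*}
\Phi_n(a,b)^2=\prod_{\zeta}\bigl((a-b)^2+2ab(1-\cos\theta_\zeta)\bigr)\geqslant (2ab)^{\varphi(n)}\prod_{\zeta}(1-\cos\theta_\zeta)=(ab)^{\varphi(n)}\,\Phi_n(1)^2,
\end{equation*}
so that $\Phi_n(a,b)\geqslant (ab)^{\varphi(n)/2}\Phi_n(1)$. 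Combined with $\varphi(n)\geqslant q-1$, this disposes of every pair with $b\geqslant 2$ (where $ab\geqslant 6$), and leaves only $(a,b)=(2,1)$, for which finitely many $n$ (those with $\varphi(n)\leqslant 5$) must be checked by hand; this is precisely where the exception $n=6$, $\Phi_6(2,1)=3=q$, emerges. Because you mislabel the degenerate case as $b=1$, your plan as written would hand-check only $(2,1,n)$ and declare the remaining cases finished, so it genuinely fails for, e.g., $(a,b)=(3,2)$. Until an estimate of the above kind is proved and the case analysis is keyed to $a-b=1$ rather than to $b=1$, the proposal is not a proof.
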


Primitive prime divisors have been applied in Finite Group Theory
(see \cite{pra, vv}, for example). In fact, the order of any
finite simple group of Lie type $S$ of rank $n$ over a field
${\rm GF}(q)$ is equal to
$$|S|=\frac{1}{d}q^N(q^{m_1}\pm 1)(q^{m_2}\pm 1)\cdots(q^{m_n}\pm 1),$$
(see 9.4.10 and 14.3.1 in \cite{carter}). Therefore any prime
divisor $r$ of $|S|$ distinct from the characteristic $p$ is a
primitive prime divisor of $q^m-1$, for some natural $m$. In
particular, if $S=L_n(q)$, with $q=p^f$, then we have
$$|S|=\frac{1}{(n, q-1)}q^{n\choose 2}(q^2-1)(q^3-1)\cdots (q^{n}-1).$$
Now, it is easy to see that
$$\pi(S)\setminus \{p\}=\bigcup_{i=2}^n {\rm ppd}(q^i-1).$$

The following lemma (which is an immediate corollary of
\cite[Propositions 2.1, 3.1 (1)]{vv}) gives the adjacency
criterion for two prime divisors in the prime graph associated
with a projective special linear groups $L_n(2)$.

\begin{lm}\label{criterion}
Let $L$ be the projective special linear group $L_n(2)$,  with
$n\geqslant 3$. Let $r, s \in \pi(L)\setminus \{2\}$ with $r\in
{\rm ppd}(2^k-1)$ and $s\in {\rm ppd}(2^l-1)$ and assume that
$2\leqslant k\leqslant l$. Then
\begin{itemize}
\item [{$(1)$}]  $r$ and $2$ are adjacent if and only if $k\leqslant n-2$;
\item [{$(2)$}]
$r$ and $s$ are adjacent if and only if $k+l\leqslant n$ or $l$ is
divisible by $k$.
\end{itemize}
In particular, every two prime divisors of $2^m-1$, for a fixed
natural number $m\leqslant n$, are adjacent in ${\rm GK}(L)$.
\end{lm}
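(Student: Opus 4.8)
The plan is to exploit the single most useful feature of the binary field: since $\gcd(n,2-1)=1$, the groups $L_n(2)$, $\mathrm{SL}_n(2)$ and $\mathrm{GL}_n(2)$ all coincide, so there is no center, no determinant constraint, and none of the parity and $\gcd(n,q-1)$ corrections that appear in the general adjacency criteria of \cite[Propositions 2.1, 3.1(1)]{vv}. Thus I would work directly inside $L=\mathrm{GL}_n(2)$ acting on $V=\mathbb{F}_2^n$ and translate adjacency into the language of semisimple elements and maximal tori, which is exactly what those two propositions do for general $q$ before the corrections are applied. For a prime $r\neq 2$ in $\pi(L)$, the hypothesis $r\in\mathrm{ppd}(2^k-1)$ means precisely that the multiplicative order of $2$ modulo $r$ equals $k$, equivalently that $r\mid 2^d-1$ iff $k\mid d$; I would record this arithmetic fact first, since it is the bridge between the group theory and the divisibility conditions in the statement.

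For part $(1)$ I would argue through centralizers. An element $x$ of order $r$ acts on $V$ with each nontrivial $\mathbb{F}_2[x]$-constituent irreducible of dimension $k$ (because $\mathbb{F}_2(\zeta_r)=\mathbb{F}_{2^k}$), so taking $x$ to be a single such block on a $k$-dimensional subspace with trivial action on a complement of dimension $n-k$ gives $C_L(x)\supseteq \mathbb{F}_{2^k}^{*}\times \mathrm{GL}_{n-k}(2)$. An element of order $2r$ exists iff some such $x$ has an involution in its centralizer, and $\mathrm{GL}_{n-k}(2)$ contains an involution iff $n-k\geqslant 2$. For the converse, when $k\geqslant n-1$ no second $k$-block fits (two blocks would require $2k\geqslant 2(n-1)\geqslant n$ dimensions), so the only option is the single block, whose centralizer $\mathbb{F}_{2^k}^{*}\times \mathrm{GL}_{n-k}(2)$ has odd order; hence there is no element of order $2r$. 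This yields $r\sim 2$ iff $k\leqslant n-2$.

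For part $(2)$ I would use maximal tori. Every element of order $rs$ is semisimple and therefore lies in a maximal torus, and conversely any maximal torus $T$ is abelian, so $rs\in\pi_e(L)$ iff some maximal torus has order divisible by both $r$ and $s$. The maximal tori of $\mathrm{GL}_n(2)$ are indexed by partitions $n=d_1+\cdots+d_m$ with $|T|=\prod_i(2^{d_i}-1)$, so $r\mid |T|$ iff $k\mid d_i$ for some $i$ and $s\mid |T|$ iff $l\mid d_j$ for some $j$. If $k\mid l$ a single part $d_i=l\leqslant n$ carries both primes; if instead two distinct parts are used, the minimal admissible sum is $k+l$, so a suitable torus exists when $k+l\leqslant n$. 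Conversely, if $k\nmid l$ and $k+l>n$, then $\mathrm{lcm}(k,l)\geqslant 2l\geqslant k+l>n$ rules out a single part divisible by both (such a part would be a multiple of $\mathrm{lcm}(k,l)$, yet no part can exceed $n$), while $k+l>n$ rules out two distinct parts; so no torus works and $r\not\sim s$. Combining the two directions gives $r\sim s$ iff $k\mid l$ or $k+l\leqslant n$. Finally, for the ``in particular'' clause, if $r$ and $s$ both divide $2^m-1$ with $m\leqslant n$ then $k\mid m$ and $l\mid m$, so the single part $d_1=m$ produces a torus of order divisible by $2^m-1$, hence by both $r$ and $s$, and adjacency follows at once.

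The main obstacle is not the combinatorics but justifying the two structural facts on which everything rests: that a semisimple element always lies in a maximal torus and that the maximal tori of $\mathrm{GL}_n(2)$ have exactly the partition-indexed orders $\prod_i(2^{d_i}-1)$, together with the centralizer description used in part $(1)$. For general $q$ these come packaged with delicate $\gcd(n,q-1)$ and $2$-adic corrections, and the genuine content of invoking \cite[Propositions 2.1, 3.1(1)]{vv} is the verification that at $q=2$ all of those corrections collapse, leaving precisely the clean criteria stated above.
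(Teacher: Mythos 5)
Your proof is correct, but it takes a genuinely different route from the paper: the paper offers no argument at all for this lemma, presenting it as an immediate specialization of the general adjacency criteria in \cite[Propositions 2.1, 3.1(1)]{vv} to the case $q=2$, where (as you correctly observe) the $\gcd(n,q-1)$ and parity corrections all collapse. What you supply instead is a self-contained proof from the $\mathbb{F}_2[x]$-module structure of the natural module: the centralizer computation $C_L(x)=\mathbb{F}_{2^k}^{*}\times \mathrm{GL}_{n-k}(2)$ for part $(1)$ (where the ``only if'' direction correctly uses that $2k\geqslant 2(n-1)>n$ forces a single nontrivial block, hence an odd-order centralizer), and the partition-indexed abelian subgroups of order $\prod_i(2^{d_i}-1)$ for part $(2)$, with the combinatorial dichotomy $\mathrm{lcm}(k,l)\geqslant 2l>n$ versus $d_i+d_j\geqslant k+l>n$ handling the converse. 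This buys independence from the heavy machinery of \cite{vv} at the cost of having to justify the structural inputs yourself; on that score, note that for the containment of a semisimple element in a ``maximal torus'' you do not actually need the general theory (which would require connectedness of semisimple centralizers): the isotypic decomposition of $V$ under $\langle x\rangle$ already places $x$ inside the abelian subgroup $\prod_j \mathbb{F}_{2^{e_j}}^{*}$ directly, which is the cleanest way to close the one gap you flag as ``the main obstacle.'' Both approaches yield the same criterion; the paper's is shorter but opaque, yours is longer but transparent and elementary.
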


The next result which completely determines the degree of all
vertices in the Gruenberg-Kegel graph ${\rm GK}(L_n(2))$, is a
simple consequence of Lemma \ref{criterion}.

\begin{corollary}\label{coro-1}
Let $L$ be the projective special linear group $L_n(2)$ with
$n\geqslant 3$. Let $r\in \pi(L)$ be an odd prime and $r\in {\rm
ppd}(2^k-1)$. Then the following hold:
\begin{itemize}
\item [{$(a)$}] $\deg_L(2)=|\pi(L)|-|{\rm
ppd}(2^{n-1}-1)\cup {\rm ppd}(2^n-1)|-1$. In particular, $t(2,
L)\geq 2$.
\item [{$(b)$}] If $k=n$ or $n-1$, then
$\deg_L(r)=|\pi(2^k-1)|-1$.

\item [{$(c)$}] If $k\neq n, n-1$, then
$$\deg_L(r)=\left\{\begin{array}{lll} |\bigcup\limits_{i=2}^{n-k}{\rm
ppd}(2^i-1)| +|{\rm
ppd}(2^{[\frac{n}{k}]k}-1)| & & k\leqslant n/2,\\[0.5cm]
|\pi(2^k-1)\cup\bigcup\limits_{i=2}^{n-k}{\rm
ppd}(2^i-1)| & & k>n/2.\\
\end{array}\right.
$$ \end{itemize}
\end{corollary}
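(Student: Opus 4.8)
The plan is to read off every degree directly from the adjacency criterion of Lemma \ref{criterion}, leaning on two structural facts throughout. The first is the partition of the odd part of the spectrum, $\pi(L)\setminus\{2\}=\bigcup_{i=2}^{n}\mathrm{ppd}(2^i-1)$, into pairwise disjoint primitive-prime-divisor classes indexed by $i$. The second is that $\pi(2^k-1)=\bigcup_{d\mid k}\mathrm{ppd}(2^d-1)$ and, by the final (``in particular'') clause of Lemma \ref{criterion}, these prime divisors form a clique in $\mathrm{GK}(L)$. With this in hand, computing $\deg_L(r)$ reduces to deciding, for each index $l$ with $2\le l\le n$, whether a prime of $\mathrm{ppd}(2^l-1)$ is joined to $r$, summing $|\mathrm{ppd}(2^l-1)|$ over the admissible $l$, subtracting the single vertex $r$ itself, and adding $1$ when $2$ is a neighbour.

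For part $(a)$ I would count the non-neighbours of $2$. By Lemma \ref{criterion}$(1)$, an odd prime $r\in\mathrm{ppd}(2^k-1)$ fails to be adjacent to $2$ exactly when $k\in\{n-1,n\}$, so the non-neighbours are precisely $\mathrm{ppd}(2^{n-1}-1)\cup\mathrm{ppd}(2^n-1)$, whence $\deg_L(2)=|\pi(L)|-1-|\mathrm{ppd}(2^{n-1}-1)\cup\mathrm{ppd}(2^n-1)|$. For $t(2,L)\ge 2$ it suffices to exhibit one odd prime non-adjacent to $2$; this union is non-empty for every $n\ge 3$ because, by Theorem \ref{zsig}, $\mathrm{ppd}(2^m-1)=\emptyset$ only for $m\in\{1,6\}$, and $n-1$ and $n$ cannot both lie in $\{1,6\}$.

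For part $(b)$, when $k\in\{n-1,n\}$ the prime $r$ is already non-adjacent to $2$, so only the odd neighbours matter. Ordering the two indices and applying Lemma \ref{criterion}$(2)$: when $k=n$ the sum condition $l+n\le n$ is vacuous, so $r$ is joined to the class $\mathrm{ppd}(2^l-1)$ iff $l\mid n$; when $k=n-1$ one checks that the single index $l=n$ fails (since $(n-1)\nmid n$ for $n\ge 3$) while every $l\mid(n-1)$ succeeds. In both cases the neighbourhood of $r$ is exactly $\pi(2^k-1)\setminus\{r\}$, giving $\deg_L(r)=|\pi(2^k-1)|-1$.

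Part $(c)$ is the computational heart, and I would split $2\le k\le n-2$ into $k\le n/2$ and $k>n/2$. In both, $r\sim 2$ by Lemma \ref{criterion}$(1)$, contributing a $+1$ that cancels the $-1$ from excluding $r$. For $l\le k$ the relevant condition is $l+k\le n$ or $l\mid k$, and for $l>k$ it is $k+l\le n$ or $k\mid l$. If $k\le n/2$, then every $l\le k$ satisfies $l+k\le 2k\le n$, every $l$ with $k<l\le n-k$ satisfies $k+l\le n$, and among $l>n-k$ the only multiple of $k$ in the length-$k$ interval $(n-k,n]$ is $[n/k]k$; since $[n/k]\ge 2$ this exceeds $k$ and, as $[n/k]k>n-k$, it is disjoint from the block $\bigcup_{i=2}^{n-k}\mathrm{ppd}(2^i-1)$, producing the stated sum. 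If $k>n/2$, then no $l>k$ qualifies ($2k>n$ defeats both $k+l\le n$ and $k\mid l$), while for $l\le k$ adjacency holds iff $l\le n-k$ or $l\mid k$, giving $\pi(2^k-1)\cup\bigcup_{i=2}^{n-k}\mathrm{ppd}(2^i-1)$. The main obstacle is exactly this bookkeeping in the case $k\le n/2$: correctly isolating the one surviving class above the contiguous block $i=2,\dots,n-k$, namely the largest multiple $[n/k]k$ of $k$ not exceeding $n$, and confirming it is disjoint from that block while the $\pm 1$ contributions of the vertices $2$ and $r$ cancel.
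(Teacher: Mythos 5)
Your proposal is correct and follows essentially the same route as the paper: every claim is read off from the adjacency criterion of Lemma \ref{criterion}, with part $(a)$ counting the non-neighbours of $2$ (indices $n-1$ and $n$), part $(b)$ reducing the neighbourhood of $r$ to $\pi(2^k-1)\setminus\{r\}$ via the divisibility condition, and part $(c)$ splitting on $k\leqslant n/2$ versus $k>n/2$. The only differences are ones of detail: the paper dismisses $(c)$ as ``immediate'' while you carry out the bookkeeping (correctly, including the isolation of the single class ${\rm ppd}(2^{[\frac{n}{k}]k}-1)$ above the contiguous block and the cancellation of the $\pm 1$ contributions of $2$ and $r$), and in $(a)$ you justify $t(2,L)\geqslant 2$ by non-emptiness of ${\rm ppd}(2^{n-1}-1)\cup{\rm ppd}(2^n-1)$ via Zsigmondy's exceptional cases, which suffices, whereas the paper argues via coprimality of $2^{n-1}-1$ and $2^n-1$.
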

\begin{proof}
Recall that, the order of $L$ is equal to
$$|L|=2^{n\choose 2}(2^2-1)(2^3-1)\cdots(2^{n-1}-1)(2^n-1).$$
Therefore any odd prime divisor $r$ of $|L|$ is a primitive
divisor of $2^m-1$, for some natural number $m\leqslant n$.

$(a)$ By Lemma \ref{criterion} $(1)$, we have $2\sim r$ if and
only if $k\leqslant n-2$. Therefore, we obtain
$$\deg_L(2)=|\pi(L)|-|{\rm
ppd}(2^{n-1}-1)\cup {\rm ppd}(2^n-1)|-1.$$  In addition, since
$$(2^{n-1}-1, 2^{n}-1)=2^{(n-1, n)}-1=1,$$ and by Theorem
\ref{zsig}, we get $$|{\rm ppd}(2^{n-1}-1)\cup {\rm
ppd}(2^n-1)|\geqslant 2.$$ Therefore, we obtain
$\deg_L(2)\leqslant |\pi(L)|-3$, which forces $t(2,L)\geqslant
2$, as required.

$(b)$ If $k=n$ or $n-1$, then by Lemma \ref{criterion} $(1)$,
$2\nsim r$, and if $s\in \pi(L)\setminus \{2, r\}$ with $s\in
{\rm ppd}(2^l-1)$, then by Lemma \ref{criterion} $(2)$, $s\sim r$
if and only if $l$ divides $k$. But then $2^l-1$ divides $2^k-1$,
and so $s\in \pi(2^k-1)$. Finally, in both cases, we obtain
$\deg_L(r)=|\pi(2^k-1)|-1$.

$(c)$ The conclusion follows immediately from Lemma
\ref{criterion}.
\end{proof}

We are now able to compute the degree pattern of simple group
$L_n(2)$, for a fixed $n$.

\begin{center}
{\bf Table 2.} {\em The degree pattern of some linear groups
$L_n(2)$.}\\[0.2cm] $ \begin{array}{|l|l|}
\hline
L_n(2)  & D(L_n(2))\\
\hline
L_2(2) & (0, 0)\\
L_3(2) & (0, 0, 0)\\
L_4(2) & (1, 2, 1, 0)\\
L_5(2) & (2, 3, 1, 2, 0)\\
L_6(2) & (3, 3, 2, 2, 0)\\
L_7(2) & (4, 4, 3, 3, 2, 0)\\
L_8(2) & (4, 5, 4, 4, 2, 3, 0)\\
L_9(2) & (5, 6, 5, 5, 2, 4, 1, 2)\\
L_{10}(2) & (6, 7, 5, 6, 2, 3, 5, 1, 3)\\
L_{11}(2) & (7, 8, 6, 7, 2, 4, 1, 5, 3, 1, 4)\\
L_{12}(2) & (8, 9, 7, 8, 3, 3, 4, 1, 6, 3, 1, 5)\\
L_{13}(2) & (10, 11, 8, 9, 4, 3, 5, 3, 7, 4, 3, 5, 0)\\
L_{14}(2) & (11, 12, 9, 11, 5, 4, 5, 4, 8, 2, 5, 4, 6, 0)\\
L_{15}(2) & (12, 13, 11, 12, 5, 4, 6, 5, 9, 2, 5, 5, 7, 2, 2)\\
L_{16}(2) & (13, 14, 12, 13, 5, 4, 7, 6, 11, 3, 6, 6, 8, 2, 3, 3)\\
L_{17}(2) & (14, 15, 13, 14, 6, 5, 8, 6, 12, 4, 7, 6, 9, 4, 3, 4, 0)\\
L_{18}(2) & (15, 16, 14, 15, 7, 5, 9, 3, 7, 13, 5, 8, 7, 11, 4, 4, 5,  0)\\
L_{19}(2) & (16, 17, 15, 16, 8, 6, 11, 3, 8, 14, 6, 9, 8, 12, 5, 5, 5, 2, 0)\\
L_{20}(2) & (17, 18, 16, 17, 9, 7, 12, 4, 9, 15, 4, 6, 11, 9, 13, 5, 5, 6, 3, 0)\\
\hline
\end{array}$
\end{center}
It may be finally worth noting that $L_n(2)\hookrightarrow
L_{n+1}(2)$, which implies that:
\begin{itemize}
\item If $n\neq 5$, then $\pi(L_n(2))\subsetneqq \pi(L_{n+1}(2))$ and $\pi_e(L_n(2))\subsetneqq
\pi_e(L_{n+1}(2))$. Moreover, we have
$\pi(L_5(2))=\pi(L_{6}(2))$, while $\pi_e(L_5(2))\subsetneqq
\pi_e(L_{6}(2))$.

\item The Gruenberg-Kegel graph ${\rm GK}(L_n(2))$ is a subgraph of  ${\rm GK}(L_{n+1}(2))$,
\item  If $p\in \pi(L_n(2))$, then $\deg_{L_n(2)}(p)\leqslant
\deg_{L_{n+1}(2)}(p)$.
\end{itemize}

The following lemma (which is taken from \cite[Lemma 8]{silvia})
shows that none of the sets of ``generalized nonnegative matrices"
which we mentioned in Sections 1 and 2 is a convex set.

\begin{lm}[Lemma 8, \cite{silvia}]\label{l-independence} Let $G$ be a
group. If $t(G)\geqslant 3$, then $G$ is non-solvable.
\end{lm}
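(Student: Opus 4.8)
The plan is to prove the contrapositive: if $G$ is solvable, then $t(G)\leqslant 2$. So suppose, toward a contradiction, that $G$ is solvable yet $t(G)\geqslant 3$. Then there exist three pairwise non-adjacent vertices $p,q,r\in\pi(G)$ in ${\rm GK}(G)$, which means precisely that $G$ has no element whose order is any of $pq$, $pr$, or $qr$. First I would strip away the irrelevant primes by invoking Hall's theorem: since $G$ is solvable it possesses a Hall $\{p,q,r\}$-subgroup $H$, which is again solvable with $\pi(H)=\{p,q,r\}$. Because $\pi_e(H)\subseteq\pi_e(G)$, the three primes stay pairwise non-adjacent in ${\rm GK}(H)$, so ${\rm GK}(H)$ is just three isolated vertices and hence $s(H)=3$. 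This reduces everything to the single claim that a solvable group cannot have a prime graph with three connected components.

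The key input is then the Gruenberg--Kegel structure theorem for groups with disconnected prime graph (see \cite{w}, with the corrected list in \cite{km}): a finite group with disconnected ${\rm GK}(G)$ is either a Frobenius group, a $2$-Frobenius group, or a group possessing a non-abelian simple composition factor sandwiched in a prescribed way between a nilpotent $\pi_1$-kernel and a $\pi_1$-quotient. Since $H$ is solvable, the third possibility is excluded, so $H$ is Frobenius or $2$-Frobenius. For a Frobenius group $H=K\rtimes C$ the two components of ${\rm GK}(H)$ are exactly $\pi(K)$ and $\pi(C)$, and for a $2$-Frobenius group the vertex set likewise splits into precisely two components; in either case $s(H)=2$. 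This contradicts $s(H)=3$, and the contradiction yields the lemma.

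The step I expect to carry the real weight is the structural dichotomy itself, namely that a disconnected prime graph of a solvable group forces the Frobenius or $2$-Frobenius shape with exactly two components; the Hall reduction and the bookkeeping of components are routine. If one prefers a self-contained argument avoiding the full Gruenberg--Kegel machinery, I would instead analyse the reduced group $H$ through its Fitting subgroup $F=F(H)$: nilpotence of $F$ combined with the absence of elements of order $pq$, $pr$, $qr$ forces $F$ to be a $t$-group for a single prime $t$, after which $C_H(F)\leqslant F$ makes the remaining two primes act without nontrivial fixed points on a suitable chief factor of $F$. The delicate point on this alternative route is to show that the two ``outer'' primes cannot both act fixed-point-freely while simultaneously remaining non-adjacent to each other; this is exactly where a coprime-action/Frobenius-complement argument (for instance, Thompson's theorem on fixed-point-free automorphisms) is required, and it is the genuine obstacle of the self-contained approach.
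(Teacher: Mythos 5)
Your argument is correct. Note that the paper does not actually prove this lemma; it is imported verbatim as Lemma 8 of \cite{silvia}, so there is no in-paper proof to compare against. Your proof is the standard one (and essentially the one in the cited source): the Hall $\{p,q,r\}$-subgroup reduction is exactly what is needed, since three pairwise non-adjacent primes in ${\rm GK}(G)$ do not by themselves make ${\rm GK}(G)$ disconnected, whereas they do force $s(H)=3$ for the Hall subgroup $H$; and the contradiction with the Gruenberg--Kegel dichotomy is available verbatim from the paper's own Propositions \ref{GK-W}, \ref{frob} and \ref{2frob}, which give $s=2$ for solvable groups with disconnected prime graph (case (c) being excluded by solvability). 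The alternative ``self-contained'' route via the Fitting subgroup that you sketch at the end is not needed and, as you yourself note, is the only place where a genuine difficulty would arise; the first argument is complete as it stands.
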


\begin{lm}\label{r-result} Let $p$ be an odd prime and $L\in \{L_p(2), L_{p+1}(2)\}$. Suppose $G$ is a finite group which
satisfies the conditions $|G|=|L|$ and $D(G)=D(L)$. Then there
hold.
\begin{itemize}
\item [{$(a)$}] There exist three primes in $\pi(G)$ pairwise non-adjacent in ${\rm GK}(G)$,
that is $t(G)\geqslant 3$. In particular, $G$ is a non-solvable
group.

\item [{$(b)$}] There exists an odd prime in $\pi(G)$ which is not adjacent
to the prime $2$ in ${\rm GK}(G)$; that is $t(2,G)\geqslant 2$.

\item [{$(c)$}] There exists a finite non-abelian simple group $S$ such that
$S\leq G/K\leq {\rm Aut}(S)$ for the maximal normal solvable
subgroup $K$ of $G$. Furthermore, $t(S)\geqslant t(G)-1$.
\end{itemize}
\end{lm}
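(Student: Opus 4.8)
The plan is to read off from the hypotheses that $\pi(G)=\pi(L)$ (since $|G|=|L|$) and that $\deg_G(p)=\deg_L(p)$ for every $p\in\pi(G)$ (since $D(G)=D(L)$), and then to transport to $G$ everything we know about the degrees of $L$. Part $(b)$ is then immediate: by Corollary~\ref{coro-1}$(a)$ we have $\deg_L(2)\leqslant|\pi(L)|-3$, so $\deg_G(2)=\deg_L(2)<|\pi(G)|-1$, which means that the vertex $2$ is non-adjacent to at least one (in fact at least two) odd primes of $\pi(G)$ in ${\rm GK}(G)$. Any such prime together with $2$ is an independent set of size $2$, so $t(2,G)\geqslant 2$.

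For part $(a)$ I would use the Caro--Wei bound (Lemma~\ref{lcw}) applied to ${\rm GK}(G)$, which depends only on the degree sequence and is therefore unchanged when we replace $\deg_G$ by $\deg_L$:
$$t(G)\ \geqslant\ \sum_{p\in\pi(G)}\frac{1}{1+\deg_G(p)}\ =\ \sum_{p\in\pi(L)}\frac{1}{1+\deg_L(p)}.$$
I would split this sum along the two connected components of ${\rm GK}(L)$. The component $\pi_2=\pi(2^p-1)$ is complete (Lemma~\ref{criterion}), and by Corollary~\ref{coro-1}$(b)$ each of its $|\pi_2|$ vertices has degree $|\pi_2|-1$; hence the $\pi_2$-part contributes exactly $|\pi_2|\cdot\frac{1}{|\pi_2|}=1$.

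It then remains to show that the $\pi_1$-part exceeds $1$. Writing $m=|\pi_1|$, every $p\in\pi_1$ has all of its neighbours inside $\pi_1$, so $\deg_L(p)\leqslant m-1$ and $\frac{1}{1+\deg_L(p)}\geqslant\frac1m$. Using Lemma~\ref{criterion}$(2)$ I would exhibit two \emph{non-adjacent} primes $s,t\in\pi_1$: taking $s\in{\rm ppd}(2^{\,l}-1)$ and $t\in{\rm ppd}(2^{\,k}-1)$ with $2\leqslant k<l\leqslant n-1$, $k+l>n$ and $k\nmid l$, the choice $l=p-1$, $k=p-2$ works for both $L_p(2)$ and $L_{p+1}(2)$, both primitive-prime-divisor sets being non-empty by Zsigmondy's Theorem~\ref{zsig} (the finitely many exceptional cases, such as $p=7$ where ${\rm ppd}(2^6-1)=\emptyset$, are settled directly by the choice $k=4$, $l=5$). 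Since $s$ and $t$ are non-adjacent, each has degree at most $m-2$, so each contributes at least $\frac{1}{m-1}$, whence
$$\sum_{p\in\pi_1}\frac{1}{1+\deg_L(p)}\ \geqslant\ \frac{m-2}{m}+\frac{2}{m-1}\ =\ 1+\frac{2}{m(m-1)}\ >\ 1.$$
Combining the two parts gives $t(G)>2$, so $t(G)\geqslant 3$, and Lemma~\ref{l-independence} then forces $G$ to be non-solvable.

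Finally, part $(c)$ is a direct application of Theorem~\ref{thm-1111}: by $(a)$ and $(b)$ the group $G$ satisfies $t(G)\geqslant 3$ and $t(2,G)\geqslant 2$, so conclusion $(1)$ of that theorem produces a finite non-abelian simple group $S$ with $S\leqslant G/K\leqslant{\rm Aut}(S)$ for the maximal normal solvable subgroup $K$, while conclusion $(2)$ gives $t(S)\geqslant t(G)-1$. The one place needing genuine care — the main obstacle — is the estimate in $(a)$: one must guarantee uniformly that two non-adjacent primes exist in $\pi_1$, which means checking the Zsigmondy exceptions and the smallest primes $p$ by hand, and one must confirm that $\pi_2$ really is a complete component of uniform degree so that its Caro--Wei contribution is exactly $1$.
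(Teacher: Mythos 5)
Your proof is correct, but part $(a)$ is argued by a genuinely different route than the paper's. The paper disposes of $p=3,5,7$ by exhibiting explicit independent triples, and then for $p\geqslant 11$ argues by contradiction: assuming $t(G)\leqslant 2$, any two non-adjacent vertices must satisfy $\deg_G(p_1)+\deg_G(p_2)\geqslant |\pi(G)|-2$, and a case split on whether $|{\rm ppd}(2^p-1)|=1$ or $\geqslant 2$, combined with the degree formulas of Corollary \ref{coro-1}, yields incompatible inequalities. You instead apply the Caro--Wei bound to the common degree sequence and decompose the sum over the two components of ${\rm GK}(L)$, showing the $\pi_2$-part contributes exactly $1$ (complete component of uniform degree) and the $\pi_1$-part strictly exceeds $1$ once two non-adjacent primes are located there via Lemma \ref{criterion} and Zsigmondy. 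Your method is more uniform --- a single inequality rather than a case analysis --- and is in the spirit of the paper's own treatment of $L_{10}(2)$ in Theorem \ref{n10}, where Caro--Wei is used numerically; the cost is that you must verify the Zsigmondy exception at $p=7$ and the degenerate choices at $p=3$ by hand, which you flag but only partially carry out (for $p=3$ the group $L_3(2)$ even has $s=3$, so your two-component decomposition does not literally apply and a direct check of the degree pattern is needed, though it is immediate). One small shared imprecision: for part $(b)$ the bound $\deg_G(2)\leqslant|\pi(G)|-3$ (and your parenthetical ``at least two'') fails for $L_7(2)$, where ${\rm ppd}(2^6-1)=\emptyset$ and $\deg(2)=|\pi|-2$; but $\deg_G(2)\leqslant|\pi(G)|-2$ always holds and suffices for $t(2,G)\geqslant 2$. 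Parts $(b)$ and $(c)$ otherwise coincide with the paper's proof.
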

\begin{proof} $(a)$ Suppose first that $L=L_p(2)$.
If $p=3$ (resp. $5$, $7$), then the set $\{2, 3, 5\}$ (resp.
$\{5, 7, 31\}$, $\{7, 31, 127\}$) is an independent set in ${\rm
GK}(G)$, and hence $t(G)\geqslant 3$. Therefore, we may assume
that $p\geqslant 11$.

Assume to the contrary that $t(G)\leqslant 2$. We now point out
some elementary facts about the degree of vertices in ${\rm
GK}(G)$. Firstly, with a similar argument, as in the proof of
Proposition 2.1 in \cite{vv}, we can verify that
$${\rm ppd}(2^p-1)=\pi(2^p-1).$$
Secondly, for two non-adjacent vertices $p_1, p_2\in \pi(G)$,
since $t(G)\leqslant 2$, we obtain
\begin{equation}\label{equation-1}
\deg_G(p_1)+\deg_G(p_2)\geqslant |\pi(G)|-2.
\end{equation}

In what follows, for the sake of convenience, we put $|{\rm
ppd}(2^p-1)|=m$ and $|\pi(G)|=n$. We now consider two cases
separately.

{\em Case $1$.} $m=1$. Suppose that $\pi(2^p-1)=\{q\}$. Then
$\deg_G(q)=0$, and so the Gruenberg-Kegel graph ${\rm GK}(G)$ is
not connected. On the other hand, by Corollary \ref{coro-1} $(a)$
and Theorem \ref{zsig}, we obtain
$$\deg_G(2)=n-|{\rm ppd}(2^{p-1}-1)|-2\leqslant n-3.$$ Hence,
there exists a prime $q'\in \pi(G)\setminus \{q\}$ such that
$q'\nsim 2$. Therefore, the set $\{2, q', q\}$ is an independent
set in ${\rm GK}(G)$, against our hypothesis.

{\em Case $2$.} $m\geqslant 2$. Suppose that ${\rm
ppd}(2^p-1)=\{p_1, p_2, \ldots, p_m\}$. If there exists $p_i$
such that $2\nsim p_i$, then, from Eq. (\ref{equation-1}), we
conclude that
$$\deg_G(2)+\deg_G(p_i)\geqslant n-2.$$
Applying Corollary \ref{coro-1} $(a), (b)$ and some
simplification this leads to
$$n-|\pi(2^{p-1}-1)|\geqslant n,$$
which is a contradiction. Therefore, we may assume that $2\sim
p_i$, for each $i=1, 2, \ldots, m$, and so $\deg_G(2)\geqslant
m$. Now we apply Corollary \ref{coro-1} $(a)$, to get
$$n-m-1>n-m-|{\rm ppd}(2^{p-1}-1)|-1=\deg _G(2)\geqslant m,$$
or equivalently, $m<\frac{n-1}{2}$. Furthermore, there are two
primes $p_i, p_j$ such that $p_i\nsim p_j$ in ${\rm GK}(G)$,
otherwise $\deg_G(p_i)\geqslant m$ and this contradicts the fact
that $\deg_G(p_i)=m-1$. Again, by Eq. (\ref{equation-1}),
$$ \deg_G(p_i)+\deg_G(p_j)\geqslant n-2,$$
which forces $m\geqslant \frac{n}{2}$, a contradiction. This
completes the proof for $L=L_p(2)$.

In the case when $L=L_{p+1}(2)$, the proof is similar to the
previous case and, therefore, omitted. Finally, in both cases,
$t(G)\geqslant 3$ and by Lemma \ref{l-independence}, $G$ is a
non-solvable group.

$(b)$ It is obvious, because $\deg_G(2)\leqslant |\pi(G)|-3$.

$(c)$ Follows from $(a)$, $(b)$ and Theorem \ref{thm-1111}.
\end{proof}
\begin{proposition}[Theorem A, \cite{w}]\label{GK-W} If $G$ is a finite group
with disconnected Gruenberg-Kegel graph ${\rm GK}(G)$, then one of
the following holds:
\begin{itemize}
\item[{\rm (a)}] $s(G)=2$, $G$ is a Frobenius group.

\item[{\rm (b)}] $s(G)=2$, $G=ABC$, where $A$ and $AB$ are normal subgroups of $G$, $B$ is a
normal subgroup of $BC$, and $AB$ and $BC$ are Frobenius groups
(such a group $G$ is called a $2$-Frobenius group).

\item [{\rm (c)}] There exists a non-abelian simple group $P$ such that
$P\leqslant G/K\leqslant {\rm Aut}(P)$ for some nilpotent normal
$\pi_1(G)$-subgroup $K$ of $G$, and $G/P$ is a $\pi_1(G)$-group.
Moreover, ${\rm GK}(P)$ is disconnected, $s(P)\geqslant s(G)$.
\end{itemize}
\end{proposition}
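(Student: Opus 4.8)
The plan is to exploit the one structural feature that disconnectedness hands us for free: if $r$ and $r'$ lie in different connected components of ${\rm GK}(G)$, then $G$ has no element of order $rr'$, so no pair of commuting elements of orders $r$ and $r'$ can exist. Writing the components as $\pi_1,\dots,\pi_s$ with $2\in\pi_1$, every prime in $\pi_i$ for $i\geq 2$ is odd and is ``isolated'' from all primes outside $\pi_i$. I would split the argument according to whether $G$ is solvable, handling the solvable case first and then reducing the general case to it via the maximal normal solvable subgroup $K$.

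For solvable $G$ I would prove that $s(G)=2$ and that $G$ is Frobenius or $2$-Frobenius, by induction on $|G|$ using the Fitting subgroup $F=F(G)$ and the solvable-group fact $C_G(F)\leq F$. Choosing a prime $p$ in a component $\pi_i$ ($i\geq 2$) and a Sylow $p$-subgroup $P$, the no-mixed-order condition forces the action of $P$ (indeed of an element of order $p$) on each $F$-section of coprime order to be fixed-point-free; assembling these fixed-point-free actions yields a Frobenius kernel. When the fixed-point-free action is visible at one level, $G$ is Frobenius; when it only appears after passing to a quotient, the two factorizations combine into the chain $G=ABC$ with $AB$ and $BC$ Frobenius, i.e. a $2$-Frobenius group. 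Exactly two components arise because the prime divisors of a Frobenius complement are mutually adjacent.

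For non-solvable $G$ I would set $\bar G=G/K$, which has trivial solvable radical, so ${\rm Soc}(\bar G)=P_1\times\cdots\times P_t$ is a direct product of non-abelian simple groups. First I would show $t=1$: by Burnside each $P_j$ has at least three prime divisors, so two distinct factors would supply commuting elements of coprime orders and hence edges joining all of their primes; since these primes are coprime to $|K|$ they lift (via a Hall subgroup of a suitable solvable subgroup) to edges of ${\rm GK}(G)$, contradicting disconnectedness. Thus ${\rm Soc}(\bar G)=P$ is simple with $C_{\bar G}(P)=1$, giving $P\leq\bar G\leq{\rm Aut}(P)$. Next, for an isolated prime $r$ an element $x$ of order $r$ must act fixed-point-freely on $K$ (a nontrivial fixed point of prime order $q\in\pi_1$ would yield an element of order $rq$), so Thompson's theorem makes $K$ nilpotent; the same no-mixed-order reasoning shows $r\nmid|K|$ and $r\nmid|\bar G/P|$, so both $K$ and $G/P$ are $\pi_1$-groups.

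The hard part will be the two pieces of bookkeeping that establish $s(P)\geq s(G)$ and pin down the solvable dichotomy precisely. Because $K$ is a $\pi_1$-group, every prime in an isolated component $\pi_i$ ($i\geq 2$) is coprime to $|K|$ and to $|\bar G/P|$, and I would argue that adjacency and non-adjacency among such primes agree across ${\rm GK}(G)$, ${\rm GK}(\bar G)$ and ${\rm GK}(P)$---edges transfer upward by the Hall-subgroup trick and downward under the quotient map---so each $\pi_i$ with $i\geq 2$ is contained in $\pi(P)$ and remains isolated there, whence $s(P)\geq s(G)$ and ${\rm GK}(P)$ is disconnected. Verifying this transfer carefully, together with nailing down in the solvable case that no third component can occur, is where I would expect to spend the most effort.
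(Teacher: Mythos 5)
This statement is not proved in the paper at all: it is Theorem~A of Williams \cite{w} (going back to unpublished work of Gruenberg and Kegel), quoted as a known result, so there is no internal proof to compare yours against. Your sketch does follow the skeleton of the actual Gruenberg--Kegel--Williams argument (solvable case via fixed-point-free actions giving Frobenius or $2$-Frobenius structure; non-solvable case via the solvable radical, a simple socle, and Thompson's nilpotency theorem), but as written it has genuine gaps at exactly the points where the real proof is hard, and it should be regarded as a plan rather than a proof.

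Two gaps are concrete enough to name. First, in the non-solvable case your application of Thompson's theorem is circular as stated: to say that an element $x$ of isolated prime order $r$ ``acts fixed-point-freely on $K$'' you must already know that $r\nmid|K|$ and that $x$ can be chosen to normalize a relevant Sylow structure; proving that the isolated primes avoid $|K|$ and $|\bar G/P|$ (and hence all divide $|P|$) is itself a substantive step, typically done with a Frattini argument plus the non-adjacency hypothesis, and it is also the missing ingredient in your socle argument --- commuting elements from two distinct simple factors only contradict disconnectedness once you know an isolated prime divides the socle's order and gets joined across components, which you never establish. Second, the solvable case is asserted rather than proved: ``assembling these fixed-point-free actions yields a Frobenius kernel'' and ``the two factorizations combine into the chain $G=ABC$'' is precisely the content of the Gruenberg--Kegel lemma, whose proof needs the existence and structure of Hall $\pi_i$-subgroups, the fact that a Frobenius complement has every subgroup of order $pq$ cyclic (hence a complete prime graph, which is what forces $s(G)=2$), and a careful induction; none of this is supplied. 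Since the result also leans on the odd-order theorem and the classification of Frobenius complements, the honest course in a paper at this level is to do what the authors do and cite \cite{w}.
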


The following Propositions deal with the structure of Frobenius
and $2$-Frobenius groups and their Gruenberg-Kegel graphs. One may
find their proofs in \cite{g, mazurov}.
\begin{proposition}[Theorem 3.1, \cite{g}] \label{frob}  If $G$ is a Frobenius
group with the kernel $K$ and complement $C$, then the following
conditions hold:
\begin{itemize}
\item[{\rm (1)}]
$K$ is nilpotent and so its Gruenberg-Kegel graph ${\rm GK}(K)$
is a complete graph, that is ${\rm GK}(K)=K_{|\pi(K)|}$;
\item[{\rm (2)}]  $s(G)=2$ and the connected components of
${\rm GK}(G)$ are ${\rm GK}(K)$ and ${\rm GK}(C)$, that is, ${\rm
GK}(G)={\rm GK}(K)\oplus {\rm GK}(C)$. In particular, we have
${\rm OC}(G)=\{|K|, |C|\}$.
\item[{\rm (3)}] $|C|$ divides $|K|-1$, and so $|C|<|K|$.
\end{itemize}
\end{proposition}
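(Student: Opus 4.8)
The plan is to extract all three statements from the two defining features of a Frobenius group $G=K\rtimes C$: that $C$, and each conjugate $C^{g}$, acts \emph{fixed-point-freely} on $K$, and that $G\setminus\{1\}$ partitions as $(K\setminus\{1\})\cup\bigcup_{g\in G}(C^{g}\setminus\{1\})$. The first thing I would record is the fixed-point-free action: for $1\neq c\in C$ one has $C_{K}(c)=1$, since an element $k\in C_{K}(c)\setminus\{1\}$ would give $c=kck^{-1}\in C\cap C^{k^{-1}}$ with $k^{-1}\notin C$, contradicting $C\cap C^{g}=1$ for $g\notin C$; the same holds verbatim for every $C^{g}$.

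I would prove part $(3)$ first, since it also yields coprimality. Let $C$ act by conjugation on $K\setminus\{1\}$. By the fixed-point-free property every point stabiliser is trivial, so by orbit--stabiliser each orbit has length exactly $|C|$; hence $|C|$ divides $|K\setminus\{1\}|=|K|-1$, so $|C|<|K|$. As an immediate bonus this gives $\gcd(|K|,|C|)=1$, because any common divisor would divide both $|K|$ and $|K|-1$; together with $|G|=|K|\,|C|$ this shows $\pi(G)=\pi(K)\cup\pi(C)$ is a disjoint union, a fact used below.

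For part $(1)$ I would invoke Thompson's theorem that a Frobenius kernel (a group admitting a fixed-point-free automorphism of prime order) is nilpotent. \textbf{This is the deep step and the main obstacle}: the odd-order case genuinely needs Thompson's result, whereas the even-order case, where $C$ contains an involution acting without fixed points, reduces to the classical fact that $K$ is abelian. Nilpotency gives $K=\prod_{p\in\pi(K)}K_{p}$, so for distinct $p,q\in\pi(K)$ any $1\neq x\in K_{p}$ and $1\neq y\in K_{q}$ commute and $xy$ has order $pq$; hence ${\rm GK}(K)$ is the complete graph on $\pi(K)$.

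Finally, for part $(2)$ I would establish ${\rm GK}(G)={\rm GK}(K)\oplus{\rm GK}(C)$. That the subgraph induced on $\pi(K)$ (resp. $\pi(C)$) equals ${\rm GK}(K)$ (resp. ${\rm GK}(C)$) follows because $K$ and each $C^{g}\cong C$ are subgroups and every element of $G$ lies in $K$ or in some $C^{g}$. The crucial point is the absence of edges between $\pi(K)$ and $\pi(C)$: if $g\in G$ had order $pq$ with $p\in\pi(K)$ and $q\in\pi(C)$, then by coprimality its $p$-part lies in $K$ while its $q$-part $g_{q}\neq1$ lies in some $C^{g}$, and the two commute, forcing $g_{q}$ to fix $g_{p}\in K$ and hence $g_{p}=1$ by fixed-point-freeness, a contradiction. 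This gives the splitting and ${\rm OC}(G)=\{|K|,|C|\}$. To reach $s(G)=2$ I must still verify that ${\rm GK}(C)$ is connected; I would do so from the structure of Frobenius complements: their Sylow subgroups are cyclic or generalised quaternion (so each prime has a unique subgroup of order $p$) and every subgroup of order $pq$ is cyclic. In the solvable case, taking a Hall $\{p,q\}$-subgroup forces $\langle Z_{p},Z_{q}\rangle$ of order $pq$ to be cyclic for all $p,q\in\pi(C)$, so ${\rm GK}(C)$ is in fact complete; the non-solvable case is handled by the Zassenhaus description (an extension involving $SL(2,5)$ times a $Z$-group), under which ${\rm GK}(C)$ stays connected. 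Combined with the completeness of ${\rm GK}(K)$, this yields exactly two components.
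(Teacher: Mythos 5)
Your proof is essentially correct, but note that the paper itself offers no argument for this proposition: it is stated as a quotation of Theorem 3.1 of Gorenstein's \emph{Finite Groups} (reference \cite{g}), with the reader sent there for the proof. What you have written is the standard textbook derivation, and your reductions are sound: the fixed-point-free action of $C$ (and of every conjugate $C^{g}$) on $K$, the orbit count giving $|C|\mid |K|-1$ and hence $(|K|,|C|)=1$, Thompson's nilpotency theorem for part $(1)$, and the Frobenius partition $G\setminus\{1\}=(K\setminus\{1\})\sqcup\bigcup_{g}(C^{g}\setminus\{1\})$ for the absence of edges between $\pi(K)$ and $\pi(C)$. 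You are also right to flag that two inputs are genuinely deep and cannot be avoided at this level: Thompson's theorem, and the structure theory of Frobenius complements needed to see that ${\rm GK}(C)$ is connected. Two small points of care in that last step: the phrase ``$\langle Z_{p},Z_{q}\rangle$ of order $pq$'' needs justification --- you should first produce a subgroup of order exactly $pq$ inside the Hall $\{p,q\}$-subgroup (e.g.\ take a minimal normal subgroup, which has prime order because the Sylow subgroups are cyclic or generalised quaternion, and extend by a subgroup of the other prime order), and only then invoke the fact that every order-$pq$ subgroup of a Frobenius complement is cyclic; and in the non-solvable case ${\rm GK}(C)$ is connected but need not be complete (in ${\rm SL}(2,5)$ the primes $3$ and $5$ are non-adjacent, though both are joined to $2$), which you correctly only use for connectivity. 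With those caveats your argument establishes all three parts, which is more than the paper does on its own pages.
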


\begin{proposition}[Lemma 7, \cite{mazurov}]\label{2frob}  In case {\rm (b)} of Proposition $\ref{GK-W}$:
\begin{itemize}
\item[{\rm (1)}] $C$ and $B$ are cyclic groups, and $|B|$ is odd;

\item[{\rm (2)}] ${\rm GK}(B)$ and ${\rm GK}(AC)$ are connected components of
the prime graph ${\rm GK}(G)$, and both of them are complete
graphs. Hence, we have  $${\rm GK}(G)={\rm GK}(AC)\oplus {\rm
GK}(B)=K_{|\pi(AC)|}\oplus K_{|\pi(B)|}.$$ In particular,
$s(G)=2$, $\pi_1(G)=\pi(AC), \pi_2(G)=\pi(B)$, ${\rm
OC}(G)=\{|AC|, |B|\}$, and for every primes $p\in \pi(G)$, we have
$\deg_G(p)=|\pi(AC)|-1$ or $|\pi(B)|-1$.
\end{itemize}
\end{proposition}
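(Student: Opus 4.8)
The plan is to treat the two assertions in turn, using throughout the two Frobenius structures built into case (b): $AB$ is a Frobenius group with kernel $A$ and complement $B$, and $BC$ is a Frobenius group with kernel $B$ and complement $C$. By Proposition \ref{frob} the kernels $A$ and $B$ are nilpotent, and the kernel and complement orders are coprime, so $\gcd(|A|, |B|) = \gcd(|B|, |C|) = 1$; in particular $\pi(B)$ is disjoint from $\pi(A) \cup \pi(C)$, which gives the partition $\pi(G) = \pi(AC) \sqcup \pi(B)$ underlying part (2).

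For part (1), the key observation is that $B$ plays a double role: it is the kernel of $BC$ (hence nilpotent) and the complement of $AB$ (hence a Frobenius complement, so all its Sylow subgroups are cyclic or generalized quaternion). If $2 \mid |B|$, then, $B$ being nilpotent, its Sylow $2$-subgroup is a direct factor with a unique involution $z$, which is therefore the unique involution of $B$ and is fixed by the conjugation action of $C$; but $C$ acts fixed-point-freely on $B$, forcing $z = 1$, a contradiction. Hence $|B|$ is odd. An odd-order Frobenius complement has all Sylow subgroups cyclic, and a nilpotent group with cyclic Sylow subgroups is cyclic, so $B$ is cyclic. Next, $C$ acts faithfully on $B$ (a complement centralizing the kernel would be trivial), so $C$ embeds in the abelian group $\mathrm{Aut}(B)$; thus $C$ is an abelian Frobenius complement, whence cyclic. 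This disposes of (1).

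For part (2), I would first record the easy cliques: $\mathrm{GK}(A)$ is complete since $A$ is nilpotent, while $\mathrm{GK}(B)$ and $\mathrm{GK}(C)$ are complete since $B$ and $C$ are cyclic by part (1). The separation of $\pi(B)$ from $\pi(A) \cup \pi(C)$ is then proved using two facts: a Sylow $p$-subgroup for $p \in \pi(B)$ lies inside the normal subgroup $AB$ (as $[G:AB]$ divides $|C|$, which is coprime to $p$) and there acts fixed-point-freely on $A$; and the quotient $G/A$ is again Frobenius, with kernel $AB/A \cong B$ and complement isomorphic to $C$, so $\pi(B)$ and $\pi(C)$ are non-adjacent in $G/A$. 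Concretely, an element of order $pr$ with $p \in \pi(B)$ and $r \in \pi(A) \cup \pi(C)$ would, after passing to $G/A$, either produce a forbidden kernel--complement edge in the Frobenius group $G/A$, or have its $r$-part lie in $A$ and be centralized by a non-trivial $p$-element of a complement of $AB$; both are impossible. Thus $\pi(B)$ is a complete connected component, and since $s(G) = 2$ the remaining primes $\pi(AC)$ form the other component.

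The main obstacle is showing that this second component $\pi(AC)$ is \emph{complete}, i.e. that every $p \in \pi(A)$ is adjacent to every $q \in \pi(C)$; connectedness alone (which is free once $s(G) = 2$) does not give this, and indeed if $C$ acted fixed-point-freely on $A$ the graph would split further and the statement would fail. The route I would take is first to show that $C$ acts \emph{faithfully} on $A$: if $c \in C$ centralized $A$ then, since $C$ normalizes $B$ and $B$ embeds faithfully in $\mathrm{Aut}(A)$, a short computation shows $c$ centralizes $B$ as well, forcing $c = 1$. Hence $BC$ embeds in $\mathrm{Aut}(A)$ with $B$ acting fixed-point-freely. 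Restricting to a chief factor $\bar{A}_p$ of $A$, the cyclic group $B$ acts fixed-point-freely, so $\bar{A}_p$ carries a Singer-type $\mathbb{F}_p[B]$-module structure whose normalizer is of semilinear type $\Gamma L$; the complement $C$ then acts through field (Galois) automorphisms, which necessarily fix the prime subfield and hence a non-trivial element of $\bar{A}_p$. Carrying this out Sylow-by-Sylow over $\pi(C)$ and prime-by-prime over $\pi(A)$ yields, for each pair $(p, q)$, a $q$-element centralizing a non-trivial $p$-element, i.e. the required edge. Once all cross edges are in place, $\pi(AC)$ is a complete component, and the remaining conclusions --- $s(G) = 2$, $\pi_1(G) = \pi(AC)$, $\pi_2(G) = \pi(B)$, $\mathrm{OC}(G) = \{|AC|, |B|\}$, and the two possible vertex degrees $|\pi(AC)| - 1$ and $|\pi(B)| - 1$ --- are immediate. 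I expect the semilinear analysis of the $BC$-action on the chief factors of $A$, guaranteeing the cross edges for \emph{every} pair of primes rather than merely one, to be the delicate technical heart of the argument.
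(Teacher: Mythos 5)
The paper offers no proof of this proposition --- it is quoted from Mazurov's paper \cite{mazurov} --- so there is nothing internal to compare against. Your part (1) and the separation of $\pi(B)$ from $\pi(AC)$ in part (2) are correct and follow the standard lines: the unique involution of a Frobenius complement of even order is characteristic and hence cannot be moved fixed-point-freely by $C$; an odd-order nilpotent Frobenius complement is cyclic; $C$ embeds in the abelian group ${\rm Aut}(B)$; and the argument via $G/A\cong BC$ together with the fixed-point-free action of complements of $AB$ on $A$ correctly kills all edges out of $\pi(B)$.

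The genuine gap is in the completeness of ${\rm GK}(AC)$, precisely the step you flag as the ``delicate technical heart.'' Your claim that $C$ acts on the chief factor $\bar{A}_p$ ``through field (Galois) automorphisms, which necessarily fix the prime subfield'' is false as stated: the normalizer of a Singer-type subgroup in ${\rm GL}(V)$ is $\Gamma L_1(p^d)=\mathbb{F}_{p^d}^{*}\rtimes{\rm Gal}(\mathbb{F}_{p^d}/\mathbb{F}_p)$, so an element $c\in C$ acts as $v\mapsto \lambda v^{\sigma}$ with a multiplier $\lambda\in\mathbb{F}_{p^d}^{*}$, and such a semilinear map can perfectly well be fixed-point-free (e.g.\ a pure scalar $\lambda\neq 1$). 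To close this you must first note that $\sigma\neq 1$ (because conjugation by $c$ acts on $B\leq\mathbb{F}_{p^d}^{*}$ through $\sigma$ and $c$ acts fixed-point-freely on $B$), then compute that $c^{q}$ acts as multiplication by the norm $N(\lambda)=\lambda\lambda^{\sigma}\cdots\lambda^{\sigma^{q-1}}$, deduce $N(\lambda)=1$ from $c^{q}=1$, and invoke Hilbert 90 to write $\lambda=\mu^{\sigma}/\mu$, whence $\mu^{-1}$ is a nonzero fixed vector; the non-homogeneous case needs a separate (easy) Clifford-theoretic orbit-sum argument. Alternatively, the whole analysis can be avoided: if some $c\in C$ of prime order $q$ acted fixed-point-freely on $A_p$, then, since $B$ already acts fixed-point-freely and every nontrivial element of the Frobenius group $B\langle c\rangle$ lies in $B$ or in a conjugate of $\langle c\rangle$, the group $A_p\rtimes B\langle c\rangle$ would be a Frobenius group whose complement $B\langle c\rangle$ is itself a Frobenius group; but a Frobenius complement has all subgroups of order $rq$ cyclic, whereas $B_0\langle c\rangle$ (with $B_0$ the subgroup of order $r$ of the cyclic group $B$) is a nonabelian group of order $rq$. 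This contradiction gives $C_{A_p}(c)\neq 1$ directly and yields every required edge between $\pi(A)$ and $\pi(C)$.
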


The following result will be used frequently throughout next
section.
\begin{lm}\label{inK} Let $G$ be a finite group and $K$ be a normal solvable subgroup
of $G$. Let $p, q\in \pi(G)$ such that $p\not\equiv 1\pmod{q}$,
$q\not\equiv 1\pmod{p}$ and $|G_pG_q|=pq$. If $p$ divides the
order of $K$, then $p\sim q$ in ${\rm GK}(G)$.
\end{lm}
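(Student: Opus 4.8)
The plan is to prove the statement by producing a subgroup of $G$ of order $pq$: since a group of order $pq$ (with $p,q$ distinct primes) is non-abelian only when $p\equiv 1\pmod q$ or $q\equiv 1\pmod p$, the two congruence hypotheses force any such subgroup to be cyclic, hence to contain an element of order $pq$, giving $p\sim q$ in ${\rm GK}(G)$. So the whole task reduces to locating a subgroup of order $pq$.

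First I would unwind the hypothesis $|G_pG_q|=pq$. As $G_p$ is a $p$-group and $G_q$ a $q$-group with $p\neq q$, we have $G_p\cap G_q=1$, so $|G_pG_q|=|G_p|\,|G_q|$; comparing with $pq$ yields $|G_p|=p$ and $|G_q|=q$. Thus $p$ and $q$ each divide $|G|$ exactly once. In particular, since $p$ divides $|K|$ and $K\leqslant G$, a Sylow $p$-subgroup $P$ of $K$ has order $p$ and is therefore also a Sylow $p$-subgroup of $G$; I fix such a $P\leqslant K$ for the rest of the argument.

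I would then split into two cases according to whether $q$ divides $|K|$. If $q\mid |K|$, then $q$ also divides $|K|$ exactly once (as $|K|_q$ divides $|G|_q=q$), so $p,q\in\pi(K)$ each exactly divide $|K|$; here I use that $K$ is solvable, so by P. Hall's theorem $K$ possesses a Hall $\{p,q\}$-subgroup, which has order $pq$ and is the desired subgroup. If instead $q\nmid|K|$, I invoke the Frattini argument: since $K\trianglelefteq G$ and $P\in{\rm Syl}_p(K)$, we get $G=K\,N_G(P)$, whence $[G:N_G(P)]=[K:N_K(P)]$ divides $|K|$ and so is coprime to $q$. Because $q$ divides $|G|$ exactly once, it must then divide $|N_G(P)|$, and Cauchy's theorem supplies a subgroup $Q\leqslant N_G(P)$ of order $q$. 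As $Q$ normalizes $P$, the product $PQ$ is a subgroup of order $pq$.

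The only genuine decision in the proof is the case distinction on $q\mid|K|$, and the case $q\mid|K|$ is the one requiring care: the clean way to extract a subgroup of order $pq$ there is the existence of Hall $\pi$-subgroups in solvable groups, which is exactly where the solvability of $K$ is indispensable (the Frattini step in the other case does not by itself place $Q$ inside a normalizer of $P$ when $q$ could divide the index). Once a subgroup of order $pq$ has been produced in either case, the conclusion is immediate from the elementary classification of groups of order $pq$ together with the two non-congruence hypotheses, and I anticipate no further difficulty.
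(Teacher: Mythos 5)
Your proof is correct and follows essentially the same route as the paper's: the case $q\notin\pi(K)$ is handled by the identical Frattini argument, and the case $q\in\pi(K)$ by exhibiting a subgroup of $K$ of order $pq$ (the paper merely asserts that $K$ contains a cyclic subgroup of order $pq$, and your appeal to Hall's theorem for the solvable group $K$ is exactly the justification it leaves implicit). Your closing observation that both congruence hypotheses force any subgroup of order $pq$ to be cyclic matches the paper's intent.
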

\begin{proof} If $q\in \pi(K)$, then $K$ contains a cyclic
subgroup of order $pq$, and the result is proved. Hence, we may
assume that $q\notin \pi(K)$. Let $P$ be a Sylow $p$-subgroup of
$K$. Then $G=KN_G(P)$ by Frattini argument, and so $N_G(P)$
contains an element of order $q$, say $x$. Clearly $P\langle x
\rangle$ is a cyclic subgroup of $G$ of order $pq$, and hence
$p\sim q$ in ${\rm GK}(G)$. This completes the proof.
\end{proof}

We now present the following useful degree criterion for
non-solvability a group using whose degree pattern.
\begin{lm}\label{non-solvablility}
Let $G$ be a finite group satisfies $\Omega_0(G)\neq \emptyset$
and $\Omega_i(G)\neq \emptyset$ for some $1\leqslant i\leqslant
|\pi(G)|-3$ (i.e., there exists a vertex in ${\rm GK}(G)$ of
degree at most $|\pi(G)|-3$). Then $t(G)\geqslant 3$ and
especially $G$ is non-solvable.
\end{lm}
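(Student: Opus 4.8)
The plan is to produce an independent set of size three in ${\rm GK}(G)$ and then quote Lemma \ref{l-independence} to deduce non-solvability. Write $h=|\pi(G)|$; note that the hypothesis $1\leqslant i\leqslant h-3$ already forces $h\geqslant 4$. First I would fix an isolated vertex $p_0\in\Omega_0(G)$ (which exists since $\Omega_0(G)\neq\emptyset$) and a vertex $p_1$ of degree $d$ with $1\leqslant d\leqslant h-3$ (supplied by the second hypothesis, i.e. $\Omega_d(G)\neq\emptyset$ for some such $d$). Because $\deg_G(p_1)=d\geqslant 1>0=\deg_G(p_0)$, the two vertices $p_0$ and $p_1$ are automatically distinct.

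The heart of the argument is a short degree count, exploiting that an isolated vertex can be appended to any independent set. It suffices to locate a single vertex $q\in\pi(G)$ with $q\notin\{p_0,p_1\}$ and $q\nsim p_1$: then $\{p_0,p_1,q\}$ is independent, since $p_0$ is non-adjacent to everything (in particular to $p_1$ and to $q$), while $q\nsim p_1$ by choice. To produce such a $q$ I would count the non-neighbours of $p_1$. Among the $h-1$ vertices other than $p_1$, exactly $d$ are adjacent to it, so at least $(h-1)-d\geqslant(h-1)-(h-3)=2$ are non-adjacent to $p_1$. One of these non-neighbours is $p_0$ itself (again because $p_0$ is isolated), and since there are at least two of them, at least one further vertex $q$ remains with $q\neq p_0$ and $q\neq p_1$. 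This gives the desired independent set $\{p_0,p_1,q\}$ of three pairwise non-adjacent vertices, whence $t(G)\geqslant 3$, and Lemma \ref{l-independence} then yields that $G$ is non-solvable.

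I do not expect a genuine obstacle here, as the proof is essentially a counting step. The only point requiring care is checking that the third vertex $q$ is distinct from both $p_0$ and $p_1$, and this is precisely where the bound $d\leqslant h-3$ (equivalently $i\leqslant|\pi(G)|-3$) is needed: it guarantees $p_1$ has at least two non-neighbours, leaving room for $q$ once $p_0$ has been accounted for. Were the bound only $d\leqslant h-2$, one could guarantee just a single non-neighbour of $p_1$, which might coincide with $p_0$, and the construction would fail; so I would take care to flag that the hypothesis is used in exactly this sharp form.
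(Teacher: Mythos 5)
Your argument is correct: the paper omits the proof of this lemma as ``straightforward,'' and your counting of the non-neighbours of $p_1$ --- at least $(h-1)-(h-3)=2$ of them, hence at least one besides the isolated vertex $p_0$ --- is precisely the intended argument, producing the independent triple $\{p_0,p_1,q\}$ and then non-solvability via Lemma \ref{l-independence}. Your closing remark about why the hypothesis $i\leqslant|\pi(G)|-3$ is needed in exactly this sharp form is also accurate.
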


We omit the straightforward proof.

\begin{lm}\label{43} (\cite{banoo-alireza}, \cite{za})
Let $S$ be a finite non-abelian simple group such that its order
divides $|L_{n}(2)|$ where $n\in \{10, 11\}$. Then
\begin{itemize}
\item [{$(1)$}] if $n=10$ and $\{11, 73\}\subset \pi(S)$, then  $S$ is isomorphic to $L_{10}(2)$
\item [{$(2)$}] If $n=11$ and $\{23, 89\}\subset \pi(S)$, then  $S$ is isomorphic to
$L_{11}(2)$.
\end{itemize}
\end{lm}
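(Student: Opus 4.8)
The plan is to apply the Classification of Finite Simple Groups and examine the three possibilities for $S$ --- alternating, sporadic, and of Lie type --- in each case exploiting that the two prescribed primes are large and are primitive prime divisors of $2^k-1$ for large $k$, as recorded by the factorisations $2^9-1=7\cdot73$, $2^{10}-1=3\cdot11\cdot31$ and $2^{11}-1=23\cdot89$: for $n=10$ we have $\mathrm{ord}_{73}(2)=9$ and $\mathrm{ord}_{11}(2)=10$, while for $n=11$ we have $\mathrm{ord}_{23}(2)=\mathrm{ord}_{89}(2)=11$. I will also use repeatedly that $11,17,73,127$ (and, for $n=11$, also $23,89$) occur only to the first power in $|L_n(2)|$.

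The alternating and sporadic cases are quickly dispatched. If $S\cong\mathbb{A}_m$, the hypothesis forces $73\mid m!$ (resp. $89\mid m!$), so $m\geqslant73$ (resp. $m\geqslant89$); but then $13$ is a prime not exceeding $m$, so $13\in\pi(S)$, contradicting $|S|\mid|L_n(2)|$ since $13\notin\pi(L_n(2))$. For the sporadic groups one reads from the order list that none is divisible by $73$ (resp. $89$), the largest prime dividing the order of any sporadic group being $71$; so this case cannot occur.

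The substantive case is $S$ of Lie type. Here I use the fact recalled above that every prime divisor of $|S|$ other than the defining characteristic is a primitive prime divisor of $q^k-1$ for some $k$, together with Zsigmondy's theorem (Theorem \ref{zsig}). First I would rule out odd defining characteristic $p\in\pi(L_n(2))$: if $p$ is itself one of the prescribed primes, then the $p$-part of $|S|$ already exceeds the first power of $p$ in $|L_n(2)|$ (for example $p=73$ is excluded already by $L_2(73)$, since $74=2\cdot37$ introduces the forbidden prime $37$); and if $p$ is any other admissible odd prime, then demanding that the prescribed primes be primitive prime divisors of $p^j\mp1$ forces, via Zsigmondy, a prime of some intermediate exponent lying outside $\pi(L_n(2))$. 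Hence $S$ is defined in characteristic $2$, over some $\mathbb{F}_{2^f}$.

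It then remains to sweep through the characteristic-$2$ families $L_m(2^f)$, $U_m(2^f)$, $S_{2m}(2^f)$, $O^{\pm}_{2m}(2^f)$ and the exceptional types, using that the odd prime divisors of $|S|$ are precisely the primitive prime divisors of $2^{fk}\mp1$ attached to the exponents $k$ of the family. Requiring both prescribed primes to divide $|S|$ forces the corresponding exponent to be attained: for $n=11$ one needs an admissible $k$ with $11\mid fk$, and for every family other than $L_m$ over $\mathbb{F}_2$ this either pushes the power of $2$ in $|S|$ beyond $2^{55}$ (as for $S_{2m}(2)$ with $m\geqslant11$, whence $2^{m^2}\geqslant2^{121}$, or for $U_m(2)$, where $23\nmid2^k+1$ for every $k$ since $\mathrm{ord}_{23}(2)=11$ is odd, forcing the exponent $22$ and hence $m\geqslant22$), or enlarges the field ($f\geqslant2$) and thus the order past $|L_{11}(2)|$, or introduces a Zsigmondy prime outside $\pi(L_{11}(2))$. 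The only survivor is $L_m(2)$ with $2^{11}-1\mid|L_m(2)|$, that is $m\geqslant11$, whereupon $|S|\mid|L_{11}(2)|$ forces $m=11$ and $S\cong L_{11}(2)$; the case $n=10$ is identical, with $73,11$ in place of $23,89$ and the exponents $9,10$ in place of $11$, giving $m=10$. The main obstacle is exactly this last exhaustive check over the Lie type families: for each one must confirm that reaching the exponent carrying the deep prime either inflates the $2$-part past the bound in $|L_n(2)|$ or produces a forbidden prime, and the bookkeeping for the unitary and orthogonal families --- where the factor is $2^{fk}\pm1$ and the sign decides which exponents actually contribute the prime --- is the most delicate part.
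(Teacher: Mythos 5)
Your proposal takes a fundamentally different route from the paper: the paper does not re-derive anything, it simply invokes \cite{banoo-alireza} and \cite{za} (Zavarnitsine's determination of the simple groups with narrow prime spectrum), which supply the complete list of non-abelian simple groups whose order divides $|L_{11}(2)|$ (Table 3), and then the lemma is a table lookup of which entries contain $\{11,73\}$, respectively $\{23,89\}$. What you are attempting is essentially a from-scratch reproof of the content of those references via CFSG, Zsigmondy primes and order bookkeeping. That is a legitimate strategy in principle (it is how the cited classification is proved), but as written your argument has a genuine gap, beyond the fact that the decisive sweep over the Lie-type families is only asserted.

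The gap is in the step ``first rule out odd defining characteristic,'' which you carry out before imposing \emph{both} prescribed primes. Take $S=L_2(89)$: its order is $2^3\cdot 3^2\cdot 5\cdot 11\cdot 89$, which divides $|L_{11}(2)|$; the $89$-part is exactly $89^1=|L_{11}(2)|_{89}$, so the ``$p$-part exceeds the first power'' clause fails; and $89-1=2^3\cdot 11$, $89+1=2\cdot 3^2\cdot 5$ introduce no prime outside $\pi(L_{11}(2))$, so the fallback you use for $L_2(73)$ (the forbidden prime $37$ dividing $73+1$) fails as well. The same happens for $n=10$ with $L_2(11)$ and with $U_3(3^2)$, whose order $2^5\cdot 3^6\cdot 5^2\cdot 73$ divides $|L_{10}(2)|$, contains the prescribed prime $73$, and produces no intermediate Zsigmondy prime outside $\pi(L_{10}(2))$. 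All of these groups sit in the paper's Table 3 and are eliminated only because they lack the \emph{second} prescribed prime; none is caught by the mechanisms you list, so your conclusion ``hence $S$ is defined in characteristic $2$'' does not follow from the argument given. To repair the structure you must run the odd-characteristic analysis using both primes simultaneously (two incompatible multiplicative orders over the given base field, e.g.\ $\mathrm{ord}_{11}(3)=5$ versus $\mathrm{ord}_{73}(3)=12$), which turns it into exactly the kind of exhaustive case sweep you defer for the characteristic-$2$ families --- at which point you have reproduced the work of \cite{za} rather than found a shortcut around it.
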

\begin{proof}
By results collected in \cite{banoo-alireza, za}, if $S$ is a
finite non-abelian simple group such that its order divides the
order of $L_{11}(2)$, then $S$ is isomorphic to one of the simple
groups listed below in Table 3. Now, the lemma follows by
checking the conditions in $(1)$ and $(2)$.
\end{proof}
\begin{center}
{\small {\bf Table 3}. {\em The simple group $S$ whose order
divides $|L_{11}(2)|=2^{55}\cdot 3^6\cdot 5^2 \cdot 7^3\cdot
11\cdot 17 \cdot 23\cdot
31^2 \cdot 73\cdot 89 \cdot 127$.}}\\[0.4cm]
\begin{tabular}{ll}
$\begin{tabular}{|l|l|} \hline
&\\[-0.26cm]
$S$ & $|S|$ \\[0.1cm]
\hline
&\\[-0.26cm]
$\mathbb{A}_5$ & $2^2\cdot3\cdot5$  \\ [0.1cm] $\mathbb{A}_6$ &
$2^3\cdot3^2\cdot5$ \\ [0.1cm]
$U_4(2)$& $2^6\cdot 3^4\cdot 5$  \\[0.1cm]
$\mathbb{A}_7$ & $2^3\cdot3^2\cdot5\cdot7$  \\ [0.1cm]
$\mathbb{A}_8$ & $2^6\cdot3^2\cdot5\cdot7$  \\ [0.1cm]
$\mathbb{A}_9$ & $2^6\cdot3^4\cdot5\cdot7$  \\ [0.1cm]
$\mathbb{A}_{10}$ & $2^7\cdot3^4\cdot5^2\cdot7$  \\ [0.1cm]
$B_3(2)$ & $2^9\cdot 3^4\cdot 5\cdot 7$ \\[0.1cm]
$O_8^+(2)$ & $2^{12}\cdot 3^5\cdot 5^2\cdot 7$ \\[0.1cm]
$L_3(2^2)$ & $2^6\cdot 3^2\cdot 5\cdot 7$ \\[0.1cm]
$L_2(2^3)$ & $2^3\cdot 3^2\cdot 7$  \\[0.1cm]
$U_3(3)$ & $2^5\cdot 3^3\cdot 7$  \\[0.1cm]
$U_4(3)$ & $2^7\cdot 3^6\cdot 5\cdot 7$ \\[0.1cm]
$L_2(7)$& $2^3\cdot 3\cdot 7$  \\[0.1cm]
$L_2(7^2)$ & $2^4\cdot 3\cdot 5^2\cdot 7^2$  \\[0.1cm]
$J_2$ & $2^7\cdot 3^3\cdot 5^2\cdot 7$ \\[0.1cm]
$U_5(2)$ & $2^{10}\cdot3^5\cdot5\cdot11$  \\ [0.1cm] $U_6(2)$ &
$2^{15}\cdot3^6\cdot5\cdot7\cdot11$  \\ [0.1cm]
$L_2(11)$ & $2^2\cdot3\cdot5\cdot11$  \\[0.1cm]
$M_{11}$ & $2^4\cdot3^2\cdot5\cdot11$  \\ [0.1cm] $M_{12}$ &
$2^6\cdot3^3\cdot5\cdot11$ \\ [0.1cm] $M_{22}$ &
$2^7\cdot3^2\cdot5\cdot7\cdot11$  \\[0.1cm]
$\mathbb{A}_{11}$ &
$2^7\cdot3^4\cdot5^2\cdot7\cdot11$  \\[0.1cm]
$\mathbb{A}_{12}$ &
$2^9\cdot3^5\cdot5^2\cdot7\cdot11$  \\[0.1cm]
$C_4(2)$&$2^{16}\cdot 3^5\cdot 5^2\cdot 7\cdot 17$\\[0.1cm]
\hline
\end{tabular} $ & \!\!\!\!\!
$\begin{tabular}{|l|l|} \hline
&\\[-0.26cm]
$S$ & $|S|$ \\[0.1cm]
\hline
&\\[-0.26cm]
$O_{8}^-(2)$&$2^{12}\cdot 3^4\cdot 5\cdot 7\cdot 17$\\[0.1cm]
$O_{10}^-(2)$& $2^{20}\cdot 3^6\cdot 5^2\cdot 7\cdot 11\cdot17$\\[0.1cm]
$L_4(2^2)$&$2^{12}\cdot 3^4\cdot 5^2\cdot 7\cdot 17$\\[0.1cm]
$C_2(2^2)$&$2^8\cdot 3^2\cdot5^2\cdot 17$\\[0.1cm]
$L_2(2^4)$&$2^4\cdot 3\cdot 5\cdot 17$\\[0.1cm]
$L_2(17)$&$2^4\cdot 3^2\cdot 17$\\[0.1cm]
${\rm He}$&$2^{10}\cdot 3^3\cdot 5^2\cdot 7^3\cdot 17$\\[0.1cm]
$L_{2}(23)$&$2^{3}\cdot 3\cdot 11\cdot 23$\\[0.1cm]
$M_{23}$ & $2^{7}\cdot 3^2\cdot 5\cdot 7\cdot 11\cdot 23$\\[0.1cm]
$M_{24}$ & $2^{10}\cdot 3^3\cdot 5\cdot 7\cdot 11\cdot 23$\\[0.1cm]
$L_2(31)$&$2^5\cdot 3\cdot 5\cdot 31$\\[0.1cm]
$L_5(2)$&$2^{10}\cdot 3^2\cdot 5\cdot 7\cdot 31$\\[0.1cm]
$L_6(2)$&$2^{15}\cdot 3^4\cdot 5\cdot 7^2\cdot 31$\\[0.1cm]
$S_{10}(2)$&$2^{25}\cdot 3^6\cdot 5^2\cdot 7\cdot 11\cdot 17\cdot
31$\\[0.1cm]
$O_{10}^+(2)$&$2^{20}\cdot 3^5\cdot 5^2\cdot 7\cdot 17\cdot31$\\[0.1cm]
$L_5(2^2)$&$2^{20}\cdot 3^5\cdot 5^2\cdot 7\cdot 11\cdot 17\cdot31$\\[0.1cm]
$L_2(2^5)$&$2^5\cdot 3\cdot 11\cdot 31$\\[0.1cm]
$L_3(2^3)$ & $2^9\cdot3^2\cdot7^2\cdot73$  \\[0.1cm]
$U_3(3^2)$ & $2^5\cdot3^6\cdot5^2\cdot73$ \\[0.1cm]
$L_2(89)$ & $2^{3}\cdot 3^2\cdot 5 \cdot 11\cdot 89$  \\[0.1cm]
$L_7(2)$ & $2^{21}\cdot 3^4\cdot 5 \cdot 7^2\cdot 31 \cdot 127$ \\[0.1cm]
$L_8(2)$ & $2^{28}\cdot 3^5\cdot 5^2 \cdot 7^2\cdot 17 \cdot 31 \cdot 127$  \\[0.1cm]
$L_9(2)$ & $2^{36}\cdot 3^5\cdot 5^2 \cdot 7^3\cdot 17 \cdot
31\cdot 73 \cdot 127$  \\[0.1cm]
$L_{10}(2)$ & $2^{45}\cdot 3^6\cdot 5^2 \cdot 7^3\cdot 11\cdot 17
\cdot
31^2\cdot 73 \cdot 127$  \\[0.1cm]
$L_{11}(2)$ & $2^{55}\cdot 3^6\cdot 5^2 \cdot 7^3\cdot 11\cdot 17
\cdot 23\cdot 31^2 \cdot 73\cdot 89 \cdot 127$  \\[0.1cm]
\hline
\end{tabular} $
\end{tabular}
\end{center}
\section{OD-Characterizability of Certain Groups}
As we mentioned earlier in the Introduction, we are going to show
that the simple groups $L_{10}(2)$, $L_{11}(2)$ and the
automorphism groups ${\rm Aut} (L_{p}(2))$ and ${\rm Aut}
(L_{p+1}(2))$, where $2^p-1$ is a Mersenne prime, are uniquely
determined through their orders and degree patterns.
\subsection{OD-Characterizability of Simple Groups $L_{10}(2)$ and
$L_{11}(2)$}Here, we show that the simple groups $L_{10}(2)$ and
$L_{11}(2)$ are OD-characterizable. We start with the following
theorem.
\begin{theorem}\label{n10} Let $G$ be a finite group which
satisfies the following conditions:
\begin{itemize}
\item $|G|=2^{45}\cdot 3^6\cdot 5^2 \cdot 7^3\cdot 11\cdot 17 \cdot
31^2 \cdot 73 \cdot 127$,  and \item $D(G)=(6, 7, 5, 6, 2, 3, 5,
1, 3)$.
\end{itemize}
Then $G$ is isomorphic to $L_{10}(2)$.
\end{theorem}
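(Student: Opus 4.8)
The plan is to recover a non-abelian simple composition factor of $G$ from the recognition machinery of Section~2 and then to pin it down to $L_{10}(2)$ by exploiting the two ``almost isolated'' primes $11$ and $73$.

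First I would record that $\pi(G)=\{2,3,5,7,11,17,31,73,127\}$, so $|\pi(G)|=9$, and that the prescribed degree pattern gives in particular $\deg_G(2)=6$, $\deg_G(11)=2$ and $\deg_G(73)=1$. Applying the Caro--Wei bound (Lemma~\ref{lcw}) to $\mathrm{GK}(G)$, which depends only on the degree sequence, yields
\[
 t(G)\;\geqslant\;\sum_{p\in\pi(G)}\frac{1}{1+\deg_G(p)}=\frac17+\frac18+\frac16+\frac17+\frac13+\frac14+\frac16+\frac12+\frac14=\frac{349}{168}>2,
\]
so $t(G)\geqslant 3$ and hence, by Lemma~\ref{l-independence}, $G$ is non-solvable. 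Moreover $\deg_G(2)=6=|\pi(G)|-3$ shows that $2$ is non-adjacent to at least two primes, whence $t(2,G)\geqslant 2$. Theorem~\ref{thm-1111} then furnishes a non-abelian simple group $S$ with $S\leqslant\bar G=G/K\leqslant\mathrm{Aut}(S)$, where $K$ is the maximal normal solvable subgroup of $G$, and $t(S)\geqslant t(G)-1\geqslant 2$. Since $|S|$ divides $|G|=|L_{10}(2)|$ (a number divisible by neither $23$ nor $89$), the group $S$ must be one of the groups listed in Table~3.

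The heart of the proof is to show $\{11,73\}\subseteq\pi(S)$, for then Lemma~\ref{43}(1) immediately forces $S\cong L_{10}(2)$. Because $|G|_{11}=11$ and $|G|_{73}=73$, Sylow $11$- and $73$-subgroups of $G$ have prime order, and the same holds for $17$ and $127$; thus the hypothesis $|G_pG_q|=pq$ of Lemma~\ref{inK} is met for $p=11$ with each $q\in\{17,73,127\}$ and for $p=73$ with each $q\in\{11,17,127\}$. A short congruence check shows that in every one of these pairs $p\not\equiv 1\pmod q$ and $q\not\equiv 1\pmod p$. Consequently, were $11$ (resp.\ $73$) to divide $|K|$, Lemma~\ref{inK} would make it adjacent in $\mathrm{GK}(G)$ to \emph{three} distinct primes, contradicting $\deg_G(11)=2$ (resp.\ $\deg_G(73)=1$); hence $11,73\notin\pi(K)$. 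Since $\bar G/S$ embeds in $\mathrm{Out}(S)$, and inspection of the outer automorphism orders of the candidates in Table~3 shows that neither $11$ nor $73$ divides $|\mathrm{Out}(S)|$, we also obtain $11,73\nmid|\bar G:S|$. Therefore $\{11,73\}\subseteq\pi(S)$.

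Finally, Lemma~\ref{43}(1) gives $S\cong L_{10}(2)$, so $|S|=|L_{10}(2)|=|G|$; the inclusion $S\leqslant G/K$ then forces $|K|=1$ and $G/K=S$, that is $G\cong L_{10}(2)$. I expect the main obstacle to lie in the middle step: checking, uniformly over the candidates in Table~3, that $11$ and $73$ can be hidden neither in the solvable radical $K$ nor in the outer part $\bar G/S$. The Caro--Wei estimate and the congruences feeding Lemma~\ref{inK} are routine, but one must take care that the three forced neighbours produced by Lemma~\ref{inK} really are distinct, and that the exceptional conclusion (3.2) of Theorem~\ref{thm-1111} (with $S\cong\mathbb{A}_7$ or $L_2(q)$) is ruled out---which is automatic here, since no such $S$ contains both $11$ and $73$ in its prime set.
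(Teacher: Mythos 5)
Your proposal is correct and follows essentially the same route as the paper's own proof: the Caro--Wei bound gives $t(G)\geqslant 3$, Theorem \ref{thm-1111} produces the simple section $S$, Lemma \ref{inK} keeps $11$ and $73$ out of the solvable radical $K$, and Lemma \ref{43}(1) identifies $S\cong L_{10}(2)$, after which $|G|=|L_{10}(2)|$ forces $K=1$. The only difference is that you make explicit a few details the paper leaves implicit (the congruence hypotheses of Lemma \ref{inK}, the check that $11,73\nmid|\bar G/S|$ via $\mathrm{Out}(S)$, and the exclusion of case (3.2) of Theorem \ref{thm-1111}), which is a welcome tightening rather than a change of method.
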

\begin{proof}
Applying Lemma \ref{lcw} and easy computations show that
$$t(G)\geqslant \sum_{p\in \pi(G)}\frac{1}{1+\deg_G(p)}\approx 2.07.$$
Hence, $t(G)\geqslant 3$ and $G$ is a non-solvable group by Lemma
\ref{l-independence}. In addition, since
$\deg_G(2)=|\pi(G)|-3=6$, $t(2,G)\geqslant 2$. Let $K$ be the
maximal normal solvable subgroup of $G$. Then, by Theorem
\ref{thm-1111}, there exists a finite non-abelian simple group
$S$ such that $S\leq G/K\leq {\rm Aut}(S)$. Evidently, $K$ is a
$\{11, 73\}'$-group, since otherwise by Lemma \ref{inK}, we
obtain $\deg_G(11)\geqslant 3$ or $\deg_G(73)\geqslant 3$, which
is a contradiction. Now, it is clear that $|S|$ is divisible by
11 and 73, and from Lemma \ref{43} $(1)$, it follows that $S\cong
L_{10}(2)$. Finally, since $|G|=|L_{10}(2)|$, we conclude that
$|K|=1$ and $G\cong L_{10}(2)$. \end{proof}

\begin{theorem}\label{n11} Let $G$ be a finite group. Then $G\cong {L_{11}(2)}$
if and only if $G$ satisfies the following conditions:
\begin{itemize}
\item $|G|=2^{55}\cdot 3^6\cdot 5^2 \cdot 7^3\cdot 11\cdot 17
\cdot23\cdot  31^2 \cdot 73 \cdot 89\cdot 127$,  and \item
$D(G)=(7, 8, 6, 7, 2, 4, 1, 5, 3, 1, 4)$.
\end{itemize}
\end{theorem}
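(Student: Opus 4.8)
The plan is to mirror the proof of Theorem \ref{n10}, since Theorem \ref{n11} is the $n=11$ companion statement, with the forward direction being the substantive content. The converse is trivial: if $G\cong L_{11}(2)$, then $|G|=|L_{11}(2)|$ by the order formula for $|L_n(2)|$, and $D(G)=D(L_{11}(2))=(7,8,6,7,2,4,1,5,3,1,4)$ by the entry computed in Table 2 via Corollary \ref{coro-1}. So I would dispose of that in a single sentence and concentrate on showing that any $G$ satisfying the two displayed conditions must be isomorphic to $L_{11}(2)$.

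For the forward direction, first I would establish non-solvability and the key independence data. Applying Lemma \ref{lcw} (Caro--Wei) to $D(G)=(7,8,6,7,2,4,1,5,3,1,4)$ gives $t(G)\geqslant \sum_{p}1/(1+\deg_G(p))$, and an easy computation should yield a value exceeding $2$, forcing $t(G)\geqslant 3$; by Lemma \ref{l-independence} this makes $G$ non-solvable. (Alternatively, since $\Omega_0(L_{11}(2))=\emptyset$ here, one cannot invoke Lemma \ref{non-solvablility}, so the Caro--Wei route is the natural one.) Next, reading off $\deg_G(2)=|\pi(G)|-3$ from the degree pattern gives $t(2,G)\geqslant 2$, exactly as in Theorem \ref{n10}. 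With $t(G)\geqslant 3$ and $t(2,G)\geqslant 2$ in hand, Theorem \ref{thm-1111} (or equivalently Lemma \ref{r-result}(c)) supplies a non-abelian simple group $S$ with $S\leqslant \bar G=G/K\leqslant {\rm Aut}(S)$, where $K$ is the maximal normal solvable subgroup of $G$.

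The decisive step is to show $\{23,89\}\subseteq\pi(S)$ so that Lemma \ref{43}(2) can be applied. Here $23\in{\rm ppd}(2^{11}-1)=\pi(2^{11}-1)=\{23,89\}$ and $127=2^7-1$, $31\in{\rm ppd}(2^5-1)$; the primes $23$ and $89$ both sit in the small connected component $\pi_2=\pi(2^{11}-1)$, so by Lemma \ref{criterion} they are non-adjacent to almost everything. I would argue that $K$ is a $\{23,89\}'$-group: if say $23\bigm|\,|K|$, then choosing a suitable prime $q$ with the non-congruence and Sylow-product hypotheses of Lemma \ref{inK} would force $\deg_G(23)\geqslant 2$, contradicting $\deg_G(23)=1$ read from $D(G)$ (the seventh coordinate, corresponding to the prime $23$). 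The same with $89$ (also of degree $1$). Once $K$ avoids $23$ and $89$, and since $|\bar G/S|$ divides $|{\rm Out}(S)|$ and cannot absorb both primes by Theorem \ref{thm-1111}(2), both $23$ and $89$ must divide $|S|$. Then Lemma \ref{43}(2) yields $S\cong L_{11}(2)$, and comparing orders $|G|=|L_{11}(2)|=|S|$ forces $K=1$ and $\bar G/S=1$, whence $G\cong L_{11}(2)$.

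The main obstacle I anticipate is the bookkeeping in the $K$ is a $\{23,89\}'$-group step, specifically verifying the precise hypotheses of Lemma \ref{inK}: one must check $23\not\equiv 1\pmod q$ and $q\not\equiv 1\pmod{23}$ (and likewise for $89$) for the auxiliary prime $q$, and exhibit an actual subgroup of order $23q$ (resp. $89q$) of the appropriate shape, i.e.\ confirm $|G_{23}G_q|=23q$. Since $23$ and $89$ are isolated in the small component and each has degree exactly $1$ in $D(G)$, one has very little room: the single neighbour of each must be accounted for, and the congruence conditions need to be matched against the actual prime set $\pi(G)=\{2,3,5,7,11,17,23,31,73,89,127\}$. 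I would treat this carefully but expect it to go through exactly as in the $L_{10}(2)$ case, where $\{11,73\}$ played the role now played by $\{23,89\}$.
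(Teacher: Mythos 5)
Your proposal matches the paper's proof step for step: establish non-solvability and $t(2,G)\geqslant 2$, invoke Theorem \ref{thm-1111} to get $S\leqslant G/K\leqslant {\rm Aut}(S)$, use Lemma \ref{inK} together with $\deg(23)=\deg(89)=1$ to show $K$ is a $\{23,89\}'$-group and hence $\{23,89\}\subseteq\pi(S)$, then apply Lemma \ref{43}(2) and compare orders. The only (harmless) deviations are that the paper derives $t(G)\geqslant 3$ from Lemma \ref{r-result}(a) rather than from the Caro--Wei bound (your sum $\approx 2.65$ does work), and that $\deg_G(2)=7=|\pi(G)|-4$ rather than $|\pi(G)|-3$, which still yields $t(2,G)\geqslant 2$.
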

\begin{proof} First of all, it follows from Lemma \ref{r-result} $(a)$, $t(G)\geqslant 3$ and $G$ is a
non-solvable group. Moreover, since $\deg_G(2)=|\pi(G)|-4=7$,
$t(2,G)\geqslant 2$. Let $K$ be the maximal normal solvable
subgroup of $G$. Then, by Theorem \ref{thm-1111}, there exists a
finite non-abelian simple group $S$ such that $S\leq G/K\leq {\rm
Aut}(S)$. Moreover, one can easily see that $K$ is a $\{23,
89\}'$-group, which follows directly from Lemma \ref{inK} and the
facts that $\deg(23)=\deg(89)=1$. Now, it is clear that $|S|$ is
divisible by $23$ and $89$. Using Lemma \ref{43} $(2)$, it
follows that $S\cong L_{11}(2)$. Finally, since
$L_{11}(2)\leqslant G/K \leqslant {\rm Aut}(L_{11}(2))$ and
$|G|=|L_{11}(2)|$, we deduce that $|K|=1$ and $G\cong L_{11}(2)$.
\end{proof}


\subsection{On the Automorphism Group of $L_n(2)$}
It is known (see \cite[Theorem 2.5.12]{GorLS}) that, the group of
outer automorphisms of a simple group of Lie type is generated by
the diagonal automorphisms, the graph automorphisms (of the
underlying Dynkin diagram), and the field automorphisms of the
field of definition. Especially, for $S=L_n(q)$, with $n\geqslant
2$ and $q=p^f$, we have (see also \cite{atlas}):
$$|{\rm Out}(S)|=(n, q-1)\cdot f\cdot 2.$$
Therefore, the only outer automorphism of simple group $L_n(2)$,
$n\geq 3$, is the graph automorphism of order $2$, corresponds to
the symmetry of its Dynkin diagram. We denote by $\sigma$ this
automorphism and set $L:=L_n(2)$. Then, we have ${\rm
Aut}(L)=L\cdot \langle \sigma \rangle$, and so $|{\rm
Aut}(L):L|=2$. The following general results may be stated:

\begin{lm}\label{aut-1}
Let $S$ be a simple group with $|{\rm Aut}(S):S|=2$. Then there
holds: $${\rm GK}({\rm Aut}(S))-\{2\}={\rm GK}(S)-\{2\}.$$ In
particular, if $r\in \pi(S)-\{2\}$, then $\deg_S(r)\leqslant
\deg_{{\rm Aut}(S)}(r) \leqslant \deg_S(r)+1,$ and in addition,
if $2\sim r$ in ${\rm GK}(S)$, then $\deg_{{\rm
Aut}(S)}(r)=\deg_S(r)$.
\end{lm}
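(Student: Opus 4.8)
The plan is to reduce everything to a single elementary observation: every element of odd order in ${\rm Aut}(S)$ already lies in $S$. Write $A := {\rm Aut}(S)$, so that $S\trianglelefteq A$ with $[A:S]=2$. Since $S$ is a non-abelian simple group its order is even, so $2\in\pi(S)$, and from $|A|=2|S|$ we get $\pi(A)=\pi(S)$; thus ${\rm GK}(A)$ and ${\rm GK}(S)$ have the same vertex set. Moreover, because $S\leqslant A$, any element of $S$ of order $pq$ is an element of $A$ of order $pq$, so every edge of ${\rm GK}(S)$ is an edge of ${\rm GK}(A)$; in particular $\deg_S(r)\leqslant\deg_A(r)$ for each $r\in\pi(S)$. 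The whole content of the lemma is therefore to control the extra edges that might appear in ${\rm GK}(A)$, and these can only involve the prime $2$.

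First I would establish the key claim: if $x\in A$ has odd order, then $x\in S$. Indeed, consider the cyclic subgroup $\langle x\rangle$. Its image in $A/S\cong\mathbb{Z}_2$ is $\langle x\rangle S/S\cong\langle x\rangle/(\langle x\rangle\cap S)$, so the index $[\langle x\rangle:\langle x\rangle\cap S]$ divides $2$. Since $|\langle x\rangle|=|x|$ is odd, this index must equal $1$, whence $\langle x\rangle\leqslant S$ and in particular $x\in S$.

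From this claim the graph equality follows at once. Let $r,s\in\pi(S)\setminus\{2\}$ be distinct. If $r\sim s$ in ${\rm GK}(A)$, then $A$ contains an element of order $rs$; this element has odd order, so by the claim it lies in $S$, and hence $r\sim s$ in ${\rm GK}(S)$. Combined with the reverse inclusion noted above, this yields ${\rm GK}(A)-\{2\}={\rm GK}(S)-\{2\}$, i.e. the two graphs agree on every edge not incident to $2$.

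It then remains to read off the degree statements. Fix $r\in\pi(S)\setminus\{2\}$. The odd neighbours of $r$ are the same in both graphs, so $\deg_A(r)$ and $\deg_S(r)$ can differ only according to whether the single edge $\{2,r\}$ is present; and since $S\leqslant A$, that edge occurs in ${\rm GK}(A)$ whenever it occurs in ${\rm GK}(S)$. Consequently $\deg_A(r)-\deg_S(r)\in\{0,1\}$, giving $\deg_S(r)\leqslant\deg_A(r)\leqslant\deg_S(r)+1$; and if $2\sim r$ already in ${\rm GK}(S)$, the edge $\{2,r\}$ contributes to both degrees, forcing $\deg_A(r)=\deg_S(r)$. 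There is essentially no serious obstacle here; the only point requiring care is the parity argument in the key claim, which is exactly where the hypothesis $[A:S]=2$ is used.
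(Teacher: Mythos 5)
Your proposal is correct and follows essentially the same route as the paper: both arguments rest on the single observation that every element of odd order in ${\rm Aut}(S)$ must already lie in $S$ (the paper phrases this as $\pi_e({\rm Aut}(S))\setminus\pi_e(S)$ containing only even numbers, proved via $x=\left(x^{m-1}\right)^{-1}\in S$, while you argue via the image of $\langle x\rangle$ in ${\rm Aut}(S)/S\cong\mathbb{Z}_2$ -- the same parity argument). The deduction of the graph equality away from the vertex $2$ and the resulting degree inequalities then proceeds identically in both versions.
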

\begin{proof} First of all, we note that $S\cong {\rm Inn}(S)\leqslant {\rm Aut}(S)$, and
so $\pi_e(S)\subseteq \pi_e({\rm Aut}(S))$ and ${\rm GK}(S)$ is a
subgraph of ${\rm GK}({\rm Aut}(S))$. We claim that $\pi_e({\rm
Aut}(S))\setminus \pi_e(S)$ is a subset of the set of even natural
numbers. Suppose $m\in \pi_e({\rm Aut}(S))\setminus \pi_e(S)$ is
an odd number. Then there exists $x\in {\rm Aut}(S)\setminus S$
such that $o(x)=m$. On the other hand, we have $x^{-1}=x^{m-1}
\in S$, since $|{\rm Aut}(S):S|=2$ and $m-1$ is even. Hence $x\in
S$ , which is a contradiction.

Notice that $\pi({\rm Aut}(S))=\pi(S)$. In what follows we claim
that if $p$ and $q$ are two odd primes such that $p\nsim q$ in
${\rm GK}(S)$, then $p\nsim q$ in ${\rm GK}({\rm Aut}(S))$. Assume
that the claim is false and $p\sim q$ in ${\rm GK}({\rm Aut}(S))$.
Then $S$ dose not contain an element of order $pq$, while from the
previous paragraph of the proof the automorphism group ${\rm
Aut}(S)$ has an element of order $2pq$, say $x$. Therefore $x^2\in
S$ and $o(x^2)=pq$, which is a contradiction.
\end{proof}

A sequence of non-negative integers $(a_1, a_2, \ldots, a_k)$ is
said to be {\em majorised} by another such sequence $(b_1, b_2,
\ldots, b_k)$ if $a_i\leqslant b_i$ for $1\leqslant i\leqslant k$.
A graph $\Gamma_1$ is {\em degree-majorised} by a graph
$\Gamma_2$ if $V(\Gamma_1)=V(\Gamma_2)$ and the {\em non-ascending
degree sequence} of $\Gamma_1$ is majorised by that of $\Gamma_2$.
By Lemma \ref{aut-1}, we have immediately the following:
\begin{corollary}\label{majorised-1}
Let $S$ be a simple group with $|{\rm Aut}(S):S|=2$. Then ${\rm
GK}(S)$ is degree-majorised by ${\rm GK}({\rm Aut}(S))$.
\end{corollary}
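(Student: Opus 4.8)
The plan is to reduce the corollary to the vertex-wise degree domination already established in Lemma \ref{aut-1}, and then to invoke a purely combinatorial monotonicity principle relating a pointwise comparison of degree functions to the majorisation of the sorted degree sequences. First I would record that the two graphs share the same vertex set: as observed in the proof of Lemma \ref{aut-1}, we have $\pi({\rm Aut}(S))=\pi(S)$, so that $V({\rm GK}(S))=V({\rm GK}({\rm Aut}(S)))$, which is the first requirement in the definition of degree-majorisation.

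Next I would show that $\deg_S(v)\leqslant \deg_{{\rm Aut}(S)}(v)$ for \emph{every} vertex $v\in\pi(S)$. For an odd prime $v=r$ this is exactly the left-hand inequality $\deg_S(r)\leqslant \deg_{{\rm Aut}(S)}(r)$ furnished by Lemma \ref{aut-1}. For $v=2$ I would argue separately: the proof of Lemma \ref{aut-1} shows $\pi_e(S)\subseteq \pi_e({\rm Aut}(S))$, so ${\rm GK}(S)$ is a subgraph of ${\rm GK}({\rm Aut}(S))$ on the same vertex set; hence every edge incident to $2$ in ${\rm GK}(S)$ is also an edge of ${\rm GK}({\rm Aut}(S))$, giving $\deg_S(2)\leqslant \deg_{{\rm Aut}(S)}(2)$. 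Thus the degree function of ${\rm GK}(S)$ is dominated pointwise by that of ${\rm GK}({\rm Aut}(S))$.

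Finally I would deduce the majorisation of the non-ascending degree sequences from this pointwise domination. Let $a_1\geqslant a_2\geqslant\cdots\geqslant a_k$ be the degrees of ${\rm GK}(S)$ in non-increasing order, and let $b_1\geqslant b_2\geqslant\cdots\geqslant b_k$ be those of ${\rm GK}({\rm Aut}(S))$. Fix an index $i$ and choose a set $T\subseteq\pi(S)$ of $i$ vertices realizing the $i$ largest ${\rm GK}(S)$-degrees, so that $\deg_S(v)\geqslant a_i$ for all $v\in T$. By the pointwise inequality, $\deg_{{\rm Aut}(S)}(v)\geqslant \deg_S(v)\geqslant a_i$ for every $v\in T$, whence ${\rm GK}({\rm Aut}(S))$ has at least $i$ vertices of degree at least $a_i$; consequently $b_i\geqslant a_i$. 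Since $i$ was arbitrary, $a_i\leqslant b_i$ for all $i$, which is precisely the asserted majorisation.

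The argument is essentially routine, and the bulk of the work has already been done in Lemma \ref{aut-1}. The only point requiring a moment's care is the passage in the last paragraph from a vertex-by-vertex comparison to a comparison of the \emph{sorted} sequences; this is the standard fact that pointwise domination of degree functions forces degree-majorisation, and the counting argument above settles it cleanly, so I do not anticipate any genuine obstacle.
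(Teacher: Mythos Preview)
Your argument is correct and is precisely the content the paper intends: the paper states the corollary as an immediate consequence of Lemma~\ref{aut-1} without further proof, and you have simply spelled out the routine passage from the pointwise inequality (and the subgraph observation already made in that lemma's proof) to the comparison of the sorted degree sequences. Nothing is missing or different in substance.
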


Hereinafter, we assume that $L:=L_n(2)$ with $n\in \{p, p+1\}$,
where $p$ is an odd prime. We list some elementary properties of
the automorphism group ${\rm Aut}(L)$ that are useful in the
following:
\begin{itemize}
\item $|{\rm Aut}(L)|=2\cdot |L|=2^{{n\choose
2}+1}(2^2-1)(2^3-1)\cdots(2^n-1)$ and $\pi({\rm Aut}(L))=\pi(L)$.
\item $s({\rm Aut}(L))=2$ (see \cite[Lemma 2.2]{d-m}).
\item $\pi_1({\rm Aut}(L))=\pi_1(L)$ and $\pi_2({\rm
Aut}(L))=\pi_2(L)=\pi(2^p-1)$. In fact, if $n=p$, then
$$C_L(\sigma)\cong PSO^+(p, 2) \ \mbox{ of order} \ \
2^{((p-1)/2)^2}(2^2-1)(2^4-1)\cdots(2^{p-1}-1),$$ and if $n=p+1$,
then
$$C_L(\sigma)\cong PSp(p+1, 2) \ \mbox{ of order} \ \
2^{((p+1)/2)^2}(2^2-1)(2^4-1)\cdots(2^{p-1}-1)(2^{p+1}-1),$$ (see
\cite[19.9]{ash-seitz}). Therefore, if $q\in {\rm ppd}(2^p-1)$,
then $q\nsim 2$ in ${\rm GK}({\rm Aut}(L))$. Moreover, by Lemma
\ref{aut-1} and the fact that $\pi_2(L)=\pi(2^p-1)$, $q$ is not
adjacent to any odd primes in $\pi_1(L)\setminus \pi(2^p-1)$.
\end{itemize}

In the sequel, we will show that the automorphism group of linear
groups $L_{p}(2)$ and $L_{p+1}(2)$, where $2^p-1$ is a Mersenne
prime, are uniquely determined through their orders and degree
patterns. We start with the following lemmas.
\begin{lm}\label{m} Let $n\geqslant 3$ be an integer and
$L=L_n(2)$.Then there hold.
\begin{itemize}
\item[{$(1)$}] If $n\geqslant 12$ is even, then $(2^k-1)^2$, $2\leqslant k\leqslant n$, does not divide
the order of ${\rm Aut}(L)$ if and only if $k=\frac{n}{2}+i$,
$i=1, 2, \ldots, \frac{n}{2}$.
\item[{$(2)$}] If $n\geqslant 13$ is odd, then $(2^k-1)^2$, $2\leqslant k\leqslant n$, does not divide
the order of ${\rm Aut}(L)$ if and only if $k=\frac{n-1}{2}+i$,
$i=1, 2, \ldots, \frac{n+1}{2}$.
\item[{$(3)$}] If $n\leqslant 11$, then $(2^{k}-1)^2$ does not divide
the order of ${\rm Aut}(L)$ if and only if one of the following
statements holds:

\subitem {$(3.1)$} $n=11$ and $k=7, 8, 9, 10, 11$.

\subitem {$(3.2)$} $n=10$ and $k=7, 8, 9, 10$.

\subitem {$(3.3)$} $n=9$ and $k=5, 7, 8, 9$.

\subitem {$(3.4)$} $n=8$ and $k=5, 7, 8$.

\subitem {$(3.5)$} $n=7$ and $k=4, 5, 7$.

\subitem {$(3.6)$} $n=6$ and $k=4, 5$.

\subitem {$(3.7)$} $n=5$ and $k=3, 4, 5$.

\subitem {$(3.8)$} $n=4$ and $k=3, 4$.

\subitem {$(3.9)$} $n=3$ and $k=2, 3$.
\end{itemize}
\end{lm}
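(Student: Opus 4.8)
The plan is to understand exactly what is being claimed and then exploit the explicit, completely transparent structure of $|\mathrm{Aut}(L)| = 2\cdot|L_n(2)|$. Recall from the excerpt that
$$|\mathrm{Aut}(L)| = 2^{\binom{n}{2}+1}\prod_{i=2}^{n}(2^i-1).$$
Because the only difference between $|L|$ and $|\mathrm{Aut}(L)|$ is one extra factor of $2$, the odd part is identical; hence the condition ``$(2^k-1)^2 \mid |\mathrm{Aut}(L)|$'' is equivalent to ``$(2^k-1)^2 \mid |L|$'' whenever $k\geq 2$ (so that $2^k-1$ is odd), and the whole question reduces to a divisibility computation on the explicit product $\prod_{i=2}^n(2^i-1)$.

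The key technical step is to track, for each prime $r$ dividing $2^k-1$, how many times $r$ appears in $\prod_{i=2}^n(2^i-1)$. Writing $r\in\mathrm{ppd}(2^d-1)$ for the (multiplicative) order $d$ of $2$ modulo $r$, the prime $r$ divides $2^i-1$ precisely when $d\mid i$, and by Lemma~\ref{exp-element} the extra powers of $r$ come only from the $r$-part of the multiplier $i/d$. Thus $(2^k-1)^2 \nmid |\mathrm{Aut}(L)|$ means there is at least one prime $r \mid 2^k-1$ whose total exponent in the product falls below twice its exponent in $2^k-1$. The cleanest sufficient condition is that $2^k-1$ has a primitive prime divisor $r \in \mathrm{ppd}(2^k-1)$ (which exists by Zsigmondy, Theorem~\ref{zsig}, except $k=6$) such that $r$ appears to the first power only — i.e. no multiple $2d, 3d,\dots$ of $d=k$ lies in the range $[2,n]$ and $k$ itself contributes only a single factor. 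Concretely, for $k > n/2$ there is no index $i\leq n$ with $i$ a proper multiple of $k$, so a primitive prime divisor of $2^k-1$ appears exactly once; this is precisely the regime $k=\frac{n}{2}+i$ (for even $n$) and $k=\frac{n-1}{2}+i$ (for odd $n$) named in parts $(1)$ and $(2)$. The converse direction — that for $k\leq n/2$ the square $(2^k-1)^2$ \emph{does} divide the order — follows because $k$ and $2k$ are both in $[2,n]$, giving a primitive prime of $2^k-1$ at least to the second power, together with a check that every prime dividing $2^k-1$ similarly accumulates enough multiplicity.

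The main obstacle I anticipate is the delicate bookkeeping in the small-$n$ cases, parts $(3.1)$ through $(3.9)$, where the clean ``$k>n/2$'' heuristic breaks down because of the Zsigmondy exceptions and because of accidental coincidences among the small factors $2^2-1=3$, $2^3-1=7$, $2^4-1=15=3\cdot 5$, $2^6-1=63=3^2\cdot 7$, and so on — here the repeated appearance of small primes like $3$ and $7$ across several $2^i-1$ can either create or destroy a square. For instance, $k=6$ has \emph{no} primitive prime divisor, so whether $(2^6-1)^2=63^2$ divides the order must be decided by tracking the $3$-part and $7$-part by hand through the whole product; this is exactly why $k=6$ appears in some lists (e.g. $n=7,8$) but the surrounding entries require case-by-case verification rather than the uniform rule. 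My plan is therefore to prove $(1)$ and $(2)$ by the multiplicity argument above, and then to dispose of $(3)$ by direct computation of the $2$-adic-free exponent vectors, organized by listing for each small $n$ the set of $k\in\{2,\dots,n\}$ for which some prime divisor of $2^k-1$ occurs with multiplicity less than $2$ in $\prod_{i=2}^n(2^i-1)$, and checking that this set matches the stated values.
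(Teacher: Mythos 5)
Your proposal is correct and follows essentially the same route as the paper: for $k>n/2$ one takes a Zsigmondy primitive prime divisor $s$ of $2^k-1$ with $s^m\parallel 2^k-1$ and observes that no proper multiple of $k$ lies in $[2,n]$, so $s^m\parallel|{\rm Aut}(L)|$ and the square cannot divide, while for $k\leqslant n/2$ the divisibility $2^k-1\mid 2^{2k}-1$ gives $(2^k-1)^2\mid(2^k-1)(2^{2k}-1)$, with part $(3)$ left to direct verification exactly as in the paper. One small slip in your aside: $k=6$ in fact appears in \emph{none} of the lists in part $(3)$ (e.g.\ $(2^6-1)^2=3^4\cdot 7^2$ already divides $|L_7(2)|=2^{21}\cdot 3^4\cdot 5\cdot 7^2\cdot 31\cdot 127$), but the exponent-vector computation you plan would correct this automatically.
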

\begin{proof}
Since the proofs of $(1)$ and $(2)$ are similar, only the proof
for $(1)$ is presented. The proof of $(3)$ is a straightforward
verification. First of all, we recall that  \[  |{\rm
Aut}(L)|=2\cdot |L|=2^{{n\choose 2}+1}\prod_{i=2}^{n}(2^i-1),
\] because $|{\rm Out}(L)|=2$. Moreover, if $s\in {\rm
ppd}(2^k-1)$, then $s|2^l-1$ if and only if $k$ divides $l$ (see
the proof of Proposition 2.1 in \cite{vv}). Assume first that
$k\geqslant \frac{n}{2}+1\geqslant 7$. Applying Theorem
\ref{zsig}, we can consider a primitive prime divisor $s\in {\rm
ppd}(2^{k}-1)$, and suppose that $s^m\parallel 2^{k}-1$. As we
mentioned before, if $s|2^l-1$, then $k$ divides $l$, and hence
$l\geqslant 2k\geqslant n+2$, which means that $(s, |{\rm
Aut}(L)|/(2^{k}-1))=1$, and so $s^m\parallel |{\rm Aut}(L)|$.
Hence, if $(2^{k}-1)^2$ divides $|{\rm Aut}(L)|$ then we must
have $s^{2m}\big | |{\rm Aut}(L)|$, which is a contradiction.
Assume next that $k\leqslant \frac{n}{2}$. In this case
$2k\leqslant n$, and since $2^k-1|2^{2k}-1$, it follows that
$(2^k-1)^2\big | |{\rm Aut}(L)|$. This completes the proof of
$(1)$.
\end{proof}

\begin{lm}\label{independent-3} Let $2^p-1\geqslant 31$ be a
Mersenne prime and $L\in \{L_p(2), L_{p+1}(2)\}$. Suppose that
$G$ is a finite group satisfies the conditions: $|G|=|{\rm
Aut}(L)|$  and $D(G)=D({\rm Aut}(L))$. Then $t(G)\geqslant 3$. In
particular, $G$ is a non-solvable group.
\end{lm}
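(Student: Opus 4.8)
The plan is to build, using only the degree pattern, an explicit independent set of size three in ${\rm GK}(G)$; then $t(G)\geqslant 3$ and non-solvability follows from Lemma \ref{l-independence}. Since $|G|=|{\rm Aut}(L)|$ and $D(G)=D({\rm Aut}(L))$, the prime sets coincide and $\deg_G(t)=\deg_{{\rm Aut}(L)}(t)$ for every $t\in\pi(G)$; write $h=|\pi(G)|$ and let $q=2^p-1$ be the given Mersenne prime. From the properties of ${\rm Aut}(L)$ recorded above, $s({\rm Aut}(L))=2$ with $\pi_2({\rm Aut}(L))=\pi(2^p-1)=\{q\}$, so $q$ is an isolated vertex of ${\rm GK}({\rm Aut}(L))$ and hence $\deg_G(q)=0$.

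The decisive step is to exhibit two odd primes $r,s\in\pi_1({\rm Aut}(L))$ that are non-adjacent in ${\rm GK}({\rm Aut}(L))$. By Lemma \ref{aut-1} adjacency of odd primes is unchanged between ${\rm GK}(L)$ and ${\rm GK}({\rm Aut}(L))$, so I would locate $r,s$ inside ${\rm GK}(L)$ via the criterion of Lemma \ref{criterion}. For $p\neq 7$ I would take $r\in{\rm ppd}(2^{p-2}-1)$ and $s\in{\rm ppd}(2^{p-1}-1)$, both non-empty by Theorem \ref{zsig}; the exponents satisfy $(p-2)+(p-1)=2p-3>n$ and $p-2\nmid p-1$ for $p\geqslant 5$, so $r\nsim s$ by Lemma \ref{criterion}$(2)$, and since both exponents are $\leqslant p-1$ the primes lie in $\pi_1$ for $n=p$ and for $n=p+1$ alike. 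The one obstacle is the Zsigmondy exception $2^{6}-1$, which makes ${\rm ppd}(2^{p-1}-1)$ empty precisely when $p=7$; in that case I would instead take $r\in{\rm ppd}(2^{4}-1)=\{5\}$ and $s\in{\rm ppd}(2^{5}-1)=\{31\}$, for which $4+5=9>n$ and $4\nmid 5$, so again $r\nsim s$ with $r,s\in\pi_1$. In either case $r$ is non-adjacent in ${\rm GK}({\rm Aut}(L))$ to the two distinct vertices $q$ (isolated) and $s$, whence $\deg_G(r)=\deg_{{\rm Aut}(L)}(r)\leqslant h-3$.

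Finally I would assemble the independent set. As $\deg_G(q)=0$, the prime $q$ is non-adjacent to all other vertices of ${\rm GK}(G)$; as $\deg_G(r)\leqslant h-3$, the prime $r$ has at least two non-neighbours, one of which is $q$, so a second non-neighbour $v\neq q$ exists. Then $\{q,r,v\}$ is independent in ${\rm GK}(G)$: the edges $qr$ and $qv$ are absent because $q$ is isolated, and $rv$ is absent by the choice of $v$; hence $t(G)\geqslant 3$ and $G$ is non-solvable by Lemma \ref{l-independence}. Equivalently, since $q\in\Omega_0(G)$ and $1\leqslant\deg_G(r)\leqslant h-3$ (the lower bound because $2\sim r$ by Lemma \ref{criterion}$(1)$, the exponent of $r$ being at most $n-2$), one may simply quote Lemma \ref{non-solvablility}. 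The only real work, and the point I would be most careful about, is the case analysis isolating $p=7$ in the decisive step.
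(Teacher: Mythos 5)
Your proof is correct and follows essentially the same route as the paper's: both exhibit $2^p-1$ as an isolated vertex and then produce a second vertex of degree at most $|\pi(G)|-3$ using primitive prime divisors, the adjacency criterion of Lemma \ref{criterion}, and the transfer to ${\rm Aut}(L)$ via Lemma \ref{aut-1}, before concluding with Lemma \ref{non-solvablility}. Your uniform choice of $r\in{\rm ppd}(2^{p-2}-1)$ against ${\rm ppd}(2^{p-1}-1)$ (with the $p=7$ Zsigmondy exception handled separately) is a minor variant of the paper's argument, which instead reads off the cases $p=5,7$ from Table 2 and, for $p\geqslant 13$, counts the non-neighbours of a prime in ${\rm ppd}(2^{p-1}-1)$.
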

\begin{proof}
We recall that $\deg_G(2^p-1)=0$, and so $\Omega_0(G)\neq
\emptyset$. To complete the proof, from Lemma
\ref{non-solvablility}, it is enough to show that $\Omega_i(G)\neq
\emptyset$ for some $1\leqslant i\leqslant |\pi(G)|-3$. If $p=5$
(resp. $7$), then $\deg_{L_5(2)}(5)=1$ and $\deg_{L_6(2)}(5)=2$
(resp. $\deg_{L_7(2)}(31)=2$ and $\deg_{L_8(2)}(17)=2$). Hence,
by Lemma \ref{aut-1}, we have
$$\begin{array}{ll} L=L_5(2) & \deg_G(5)=\deg_{{\rm Aut}(L)}(5)\leqslant \deg_L(5)+1=2=|\pi(G)|-3,\\[0.2cm]
L=L_6(2) & \deg_G(5)=\deg_{{\rm Aut}(L)}(5)=\deg_L(5)=2=|\pi(G)|-3,\\[0.2cm]
&  \ \  \mbox{(note that $2\sim 5$ in ${\rm GK}(L)$),}\\
L=L_7(2) & \deg_G(31)=\deg_{{\rm Aut}(L)}(31)\leqslant  \deg_L(31)+1=3=|\pi(G)|-3,\\[0.2cm]
L=L_8(2) & \deg_G(17)=\deg_{{\rm Aut}(L)}(17)\leqslant
\deg_L(17)+1=3=|\pi(G)|-3,
\end{array} $$ as required.

Therefore, we may assume that $p\geqslant 13$. In this case, we
consider a primitive prime divisor of $2^{p-1}-1$, say $r$. By
Lemma \ref{criterion}, one can easily see that $r\nsim s$ in
${\rm GK}(L)$, for each $$s\in \bigcup _{i=1}^{\frac{p-3}{2}}{\rm
ppd}(2^{\frac{p-1}{2}+i}-1).$$ Hence, by Theorem \ref{zsig}, we
obtain $$\deg_L(r)\leqslant |\pi(L)|-|\bigcup
_{i=1}^{\frac{p-3}{2}}{\rm ppd}(2^{\frac{p-1}{2}+i}-1)|-1\leqslant
|\pi(L)|-\frac{p-3}{2}-1\leqslant |\pi(L)|-6,
$$ because $p\geqslant 13$. Finally, we conclude that
$$\deg_G(r)=\deg_{{\rm Aut}(L)}(r)\leqslant
\deg_L(r)+1\leqslant |\pi(L)|-5=|\pi(G)|-5,
$$ as required.
\end{proof}

We are now ready to prove our main result.

\begin{theorem}\label{aut-3} Let $2^p-1$ be a Mersenne prime and $L\in \{L_p(2), L_{p+1}(2)\}$. Suppose that
$G$ is a finite group satisfies the conditions: $|G|=|{\rm
Aut}(L)|$  and $D(G)=D({\rm Aut}(L))$. Then $G$ is isomorphic to
${\rm Aut}(L)$.
\end{theorem}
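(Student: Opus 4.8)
The plan is to follow the same template established in the proofs of Theorems \ref{n10} and \ref{n11}, but now working inside ${\rm Aut}(L)$ rather than the simple group itself. First I would invoke Lemma \ref{independent-3} to conclude that $t(G)\geqslant 3$ and that $G$ is non-solvable. Since $\deg_G(2)\leqslant |\pi(G)|-3$ (the prime $2$ is never adjacent to primes in ${\rm ppd}(2^{n-1}-1)\cup{\rm ppd}(2^n-1)$, and for the automorphism group $2\nsim q$ for $q\in{\rm ppd}(2^p-1)$ as recorded in the bulleted properties of ${\rm Aut}(L)$), we get $t(2,G)\geqslant 2$. Hence by Theorem \ref{thm-1111} there is a maximal normal solvable subgroup $K$ of $G$ and a finite non-abelian simple group $S$ with $S\leqslant G/K\leqslant {\rm Aut}(S)$.

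The next step is to pin down $S$. The key arithmetic leverage is that $\pi_2(L)=\pi(2^p-1)=\{q\}$ is a singleton (because $2^p-1$ is a Mersenne prime), and $q$ is an isolated vertex: $\deg_G(q)=0$. I would argue, exactly as in Theorems \ref{n10} and \ref{n11} via Lemma \ref{inK}, that $K$ is a $q'$-group; indeed if $q\mid|K|$ then Frattini together with the existence of a $q$-element forcing adjacency would contradict $\deg_G(q)=0$. More delicately, I must also force a second large prime into $\pi(S)$ so that the order-divisibility list (an analogue of Lemma \ref{43}/Table 3, now for $|{\rm Aut}(L)|=2|L|$) isolates $S\cong L$. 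The natural candidates are primes whose degree is small (degree $1$ vertices, accessible via Lemma \ref{inK}), together with the primitive prime divisors of $2^{n-1}-1$ and $2^n-1$ that, by Lemma \ref{criterion} and Corollary \ref{coro-1}, survive only inside the linear group. Once $\{q, r\}\subseteq\pi(S)$ for a suitable such $r$, a classification step (the Mersenne-prime analogue of \cite{amr} already cited for $L_p(2)$, $L_{p+1}(2)$ OD-characterizability) should yield $S\cong L_p(2)$ or $L_{p+1}(2)$.

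Having identified $S\cong L$, I would then show $G/K\cong {\rm Aut}(L)$ and $|K|=1$. Since $S\cong L$ and $|{\rm Out}(S)|=2$, the chain $L\cong S\leqslant G/K\leqslant{\rm Aut}(S)={\rm Aut}(L)$ forces $|G/K|\in\{|L|, 2|L|\}$. The constraint $|G|=|{\rm Aut}(L)|=2|L|$ then gives either $|K|=1$ with $G/K={\rm Aut}(L)$, or $|K|=2$ with $G/K\cong L$. I would rule out the latter by a degree-pattern argument: if $K$ had even order, then $2\in\pi(K)$ and Lemma \ref{inK} (applied to $2$ against the degree-$1$ odd primes) would create adjacencies incompatible with $D(G)=D({\rm Aut}(L))$; the cleaner route is to observe that a central or normal $2$-subgroup $K$ of order $2$ would, together with odd $p$-elements of $G$, supply elements of order $2p$ and thereby raise $\deg_G$ of several odd primes beyond the prescribed pattern. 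This forces $|K|=1$ and hence $G\cong{\rm Aut}(L)$.

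The main obstacle I anticipate is the step forcing the second prime into $\pi(S)$ and then matching $G/K$ with the full automorphism group rather than with $L$ itself. In the simple-group cases (Theorems \ref{n10}, \ref{n11}) one had $|G|=|S|$, so $|K|=1$ was immediate; here $|G|=2|L|$ leaves room for $K$ of order $2$ or for an outer automorphism to be absorbed, and disentangling these requires careful use of the degree pattern together with the structure of $C_L(\sigma)$ recorded above (the centralizers $PSO^+(p,2)$ and $PSp(p+1,2)$), which control precisely which adjacencies the graph automorphism $\sigma$ introduces. Verifying that no solvable extension can reproduce $D({\rm Aut}(L))$ except the genuine one is where the bulk of the case-checking will lie, and Lemma \ref{m} on which squares $(2^k-1)^2$ divide $|{\rm Aut}(L)|$ will be the tool that distinguishes ${\rm Aut}(L)$ from spurious candidates.
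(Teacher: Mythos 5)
Your overall skeleton (non-solvability via Lemma \ref{independent-3}, a simple section $S\leqslant G/K\leqslant {\rm Aut}(S)$, then $|K|\in\{1,2\}$ with the $|K|=2$ case killed by a central involution creating forbidden adjacencies) matches the paper's endgame: the paper's Lemma \ref{automL} rules out $|K|=2$ by noting $K\leqslant Z(G)$ would give an element of order $2(2^p-1)$, contradicting $\deg_G(2^p-1)=0$, which is the same idea as yours. The real problem is the middle step, the identification $S\cong L$, which you correctly flag as the bulk of the work but for which you propose tools that do not exist for general $p$. There is no ``analogue of Lemma \ref{43}/Table 3 for $|{\rm Aut}(L)|=2|L|$'': Table 3 is a finite list computed for the two specific orders $|L_{10}(2)|$ and $|L_{11}(2)|$ from \cite{banoo-alireza, za}, and no such enumeration of all simple groups whose order divides $2|L_p(2)|$ is available for an arbitrary Mersenne exponent $p$. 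Likewise \cite{amr} proves OD-characterizability of $L_p(2)$ and $L_{p+1}(2)$ themselves; it does not classify the simple sections of ${\rm Aut}(L)$, so citing a ``Mersenne-prime analogue'' of it is circular hand-waving at exactly the point where the proof has to be done.

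What the paper actually does is qualitatively different and is forced by this obstacle. It first shows $\deg_G(3)=|\pi(G)|-2$ and $\deg_G(2^p-1)=0$, so $s(G)=2$ with order components $m_1$ and $m_2=2^p-1$, and then applies the Gruenberg--Kegel/Williams structure theorem (Proposition \ref{GK-W}) rather than Theorem \ref{thm-1111}. After excluding the Frobenius and $2$-Frobenius cases via $t(G)\geqslant 3$, it obtains a simple group $P$ with \emph{disconnected} prime graph whose odd order component equals $2^p-1$, and then runs through the Kondrat'ev--Mazurov tables of order components of all simple groups with $s(P)\geqslant 2$, eliminating every candidate except $L_p(2)$ and $L_{p+1}(2)$ by arithmetic on order components: Lemma \ref{m} on which $(2^k-1)^2$ divide $|{\rm Aut}(L)|$, Zsigmondy's theorem, $s$-part estimates via Lemma \ref{exp-element}, and Frobenius-action bounds of the form $2^p-1\leqslant |R|-1\leqslant 2^{p-2}-1$. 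This case analysis over $s(P)=2,3,4,5$ is the heart of the proof and is entirely absent from your proposal; without it, or some substitute of comparable strength, the argument does not go through.
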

\begin{proof} First of all, we consider the cases $p=2$ and $3$.
Indeed, if $L=L_2(2)\cong {\Bbb S}_3$, then ${\rm Aut}(L)\cong L$
and the result now follows by applying Theorem $1.2$ in
\cite{amr}. If $L=L_3(2)$, then ${\rm Aut}(L)={\rm PGL}(2, 7)$
and the result is proved  in \cite{zliu}. Finally, if
$L=L_4(2)\cong {\Bbb A}_8$, then ${\rm Aut}(L)\cong {\Bbb S}_8$
and the result follows from Theorem $1.5$ in \cite{mz}.

Therefore, we assume that $2^p-1$ is a Mersenne prime with
$p\geqslant 5$ and $L\in \{L_p(2), L_{p+1}(2)\}$. Let $G$ be a
finite group with $|G|=|{\rm Aut}(L)|$ and $D(G)=D({\rm
Aut}(L))$. Then $\pi(G)=\pi({\rm Aut}(L))=\pi(L)$, $2^p-1$ is the
largest prime in $\pi(G)$ and $\deg_G(2^p-1)=0$. Moreover, by
Corollary \ref{coro-1} (c), $\deg_G(3)=|\pi(G)|-2$, which forces
$s(G)=2$. More precisely, we have
$$\pi_1(G)=\pi_1(L) \ \ \mbox{and} \ \
\pi_2(G)=\{2^p-1\},$$ and since $G$ and ${\rm Aut}(L)$ have the
same order, we conclude that $${\rm OC}(G)={\rm OC}({\rm
Aut}(L))=\{m_1, m_2\},$$ where
$$m_1=\left\{\begin{array}{lllll} 2^{{p\choose
2}+1}(2^2-1)(2^3-1)\cdots(2^{p-1}-1) & & & \mbox{if} & L=L_p(2), \\[0.3cm]
2^{{p+1\choose 2}+1}(2^2-1)(2^3-1)\cdots(2^{p-1}-1)(2^{p+1}-1) & &
& \mbox{if} & L=L_{p+1}(2);
\end{array} \right.$$ and $$m_2=2^p-1.$$
Furthermore, by Proposition \ref{GK-W}, one of the following
cases holds:
\begin{itemize}
\item[{\em Case $1$.}] $G$ is either a Frobenius group or a $2$-Frobenius group;
\item [{\em Case $2$.}] There exists a non-abelian simple group $P$
such that $P\leqslant G/K\leqslant {\rm Aut}(P)$ for some
nilpotent normal $\pi_1(G)$-subgroup $K$ of $G$, and $G/P$ is a
$\pi_1(G)$-group. Moreover, $s(P)\geqslant 2$ and
$\pi_2(G)=\{2^p-1\}$.
\end{itemize}
In what follows, we will consider every case separately.
\begin{lm} Case $1$ is impossible.
\end{lm}
\begin{proof} First of all, by Lemma \ref{independent-3}, $G$ is a non-solvable group.
Hence, $G$ is not a $2$-Frobenius group. Assume now that $G$ is a
Frobenius group with kernel $K$ and complement $C$. Then by
Proposition \ref{frob}, ${\rm OC}(G)=\{|K|, |C|\}$. From $|C|<|K|$
we can easily conclude that $|K|=m_1$ and $|C|=m_2=2^p-1$. But
then, the degree pattern of $G$ has the following form:
$$D(G)=(n-2, n-2, \ldots, n-2, 0),$$
where $n=|\pi(G)|$, and hence $t(G)=2$, which contradicts Lemma
\ref{independent-3}.
\end{proof}

Thus Case 2 holds, that is, there exists a non-abelian simple
group $P$ such that $$P\leqslant G/K\leqslant {\rm Aut}(P),$$ for
some nilpotent normal $\pi_1(G)$-subgroup $K$ of $G$, and $G/P$
is a $\pi_1(G)$-group. Evidently $\pi_2(P)=\{2^p-1\}$ and
$\pi_e(P)\subseteq \pi_e(G/K)\subseteq \pi_e(G)$. Therefore, for
every prime $r\in \pi(P)$, we have $\deg_P(r)\leqslant \deg_G(r)$.

\begin{lm}\label{isomorL} $P$ is isomorphic to $L$.
\end{lm}
\begin{proof} According to the classification of the finite simple groups we
know that the possibilities for $P$ are: alternating groups
$\mathbb{A}_m$, $m\geqslant 5$; 26 sporadic finite simple groups;
simple groups of Lie type. We deal with the above cases
separately. We will use the results summarized in Tables 1, 2 and
3 in \cite{km}.

First, suppose $P$ is an alternating group $\mathbb{A}_m$,
$m\geqslant 5$. Since $2^p-1\in \pi(P)$, $m\geqslant 2^p-1$. Now,
we consider a prime $r$ between $2^{p-1}-1$ and $2^p-1$. It is
clear that $ r\in\pi(\mathbb{A}_m)\backslash \pi(G)$, but this is
impossible.

Next, suppose $P$ is a sporadic simple group. Since the odd order
components of a sporadic simple group are prime less than 71, it
follows that $2^p-1<71$. Hence we obtain that $p=3$ or 5. Using
the results summarized in Tables 1, 2 and 3 in \cite{km}, we see
that $P$ cannot isomorphic to any sporadic simple group.

Finally, suppose $P$ is a simple group of Lie type. Here,
according to the number of the prime graph components of $P$, we
proceed case by case analysis.

\begin{ca} \  $s(P)=2$.
\end{ca}
In this case we have $m_2(P)=2^p-1$.
\begin{itemize}
\item[{$(1)$}] {\em The simple group $P$ is isomorphic to none of the
simple groups $C_n(q)$, $n=2^m\geqslant 2$; $D_r(q)$, $r\geqslant
5$, $q=2, 3, 5$; $D_{r+1}(q)$, $q=2, 3$;  $F_4(q)$, $q$ odd;
$G_2(q)$, $q\equiv \pm 1\pmod{3}$; $^2D_r(3)$, $r\geqslant 5$,
$r\neq 2^n+1$; $^2D_n(2)$, $n=2^m+1\geqslant 5$; $^2D_n(3)$,
$9\leqslant n=2^m+1 \neq r$; $^3D_4(q)$, $C_r(3)$, $B_r(3)$;
$B_n(q)$, $n=2^m\geqslant 4$, $q$ odd, $^2A_3(2)$ and
$^2F_4(2)'$.}

\subitem {(1.1)} \ If $P\cong C_n(q)$, $n=2^m\geqslant 2$, then
\[ |P|=|C_n(q)|=m_1\times m_2=q^{n^2}(q^n-1)\prod_{i=1}^{n-1}(q^{2i}-1)\times \frac{q^n+1}{2}.
\]
Because $\frac{q^n+1}{2}=2^p-1$, it implies that
$q^n-1=4(2^{p-1}-1)$. Evidently $p\neq 7$. On the other hand,
since $(q^n-1)^2$ divides $|P|$, thus $(2^{p-1}-1)^2$ must divides
$|G|$. This is a contradiction by Lemma \ref{m}.

\subitem {(1.2)} \  If $P\cong D_r(q)$, $r\geqslant 5$, $q=2, 3,
5$, then
\[ |P|=|D_r(q)|=m_1\times m_2=q^{r(r-1)}\prod_{i=1}^{r-1}(q^{2i}-1)\times \frac{q^r-1}{(q-1,4)} \ .
\]
In this case we have $\frac{q^r-1}{(q-1,4)}=2^p-1$. If $q=2$,
then $2^r-1=2^p-1$, and hence $r=p$. Thus $2^{p(p-1)}$ divides
$|P|$ and so $|G|$, which is a contradiction. If $q=3$, then we
obtain $2^2(2^{p-1}-1)=3(3^{r-1}-1)$ and if $q=5$ then we get
$2^3(2^{p-1}-1)=5(5^{r-1}-1)$. In both cases, we easy to see that
$(2^{p-1}-1)^2\big ||G|$, which contradicts Lemma \ref{m}.

\subitem {(1.3)} \ If $P\cong F_4(q)$, $q$ odd, then we have
\[|P|=|F_4(q)|=m_1\times m_2=q^{24}(q^4-1)(q^6-1)^2(q^8-1)\times (q^{4}-q^2+1).\]
Now, from $q^4-q^2+1=2^p-1$ we deduce that
$2(2^{p-1}-1)=q^2(q^2-1)$. But then, we have $(2^{p-1}-1)^2\big |
|G|$, which is again a contradiction by Lemma \ref{m}.

The other cases are settled similarly.

\item[{$(2)$}] {\em The simple group $P$ is isomorphic to none of the
simple groups $^2D_n(q)$, $n=2^m\geqslant 4$, and $C_r(2)$.}

\subitem {(2.1)} \ If $P\cong {^2D_n(q)}, n=2^m\geqslant 4$, then
\[ |P|=|^2D_n(q)|=m_1\times m_2=q^{n(n-1)}\prod_{i=1}^{n-1}(q^{2i}-1)\times \frac{q^n+1}{(2,q+1)}. \]
Moreover, we have $\frac{q^n+1}{(2, q+1)}=2^p-1$. We now consider
two cases separately.

$(a)$ $(2, q+1)=1$. In this case, we get $q^n=2(2^{p-1}-1)$, an
impossible.

$(b)$ $(2, q+1)=2$. In this case, we obtain $q^n-1=4(2^{p-1}-1)$,
and since $(q^n-1)^2$ divides $|P|$, it follows that
$(2^{p-1}-1)^2$ divides $|G|$, which is impossible by Lemma
\ref{m}.

\subitem {(2.2)} \ If $P\cong C_r(2)$, then
\[ |P|=|C_r(2)|=m_1\times m_2=2^{r^2}(2^r+1)\prod_{i=1}^{r-1}(2^{2i}-1)\times (2^r-1). \]
From $2^r-1=2^p-1$, it follows $r=p$. But then, we must have
$2^{p^2}\big ||G|$, which is a contradiction.

\item[{$(3)$}] {\em The simple group $P$ is isomorphic to none of the
simple groups $A_{r-1}(q)\cong L_r(q)$, $(r,q)\neq (3,2), (3,4)$;
$A_r(q)\cong L_{r+1}(q)$, $q-1|r+1$; $^2A_{r-1}(q)$ and
$^2A_r(q)$, $q+1|r+1$, $(r, q)\neq (3,3), (5,2)$, where $r$ is an
odd prime.}

Since the proofs of all cases are similar, only the proofs for
the simple groups $A_{r-1}(q)$, $(r,q)\neq (3,2), (3,4)$ and
$A_r(q)$ with $q-1|r+1$, are presented.

\subitem {(3.1)} \ If $P\cong A_{r-1}(q)\cong L_{r}(q)$,  $(r,
q)\neq (3,2), (3,4)$, then
$$|P|=m_1\times m_2=q^{r\choose 2}\prod_{i=1}^{r-1}(q^i-1)\times \frac{q^r-1}{(r, q-1)(q-1)},$$
and
$$\frac{q^r-1}{(r, q-1)(q-1)}=2^p-1.$$
Let $q=s^f$. If $s=2$ and $f>1$, then
$$2^{fr}-1\gneqq \frac{2^{fr}-1}{(r, 2^f-1)(2^f-1)}=2^p-1.$$
Since $2^{fr}-1$ divides $|P|$, and so $|G|$, we get a
contradiction by Theorem \ref{zsig}. In the case $s=2$ and $f=1$,
we obtain that $r=p$, and so $P\cong A_{p-1}(2)\cong L_p(2)$, as
required.

In the sequel we assume that $s$ is an odd prime. First of all,
we have
$$q^r-1\gneqq \frac{q^r-1}{(r, q-1)(q-1)}=2^p-1,$$
or equivalently $q^r>2^p$. Since $s\in \pi(G)$, we assume that
$s\in {\rm ppd}(2^k-1)$, for some $k$.  Let $(2^k-1)_s=s^m$,
where $m$ is a natural number and
$$a:=(2^k-1)(2^{2k}-1)\cdots (2^{[\frac{p-1}{k}]k}-1),$$
(Note that $k$ divides $p-1$, and so
$[\frac{p-1}{k}]=\frac{p-1}{k}$). Then, by Lemma
\ref{exp-element}, we have
$$\begin{array}{lll}a_s&=&\prod\limits_{l=1}^{\frac{p-1}{k}}(2^{kl}-1)_s=
\prod\limits_{l=1}^{\frac{p-1}{k}}l_s(2^k-1)_s=
\prod\limits_{l=1}^{\frac{p-1}{k}}l_ss^m=s^{\frac{p-1}{k}m}
\prod\limits_{l=1}^{\frac{p-1}{k}}l_s\\[0.3cm]
& =
&s^{\frac{p-1}{k}m}\left(\prod\limits_{l=1}^{\frac{p-1}{k}}l\right)_s=
s^{\frac{p-1}{k}m}((\frac{p-1}{k})!)_s= s^{\frac{p-1}{k}m}\cdot
s^{\sum_{j=1}^{\infty}[\frac{p-1}{ks^j}]}=s^{\frac{p-1}{k}m+
\sum_{j=1}^{\infty}[\frac{p-1}{ks^j}]},\end{array}$$ and since
$|G|_s=a_s$, it follows that
\begin{equation}\label{e2} s^{f\frac{r(r-1)}{2}}=|P|_s\leqslant |G|_s=
s^{\frac{p-1}{k}m+\sum_{j=1}^{\infty}[\frac{p-1}{ks^j}]}.
\end{equation}
On the other hand, we have
$$\sum_{j=1}^{\infty}\left[\frac{p-1}{ks^j}\right]\leqslant
\sum_{j=1}^{\infty}\frac{p-1}{ks^j}=\frac{p-1}{k}
\sum_{j=1}^{\infty}\frac{1}{s^j}=\frac{p-1}{k}\cdot
\frac{1}{s-1}\leqslant  \frac{p-1}{k}.
$$
If this is substituted in $(\ref{e2})$ and noting that $q^r>2^p$,
then we obtain
$$\begin{array}{lll} s^{f\frac{r(r-1)}{2}} & \leqslant &
s^{\frac{p-1}{k}(m+1)}=(s^m)^{\frac{p-1}{k}}\cdot s^{\frac{p-1}{k}}\\[0.3cm]
& < & (2^k-1)^{\frac{p-1}{k}}\cdot
(2^k-1)^{\frac{p-1}{k}}=(2^k-1)^{2\frac{p-1}{k}}
\\[0.3cm]
& < & (2^k)^{2\frac{p-1}{k}}=2^{2(p-1)}<(2^p)^2<(q^r)^2=s^{2fr},
\end{array}$$
which implies that $\frac{r(r-1)}{2}<2r$, and so $r=3$. Thus
$P\cong L_3(q)$, $q=s^f\neq 2, 4$, and
\begin{equation}\label{eee3}
\frac{q^3-1}{(3, q-1)(q-1)}=2^p-1.\end{equation} Note that $|{\rm
Out}(P)|=(3, q-1)\cdot f\cdot 2$, and hence
$$|{\rm Aut}(P)|=|P|\cdot |{\rm Out}(P)|=q^2(q^2-1)(q^3-1)\cdot
f\cdot 2.$$ Moreover, subtracting 1 from both sides of Eq.
$(\ref{eee3})$ and easy computations show that
$$(q-1)(q+2)=\left\{\begin{array}{lll} 4(2^{p-2}-1) & {\rm if} & (3, q-1)=1,\\[0.3cm]
6(2^{p-1}-1) & {\rm if} & (3, q-1)=3. \end{array}\right.$$ In
what follows, we will consider two cases separately.

\subsubitem {{\em Case} 1.} $(3, q-1)=1$. Let $t\in {\rm
ppd}(2^{p-2}-1)$ and $(2^{p-2}-1)_t=t^m$. Since $(q-1, q+2)=1$,
we conclude that $(q-1)_t=t^m$ or $(q+2)_t=t^m$. If
$(q-1)_t=t^m$, then since $(q-1)^2$ divides the order of $P$, it
follows that $t^{2m}\big| |P|$, and so $t^{2m}\big||G|$. But this
contradicts Lemma \ref{m}, because $(2^{p-2}-1)^2$ does not
divide the order of $G$. Therefore we may assume that
$(q+2)_t=t^m$. Clearly $t\notin \pi(P)$, since $$(q+2, \
q^2)=(q+2, \ q^2-1)=(q+2, \ q^3-1)=1.$$ On the other hand, since
$t\in {\rm ppd}(2^{p-2}-1)$ and $t|2^{t-1}-1$, we deduce that
$p-2|t-1$, and so $t\geqslant p-1$. In addition, from
$(q^3-1)/(q-1)=2^p-1$, it follows that $$q(q+1)=2(2^{p-1}-1),$$
and so $q=s^f|2^{p-1}-1$, since $(q, 2)=1$. Thus $f\leqslant
p-2<p-1\leqslant t$, which implies that $t\nmid f$. By what
observed above we see that $t\notin \pi({\rm Aut}(P))$, and so
$t\in \pi(K)$. Assume now that $R\in {\rm Syl}_t(K)$. Certainly
$R\in {\rm Syl}_t(G)$, and since $K$ is nilpotent,
$R\trianglelefteq G$. Now a $(2^p-1)$-Sylow subgroup of $G$, say
$T$, acts fixed point freely on $R$ by conjugation. This shows
that the group $RT$ is a Frobenius group with kernel $R$ and
complement $T$, and so
$$2^p-1\leqslant |R|-1\leqslant 2^{p-2}-1,$$ which is a
contradiction.

\subsubitem {{\em Case} 2.} $(3, q-1)=3$. The proof goes in the
same way as previous case.

\subitem {(3.2)} \  If $P\cong A_r(q)\cong L_{r+1}(q)$, $q-1|r+1$,
then
$$|P|=m_1\times m_2=q^{r+1\choose 2}(q^{r+1}-1)\prod_{i=2}^{r-1}(q^i-1)\times \frac{q^r-1}{q-1},$$
and
$$\frac{q^r-1}{q-1}=2^p-1.$$
Subtracting 1 from both sides of this equality, we obtain
$$q(q^{r-1}-1)=2(q-1)(2^{p-1}-1).$$
If $q$ is even, then $q=2$ and $r=p$, which implies that $P\cong
L_{p+1}(2)$, as required. Therefore, we may assume that $q=s^f$,
where $s$ is an odd prime and $f\geqslant 1$ a natural number.
First of all, we have
$$q^r-1>\frac{q^r-1}{q-1}=2^p-1,$$
or equivalently $q^r>2^p$. Since $s\in \pi(G)$, we assume that
$s\in {\rm ppd}(2^k-1)$, for some $k$.  Let $|2^k-1|_s=s^m$,
where $m$ is a natural number and
$$a:=(2^k-1)(2^{2k}-1)\cdots (2^{[\frac{p-1}{k}]k}-1),$$
(Note that $k$ divides $p-1$, and so
$[\frac{p-1}{k}]=\frac{p-1}{k}$). Then, by Lemma
\ref{exp-element}, we have
$$\begin{array}{lll}a_s&=&\prod\limits_{l=1}^{\frac{p-1}{k}}(2^{kl}-1)_s=
\prod\limits_{l=1}^{\frac{p-1}{k}}l_s(2^k-1)_s=
\prod\limits_{l=1}^{\frac{p-1}{k}}l_ss^m=s^{\frac{p-1}{k}m}
\prod\limits_{l=1}^{\frac{p-1}{k}}l_s\\[0.3cm]
& =
&s^{\frac{p-1}{k}m}\left(\prod\limits_{l=1}^{\frac{p-1}{k}}l\right)_s=
s^{\frac{p-1}{k}m}((\frac{p-1}{k})!)_s= s^{\frac{p-1}{k}m}\cdot
s^{\sum_{j=1}^{\infty}[\frac{p-1}{ks^j}]}=s^{\frac{p-1}{k}m+
\sum_{j=1}^{\infty}[\frac{p-1}{ks^j}]},\end{array}$$ and since
$|G|_s=a_s$, it follows that
\begin{equation}\label{e4} s^{f\frac{r(r+1)}{2}}=|P|_s\leqslant |G|_s=
s^{\frac{p-1}{k}m+\sum_{j=1}^{\infty}[\frac{p-1}{ks^j}]}.
\end{equation}
On the other hand, we have
$$\sum_{j=1}^{\infty}\left[\frac{p-1}{ks^j}\right]\leqslant
\sum_{j=1}^{\infty}\frac{p-1}{ks^j}=\frac{p-1}{k}
\sum_{j=1}^{\infty}\frac{1}{s^j}=\frac{p-1}{k}\cdot
\frac{1}{s-1}\leqslant  \frac{p-1}{k}.
$$
If this is substituted in $(\ref{e4})$ and noting that $q^r>2^p$,
then we obtain
$$\begin{array}{lll} s^{f\frac{r(r+1)}{2}} & \leqslant &
s^{\frac{p-1}{k}(m+1)}=(s^m)^{\frac{p-1}{k}}\cdot s^{\frac{p-1}{k}}\\[0.3cm]
& < & (2^k-1)^{\frac{p-1}{k}}\cdot
(2^k-1)^{\frac{p-1}{k}}=(2^k-1)^{2\frac{p-1}{k}}
\\[0.3cm]
& < & (2^k)^{2\frac{p-1}{k}}=2^{2(p-1)}<(2^p)^2<(q^r)^2=s^{2fr},
\end{array}$$
which implies that $\frac{r(r+1)}{2}<2r$, a contradiction.

\item[{$(4)$}] {\em The simple group $P$ is isomorphic to none of the
simple groups $E_6(q)$ and $^2E_{6}(q)$, $q>2$.}

\subsubitem {(4.1)} \ If $P$ is isomorphic to $E_6(q)$, $q=s^f$,
then
$$ |P|=|E_6(q)|=m_1\times m_2=q^{36}(q^{12}-1)(q^{8}-1)(q^{6}-1)(q^{5}-1)(q^{3}-1)(q^2-1)\times \frac{q^6+q^3+1}{(3, q-1)},$$
and $$\frac{q^6+q^3+1}{(3, q-1)}=2^p-1.$$ Thus, we have
$$q^9-1>\frac{q^9-1}{q^3-1}=q^6+q^3+1=(3, q-1)\cdot (2^p-1)\geqslant 2^p-1,$$
which yields that $q^9> 2^p$. Again since $s\in \pi(G)$, we assume
that $s\in {\rm ppd}(2^k-1)$, for some $k$. Suppose
$|2^k-1|_s=s^m$, where $m$ is a natural number and
$$a:=(2^k-1)(2^{2k}-1)\cdots (2^{[\frac{p-1}{k}]k}-1).$$
Similarly to the previous case, we obtain
$$|G|_s=a_s=s^{\frac{p-1}{k}m+\sum_{j=1}^{\infty}[\frac{p-1}{ks^j}]},$$
and hence $$\begin{array}{lll} s^{36\cdot f}=|P|_s &\leqslant &
|G|_s=s^{\frac{p-1}{k}m+\sum_{j=1}^{\infty}[\frac{p-1}{ks^j}]}\\[0.3cm]
&\leqslant & s^{\frac{p-1}{k}(m+1)}=(s^m)^{\frac{p-1}{k}}\cdot
s^{\frac{p-1}{k}}\\[0.3cm]
& < & (2^k-1)^{\frac{p-1}{k}}\cdot
(2^k-1)^{\frac{p-1}{k}}=(2^k-1)^{2\frac{p-1}{k}}
\\[0.3cm]
& < & (2^k)^{2\frac{p-1}{k}}=2^{2(p-1)}<(2^p)^2<s^{18\cdot f},
\end{array}$$
which is a contradiction. \subsubitem {(4.2)} The case when
$P\cong {^2E}_{6}(q)$, $q>2$, is similar to the previous case.
\begin{ca}
$s(P)=3.$
\end{ca}
In this case we have $2^p-1\in \{m_2(P), m_3(P)\}$.
\subitem{$(1)$} $P\cong L_2(q)$, $4|q+1$. In this case
$\frac{q-1}{2}=2^p-1$ or $q=2^p-1$. The first case is obviously
impossible, since we obtain $q=2^{p+1}-1$ which must divides
$|G|$. For the latter case, we first notice that $q$ is a
Mersenne prime and $$|P|=|L_2(q)|=\frac{1}{(2,
q-1)}q(q^2-1)=2^p(2^{p-1}-1)(2^p-1).$$ Moreover, since
$P\leqslant G/K \leqslant {\rm Aut}(P)$ and $|{\rm Aut}(P):P|=2$
we deduce that $2^{p-2}-1$ divides $|K|$. Let $r\in {\rm
ppd}(2^{p-2}-1)$. Now we consider the Sylow $r$-subgroup $R$ of
$K$. Evidently $R\in {\rm Syl}_r(G)$ and $R\lhd G$ because $K$ is
a nilpotent subgroup. Now if $Q\in {\rm Syl}_q(G)$, then $Q$ acts
on $R$ by conjugation and this action is fixed point free. Hence
$RQ$ is a Frobenius group with kernel $R$ and complement $Q$, and
we must have $$q=2^p-1\leqslant|R|-1\leqslant 2^{p-2}-1,$$ which
is a contradiction.

\subitem{$(2)$} $P\cong L_2(q)$, $4|q-1$. In this case we must
have $q=2^p-1$ or $\frac{q+1}{2}=2^p-1$. The first case is
obviously impossible, because $q-1=2(2^{p-1}-1)$ and so $4\nmid
q-1$. If $\frac{q+1}{2}=2^p-1$, then $q=2^{p+1}-3$, and so
$$|P|=|L_2(q)|=\frac{1}{(2, q-1)}q(q^2-1)=2^2(2^{p-1}-1)(2^p-1)(2^{p+1}-3).$$
Let $q=2^{p+1}-3=s^f$, where $s$ is a prime number. Evidently
$s\geqslant 5$, and so
$$2^{p+1}\geqslant 2^{p+1}-3=s^f\geqslant 5^f\geqslant 2^{2f},$$
which forces $f\leqslant \frac{p+1}{2}$. Moreover, since $$|{\rm
Out}(P)|=|{\rm Aut}(P):P|=(2, q-1)\cdot f=2\cdot f,$$ it follows
that  $$|{\rm Aut}(P)|=2^3(2^{p-1}-1)(2^p-1)(2^{p+1}-3)\cdot f.$$
Let $r\in {\rm ppd}(2^{p-2}-1)\subset \pi(G)$. Now, we claim that
$(r, |{\rm Aut}(P)|)=1$. Indeed, on the one hand, we have
$$\big(2^{p-2}-1, \ \  2^3(2^{p-1}-1)(2^{p}-1)(2^{p+1}-3)\big)=1,$$
whose validity is verified by direct computations. On the other
hand, since $r|2^{r-1}-1$, we deduce that $p-2|r-1$, and so
$r\geqslant p-1$. Combining this with the inequality $f\leqslant
\frac{p+1}{2}$, we obtain
$$f\leqslant \frac{p+1}{2}<p-1\leqslant r,$$
which yields that $(r, f)=1$. This completes the proof of our
claim.

Therefore, from $(r, |{\rm Aut}(P)|)=1$, it follows that $r\in
\pi(K)$. As previous case, we consider the Sylow $r$-subgroup $R$
of $K$, which is also the normal Sylow $r$-subgroup of $G$. Now a
$(2^p-1)$-Sylow subgroup of $G$, say $Q$, acts fixed point freely
on $R$ by conjugation. This shows that the group $RQ$ is a
Frobenius group with kernel $R$ and complement $Q$, and so
$$2^{p}-1\leqslant|R|-1\leqslant 2^{p-2}-1,$$ which is a
contradiction.

\subitem{$(3)$} $P\cong L_2(q)$, $4|q$. Here, we must have
$q-1=2^p-1$ or $q+1=2^p-1$. In the first case, we obtain
$q+1=2^p+1\big | |G|$, an impossible by Theorem \ref{zsig}. In
the second case, we get $q=2(2^{p-1}-1)$, which is again a
contradiction.

\subitem{$(4)$} $P\cong G_2(q)$, $3|q$. In this case
$q^2-q+1=2^p-1$ or $q^2+q+1=2^p-1$. Now by easy calculate in both
cases we obtain that $(2^{p-1}-1)^2\big | |G|$, which is a
contradiction by Lemma \ref{m}.

\subitem{$(5)$} $P\cong {^2G_2(q)}$, $q=3^{2n+1}$. In this case,
we have
$$3^{2n+1}-3^{n+1}+1=2^p-1 \ \ \ \mbox{or} \ \ \
3^{2n+1}+3^{n+1}+1=2^p-1.$$ Assume $3^{2n+1}-3^{n+1}+1=2^p-1$.
Now we easily deduce that
$$2(2^{\frac{p-1}{2}}-1)(2^{\frac{p-1}{2}}+1)=3^{n+1}(3^n-1).$$ If
$3^{n+1}$ divides $2^{\frac{p-1}{2}}-1$, then
$$3^n-1<3^{n+1}\leqslant 2^{\frac{p-1}{2}}-1<
2^{\frac{p-1}{2}}+1.$$ Hence, we obtain
$$3^{n+1}(3^n-1)<2(2^{\frac{p-1}{2}}-1)(2^{\frac{p-1}{2}}+1),$$
which is a contradiction. Assume now that $3^{n+1}$ divides
$2^{\frac{p-1}{2}}+1$. Then $2^{\frac{p-1}{2}}+1=k(3^{n+1})$, for
some $k$, and so $3^{n+1}\leqslant 2^{\frac{p-1}{2}}+1$. On the
other hand, we observe that $2k(2^{\frac{p-1}{2}}-1)=3^n-1$, and
hence $3^n-1\geqslant 2(2^{\frac{p-1}{2}}-1)$, i.e., $3^n\geqslant
2^{\frac{p+1}{2}}-1$. Therefore, we have
$$2^{\frac{p+1}{2}}-1\leqslant 3^n<3^{n+1}\leqslant
2^{\frac{p-1}{2}}+1,$$ which is a contradiction. For other case
the discussion is similar.

\subitem{$(6)$} $P\cong {^2D_r(3)}$, $r=2^n+1\geqslant 3$. For
this case, we have $\frac{3^r+1}{4}=2^p-1$ or
$\frac{3^{r-1}+1}{2}=2^p-1$. In the first case, we obtain
$2^2(2^p+1)=3^2(3^{r-2}+1)$. Now, we consider a primitive prime
divisor $r\in {\rm ppd}(2^{2p}-1)$. Then $r\in \pi(2^p+1)\subset
\pi(P)$ and $r\notin \pi(G)$, which is a contradiction. In the
second case, we get $2^{p+1}=3(3^{r-2}+1)$, which is a
contradiction.

\subitem{$(7)$} $P\cong F_4(q)$, $2|q$. In this case we must have
$$q^4+1=2^p-1 \ \ \ \mbox{or} \ \ \  q^4-q^2+1=2^p-1.$$ The first
case obviously is impossible. In the latter case, we deduce
$$q^2(q^2-1)=2(2^{p-1}-1),$$ and so $(2^{p-1}-1)^2$ divides $|G|$,
which is a contradiction.

\subitem{$(8)$} $P\cong {^2F_4(q)}$, $q=2^{2m+1}>2$. Then
$$2^{2(2m+1)}-2^{3m+2}+2^{2m+1}-2^{m+1}+1=2^p-1,$$ or
$$2^{2(2m+1)}+2^{3m+2}+2^{2m+1}+2^{m+1}+1=2^p-1.$$ Now, it is not
difficult to see that any of equalities cannot hold.

\subitem{$(9)$} If $P\cong {^2A_5(2)}$ or $E_7(3)$, then
$2^p-1=7$, 11, 757 or 1093, which is a contradiction. If $P\cong
E_7(2)$ then $2^p-1=127$ and $p=7$. But then we must have $43\big
| |G|$ which is a contradiction.
\end{itemize}
\begin{ca}
$s(P)=4, 5.$
\end{ca}

In this case we have $2^p-1\in \{m_2(P), m_3(P), m_4(P),
m_5(P)\}$.

\begin{itemize}
\item[{$(1)$}] The cases $P \cong  A_2(4)$, ${^2E_6(2)}$ are clearly
impossible.

\item[{$(2)$}] If $P \cong {^2B_2(2^{2m+1}})$, $m\geqslant 1$, and $2^{2m+1} \pm
2^{m+1} +1= 2^p -1$, then $m=0$, against the fact $m \ge 1$. In
the case when $2^{2m+1}-1=2^p-1$, it follows that $p=2m+1$ and we
obtain $2^{2p}+1\big | |G|$, which is a contradiction.

\item[{$(3)$}] If $P\cong E_8(q)$, then $2^p -1$ is one of the following:

\subitem{$(i)$} $q^8-q^7+q^5-q^4+q^3-q+1$. This implies that
$$2(2^{p-1}-1)=q(q-1)(q+1)(q^5-q^4+q^3+1),$$ which contradicts the
fact that $8$ divides $(q^2-1)$, if $q$ is odd. If $q$ is even,
then $q=2$ also gives a contradiction.

\subitem{$(ii)$} $q^8+q^7-q^5-q^4-q^3+q+1$. This implies that
$$2(2^{p-1} -1) = q(q-1)(q+1)(q^5 +q^4 +q^3 -1),$$ which
contradicts the fact that $8$ divides $(q^2-1)$, if $q$ is odd.
If $q$ is even, then $q=2$ also gives a contradiction.

\subitem{$(iii)$} $q^8-q^6+q^4-q^2+1$. This implies that
$$2(2^{p-1}-1)=q^2(q-1)(q+1)(q^4+q^2-1),$$ which contradicts the
fact that $8$ divides $(q^2-1)$, if $q$ is odd. If $q$ is even,
then $q^2=2$ also gives a contradiction.

\subitem{$(iv)$} $q^8-q^4 +1$. This implies that
$2(2^{p-1}-1)=q^4(q^4-1)$, which also gives a contradiction.
\end{itemize}
The proof of this lemma is complete. \end{proof}

\begin{lm}\label{automL} $G$ is isomorphic to ${\rm Aut}(L)$.
\end{lm}
\begin{proof} By Lemma \ref{isomorL}, $P$ is isomorphic to $L$, and so
$L\leqslant G/K\leqslant {\rm Aut}(L)$. Since $|{\rm
Out}(L))|=2$, $G/K\cong L$ or $G/K\cong {\rm Aut}(L)$. In the
first case, $|K|=2$ and so $K\leqslant Z(G)$ which forces $G$
possesses an element of order $2\cdot (2^p-1)$, a contradiction.
In the later case, one can easily deduce that $K=1$ and $G\cong
{\rm Aut}(L)$, as required.
\end{proof}

The proof of the theorem is complete.
\end{proof}
\section{Appendix}
In a series of papers, it was shown that many finite simple groups
are OD-characterizable or 2-fold OD-characterizable. Table 4 lists
finite simple groups which are currently known to be $k$-fold
OD-characterizable for $k\in \{1, 2\}$.

\begin{center}
{\bf Table 4}. Some non-abelian simple groups $S$ with $h_{\rm
OD}(S)=1$ or $2$.\\[0.4cm]
$\begin{array}{l|l|c|l} \hline S & {\rm Conditions \ on} \ S&
h_{\rm OD} & {\rm Refs.} \\ \hline
 \mathbb{A}_n & \ n=p, p+1, p+2 \ (p \ {\rm a \ prime})& 1 &  \cite{mz}, \cite{mzd}    \\
 & \ 5\leqslant n\leqslant 100, n\neq 10   & 1 & \cite{hm}, \cite{kogani}, \cite{banoo-alireza},  \\
 & & & \cite{mz3}, \cite{zs-new2} \\
& \ n=106, \ 112 & 1 &      \cite{yan-chen}       \\
& \ n=10 & 2 &      \cite{mz2}       \\[0.2cm]
L_2(q) &  q\neq 2, 3& 1 &    \cite{mz}, \cite{mzd},\\
& & &  \cite{zshi} \\[0.1cm]
L_3(q) &  \ |\pi(\frac{q^2+q+1}{d})|=1, \ d=(3, q-1) & 1 &   \cite{mzd} \\[0.2cm]
U_3(q) &  \ |\pi(\frac{q^2-q+1}{d})|=1, \ d=(3, q+1), q>5 & 1 &   \cite{mzd} \\[0.2cm]
L_4(q) &  \ q\leqslant 17  & 1 &   \cite{bakbari, amr} \\[0.1cm]
L_3(9) & & 1 & \cite{zs-new4}\\[0.1cm]
U_3(5) &   & 1 &   \cite{zs-new5} \\[0.1cm]
U_4(7) &   & 1 &   \cite{amr} \\[0.1cm]
L_n(2) & \ n=p \ {\rm or} \ p+1, \ {\rm for \ which} \ 2^p-1 \ {\rm is \ a \ prime} & 1 & \cite{amr} \\[0.2cm]
L_9(2) &   & 1 &   \cite{khoshravi} \\[0.1cm]
R(q) & \ |\pi(q\pm \sqrt{3q}+1)|=1, \ q=3^{2m+1}, \ m\geqslant 1 & 1 & \cite{mzd} \\[0.2cm]
{\rm Sz} (q) & \ q=2^{2n+1}\geqslant 8& 1 &   \cite{mz}, \cite{mzd} \\[0.2cm]
B_m(q), C_m(q) &  m=2^f\geqslant 4,  \
|\pi\big((q^m+1)/2\big)|=1, \
 & 2 & \cite{akbari-moghadam}\\[0.2cm]
B_2(q)\cong C_2(q) &  \ |\pi\big((q^2+1)/2\big)|=1, \ q\neq
3 & 1 & \cite{akbari-moghadam}\\[0.2cm]
B_m(q)\cong C_m(q) &  m=2^f\geqslant 2, \ 2|q, \ |\pi\big(q^m+1\big)|=1, \ (m, q)\neq (2, 2) & 1 &
\cite{akbari-moghadam}\\[0.2cm]
B_p(3), C_p(3) &  |\pi\big((3^p-1)/2\big)|=1, \  p \ {\rm is \ an
\ odd \
prime}  & 2 & \cite{akbari-moghadam}, \cite{mzd}\\[0.2cm]
B_3(5), C_3(5) & & 2 & \cite{akbari-moghadam} \\[0.2cm]
C_3(4) & & 1 & \cite{moghadam} \\[0.1cm]
S &  \ \mbox{A sporadic simple group} & 1 & \cite{mzd} \\[0.1cm]
S &  \ \mbox{A simple group with} \ |\pi(S)|=4, \ \ S\neq \mathbb{A}_{10} & 1 & \cite{zs} \\[0.1cm]
S &  \  \mbox{A simple group with} \ |S|\leqslant 10^8, \ \ S\neq \mathbb{A}_{10}, \ U_4(2) & 1 & \cite{ls} \\[0.1cm]
S &  \  \mbox{A simple $C_{2,2}$- group} & 1 & \cite{mz}
\end{array}$
\end{center}
\footnotetext{In Table 4, $q$ is a power of a prime number.}

Although we have not found a simple group which is $k$-fold
OD-characterizable for $k\geqslant 3$, but among non-simple
groups, there are many groups which are $k$-fold
OD-characterizable for $k\geqslant 3$. As an easy example, if $P$
is a $p$-group of order $p^n$, then $h_{\rm OD}(P)=\nu(p^n)$. In
connection with such groups, Table 5 lists finite non-solvable
groups which are currently known to be OD-characterizable or
$k$-fold OD-characterizable with $k\geqslant 2$.

\begin{center}
{\bf Table 5}. Some non-solvable groups $G$ with certain $h_{\rm
OD}(G)$.\\[0.4cm]
$\begin{array}{l|l|c|l} \hline G & {\rm Conditions \ on} \ G &
h_{\rm OD}(G) & {\rm Refs.} \\ \hline
{\rm Aut}(M) & M \ \mbox{is a sporadic group}  \neq  J_2, M^cL    & 1 & \cite{mz} \\[0.1cm]
\mathbb{S}_n & n=p, \ p+1 \ (p\geqslant 5 \ \mbox{is a prime})& 1 &  \cite{mz}    \\[0.1cm]
M &  M\in \mathcal{C}_1 & 2 &      \cite{mz2}       \\[0.1cm]
M & M\in \mathcal{C}_2 & 2 &  \cite{mzd} \\[0.1cm]
M & M\in \mathcal{C}_3 & 8 &      \cite{mz2}       \\[0.1cm]
M & M\in \mathcal{C}_4  & 3 & \cite{hm, kogani, banoo-alireza, mz3, yan-chen} \\[0.1cm]
M & M\in \mathcal{C}_5     & 2 & \cite{mz2} \\[0.1cm]
M & M\in \mathcal{C}_6    & 3 & \cite{mz2} \\[0.1cm]
M & M\in \mathcal{C}_7  & 6 &\cite{banoo-alireza} \\[0.1cm]
M & M\in \mathcal{C}_8 & 1 &  \cite{zs-new1} \\[0.1cm]
M & M\in \mathcal{C}_9  & 9 &\cite{zs-new1} \\[0.1cm]
M & M\in \mathcal{C}_{10}  & 1 &\cite{zs-new5} \\[0.1cm]
M & M\in \mathcal{C}_{11} & 3 &  \cite{zs-new5} \\[0.1cm]
M & M\in \mathcal{C}_{12}  & 6 &\cite{zs-new5}\\[0.1cm]
M & M\in \mathcal{C}_{13}  & 1 &\cite{y-chen-w}
\end{array}$
\end{center}
\begin{tabular}{lll}
$\mathcal{C}_1$ & $\!\!\!\!=$  & $ \!\!\!\! \{\mathbb{A}_{10}, J_2\times \mathbb{Z}_{3} \}$\\[0.1cm]
$\mathcal{C}_2$ & $\!\!\!\!=$ & $\!\!\!\! \{S_6(3) , O_7(3) \}$\\[0.1cm]
$\mathcal{C}_3$ & $\!\!\!\!=$ & $\!\!\!\! \{ \mathbb{S}_{10},  \
\mathbb{Z}_{2}\times \mathbb{A}_{10}, \ \mathbb{Z}_2\cdot
\mathbb{A}_{10}, \ \mathbb{Z}_6\times J_2,  \ \mathbb{S}_3 \times
J_2, \ \mathbb{Z}_3\times (\mathbb{Z}_2\cdot J_2),
$ \\[0.1cm]
& & $(\mathbb{Z}_3\times J_2)\cdot \mathbb{Z}_2, \ \mathbb{Z}_3\times {\rm Aut}(J_2)\}.$\\[0.1cm]
$ \mathcal{C}_4$ & $\!\!\!\!=$ & $\!\!\!\! \{\mathbb{S}_{n}, \
\mathbb{Z}_{2}\cdot \mathbb{A}_{n}, \ \mathbb{Z}_{2}\times
\mathbb{A}_{n}\}$, \
where $9\leqslant n\leqslant 100$ with $n\neq 10, p, p+1$ ($p$ a prime)\\[0.1cm]
& & or $n=106, \ 112$.\\[0.1cm]
$\mathcal{C}_5$ & $\!\!\!\!=$ & $\!\!\!\! \{ {\rm Aut}(M^cL), \ \mathbb{Z}_2\times M^cL\}$.\\[0.1cm]
$\mathcal{C}_6$ & $\!\!\!\!=$ & $\!\!\!\! \{{\rm Aut}(J_2), \
\mathbb{Z}_2\times J_2, \
\mathbb{Z}_2\cdot J_2\}.$\\[0.1cm]
$\mathcal{C}_7$ & $\!\!\!\!=$ & $\!\!\!\! \{{\rm Aut}(S_6(3)), \
\mathbb{Z}_2\times S_6(3), \  \mathbb{Z}_2\cdot S_6(3),  \
\mathbb{Z}_2\times O_7(3), \
\mathbb{Z}_2\cdot O_7(3)$, \ ${\rm Aut}(O_7(3))\}$.\\[0.1cm]
$\mathcal{C}_8$ & $\!\!\!\!=$ & $\!\!\!\! \{L_2(49):2_1, \
L_2(49):2_2, \
L_2(49):2_3\}$.  \\[0.1cm]
$\mathcal{C}_9$ & $\!\!\!\!=$ & $\!\!\!\! \{L\cdot 2^2,  \
\mathbb{Z}_{2}\times (L:2_1), \ \mathbb{Z}_2\times (L:2_2), \
\mathbb{Z}_2\times (L\cdot 2_3),  \ \mathbb{Z}_2\cdot(L:2_1), $
\\[0.1cm]
& & $\mathbb{Z}_2\cdot(L:2_2), \ \mathbb{Z}_2\cdot(L\cdot2_3), \
\mathbb{Z}_4\times L, \ (\mathbb{Z}_2\times \mathbb{Z}_2)\times
L\}$, \ where $L=L_2(49)$.\\[0.1cm]
$\mathcal{C}_{10}$ & $\!\!\!\!=$  & $ \!\!\!\! \{U_3(5), \ U_3(5):2\}$\\[0.1cm]
$\mathcal{C}_{11}$ & $\!\!\!\!=$  & $ \!\!\!\! \{U_3(5):3, \
\mathbb{Z}_{3} \times U_3(5),
 \ \mathbb{Z}_{3}\cdot U_3(5) \}$\\[0.1cm]
$\mathcal{C}_{12}$ & $\!\!\!\!=$  & $ \!\!\!\! \{L:\mathbb{S}_3, \
\mathbb{Z}_{2}\cdot(L:3), \ \mathbb{Z}_{3}\times (L:2), \
\mathbb{Z}_{3}\cdot(L:2), \ (\mathbb{Z}_{2}\times L)\cdot
\mathbb{Z}_{2}$,
\\[0.1cm]
& & \ $(\mathbb{Z}_{3}\cdot L)\cdot \mathbb{Z}_{2}\}$, where
$L=U_3(5)$.\\[0.2cm]
$\mathcal{C}_{13}$ & $\!\!\!\!=$  & $ \!\!\!\! \{{\rm
Aut}(O^+_{10}(2), {\rm Aut}(O^-_{10}(2)\}$,
\\[0.1cm]
\end{tabular}
\newpage
\begin{center}
{\large \bf Acknowledgement}
\end{center}
The first author would like to thank the Research Institute for
Fundamental Sciences (RIFS) for the financial support.

\end{document}